\documentclass[11pt]{amsart}

\usepackage{graphicx}
\usepackage{mathptmx}

\usepackage{amscd}
\usepackage{amsxtra}
\usepackage{a4wide}
\usepackage{latexsym}
\usepackage{amssymb}
\usepackage{amsfonts}
\usepackage{amsmath}
\usepackage{amsrefs}
\usepackage{mathrsfs}
\usepackage{upref}
\usepackage{txfonts}
\usepackage{color}
\usepackage{tcolorbox}
\usepackage[utf8]{inputenc}
%\usepackage{boondox-cal}
%\usepackage{showframe}
% % % % % % % % % % % % % % % % % % % % % % % % % %
% % % % % % % % % % % % % % % % % % % % % % % % % %

\usepackage[bookmarksnumbered, colorlinks, plainpages]{hyperref}
\hypersetup{colorlinks=true,linkcolor=red, anchorcolor=green,
citecolor=cyan, urlcolor=red, filecolor=magenta, pdftoolbar=true}

\usepackage[left=2cm, right=2cm, top=2cm, bottom=2cm]{geometry}

% % % % % % % % % % % % % % % % % % % % % % % % %

% % % % % % % % % % % % % % % % % % % % % % % % %
% % % % % % % % % % % % % % % % % % % % % % % % %

\allowdisplaybreaks

% % % % % % % % % % % % % % % % % % % % % % % % %
% % % % % % % % % % % % % % % % % % % % % % % % %
% % % % % % % % % % % % % % % % % % % % % % % % % % %
% % % % % % % % % % % % % % % % % % % % % % % % % %

\newtheorem{theorem}{Theorem}[section]
\newtheorem{lemma}[theorem]{Lemma}

\theoremstyle{definition}

\theoremstyle{remark}
\newtheorem{remark}[theorem]{Remark}
\numberwithin{equation}{section}

%%%%%%%%%%%%%%%%%%%%%%%%%%%%%%%%%%%%%%%%%%%%%%%%%%%%

\def\GG{\operatorname{G\Gamma}}

\def\esup{\operatornamewithlimits{ess\,sup}}

\def\R{\mathbb R}

\def\ap{\approx}

\def\mf{\mathfrak M}
\def\rad{\operatorname{rad}}

\def\qq{\qquad}
\def\rn{\R^n}

\def\la{\lambda}

\def\vp{\varphi}

\def\rw{\rightarrow}

\def\dn{\downarrow}

\def\R{\mathbb R}

\def\M{\mathfrak M}

\def\mp{{\mathfrak M}}
\def\W{{\mathcal W}}

%\def\u{\mu}

%%%%%%%%%%%%%%%%%%%%%%%%%%%%%%%%%%%%%%%%%%%%%%%%%%%%%%%%%%%%%%%%%%%
%%%%%%%%%%%%%%%%%%%%%%%%%%%%%%%%%%%%%%%%%%%%%%%%%%%%%%%%%%%%%%%%%%%

\begin{document}

\setcounter{page}{1}

\title[Norms of maximal functions between generalized and classical Lorentz spaces]{Norms of maximal functions between generalized \\ and classical Lorentz spaces}

\author[R.Ch. Mustafayev]{RZA MUSTAFAYEV}
\address{RZA MUSTAFAYEV, Department of Mathematics, Kamil \"{O}zda\u{g} Faculty of Science, Karamano\u{g}lu Mehmetbey University, 70200, Karaman, Turkey}
\email{rzamustafayev@gmail.com}

\author[N. B\.{I}LG\.{I}\c{C}L\.{I}]{NEV\.{I}N B\.{I}LG\.{I}\c{C}L\.{I}}
\address{NEV\.{I}N B\.{I}LG\.{I}\c{C}L\.{I}, Republic of Turkey Ministry of National Education, Kirikkale High School, 71100, Kirikkale, Turkey}
\email{nevinbilgicli@gmail.com}

\author[M. Y\i lmaz]{MERVE YILMAZ}
\address{MERVE YILMAZ, Department of Mathematics, Kamil \"{O}zda\u{g} Faculty of Science, Karamano\u{g}lu Mehmetbey University, 70200, Karaman, Turkey}
\email{mervegorgulu@kmu.edu.tr}

\subjclass[2010]{42B25, 42B35}

\keywords{generalized maximal functions, classical and generalized Lorentz spaces,
	iterated Hardy inequalities involving suprema, weights}

\begin{abstract}
	The aim of the paper is to calculate the norm of the generalized maximal operator $M_{\phi,\Lambda^{\alpha}(b)}$,
	defined with $0 < \alpha < \infty$ and functions $b,\,\phi: (0,\infty) \rightarrow (0,\infty)$ for all measurable functions $f$ on ${\mathbb R}^n$ by
	\begin{equation*}
	M_{\phi,\Lambda^{\alpha}(b)}f(x) : = \sup_{Q \ni x} \frac{\|f \chi_Q\|_{\Lambda^{\alpha}(b)}}{\phi (|Q|)}, \qquad x \in \rn,
	\end{equation*}
	from $\GG(p,m,v)$ into $\Lambda^q(w)$. Here $\Lambda^{\alpha}(b)$ and $\GG(p,m,w)$ are the classical and generalized Lorentz spaces, defined as a set of all measurable functions $f$ defined on $\rn$ for which 
	$$
	\|f\|_{\Lambda^{\alpha}(b)} = \bigg( \int_0^{\infty} [f^*(s)]^{\alpha} b(s)\,ds \bigg)^{\frac{1}{\alpha}} < \infty
	\quad \mbox{and} \quad \|f\|_{\GG(p,m,w)} = \bigg( \int_0^{\infty} \bigg( \int_0^x [f^* (\tau)]^p\,d\tau \bigg)^{\frac{m}{p}} v(x)\,dx \bigg)^{\frac{1}{m}} < \infty,
	$$
	respectively.
	
	In order to achieve the goal we reduce the problem to the solution of the inequality
	\begin{equation*}
		\bigg( \int_0^{\infty} \big[ T_{u,b}f^* (x)\big]^q \, w(x)\,dx\bigg)^{\frac{1}{q}} \le C \, \bigg( \int_0^{\infty} \bigg( \int_0^x [f^* (\tau)]^p\,d\tau \bigg)^{\frac{m}{p}} v(x)\,dx \bigg)^{\frac{1}{m}}
	\end{equation*}
	where $w$ and $v$ are weight functions on  $(0,\infty)$. The inequality is required to hold with some positive constant $C$ for all measurable functions defined on measure space $({\mathbb R}^n,dx)$. Here $f^*$ is the non-increasing rearrangement of a measurable function $f$ defined on $\rn$ and $T_{u,b}$ is the iterated Hardy-type operator involving suprema, which is defined for a measurable non-negative function $f$ on $(0,\infty)$ by 
	$$
	(T_{u,b} g)(t) : = \sup_{\tau \in [t,\infty)} \frac{u(\tau)}{B(\tau)} \int_0^{\tau} g(s)b(s)\,ds,\qquad t \in (0,\infty),
	$$ 
	where $u$ and $b$ are two weight functions on $(0,\infty)$ such that $u$ is continuous on $(0,\infty)$ and the function 
	$B(t) : = \int_0^t b(s)\,ds$ satisfies  $0 < B(t) < \infty$ for every $t \in (0,\infty)$.

\end{abstract}

\maketitle

%%%%%%%%%%%%%%%%%%%%%%%%%%%%%%%%%%%%%%%%%%%%%%%%%%%%%%%%%%%%%%%%%%%%%%%%%%%%%%%%%%%%%
%%%%%%%%%%%%%%%%%%%%%%%%%%%%%%%%%%%%%%%%%%%%%%%%%%%%%%%%%%%%%%%%%%%%%%%%%%%%%%%%%%%%%%

\section{Introduction}\label{in}

Let $({\mathcal R}, \mu)$ be a $\sigma$-finite non-atomic measure space. Denote by
$\mf({\mathcal R})$ the set of all $\mu$-measurable functions on ${\mathcal R}$
and $\mf_0 ({\mathcal R})$ the class of functions in $\mf ({\mathcal R})$ that
are finite $\mu$ - a.e. on ${\mathcal R}$. The symbol $\mp^+ ({\mathcal R})$ stands for the
collection of all $f\in\mp ({\mathcal R})$ which are non-negative on
${\mathcal R}$. 

The non-increasing rearrangement $f^*$ of $f \in \mp_0({\mathcal R})$ is given by
$$
f^* (t) = \inf \big\{ \lambda \ge 0:\, \mu (\{x \in {\mathcal R}:\,|f(x)| > \lambda\}) \le t \big\}, \qquad t \in [0,\mu({\mathcal R})).
$$
The maximal non-increasing rearrangement of $f$ is defined as follows 
\begin{equation}\label{max.func.decr.rear.}
	f^{**}(t) : = \frac{1}{t} \int_0^t f^* (\tau)\,d\tau, \qquad t \in (0,\mu({\mathcal R})).
\end{equation}

Most of the functions which we shall deal with will be defined on ${\mathbb R}^n$ or $(0,\infty)$. In this case, $({\mathcal R}, \mu)$ is ${\mathbb R}^n$ or $(0,\infty)$ endowed with the $n$-dimensional Lebesgue measure or the one-dimensional Lebesgue measure, respectively. We shall write just $\mp^+$ instead of $\mp^+ (0,\infty)$.

Let $\Omega$ be any measurable subset of $\rn$, $n\geq 1$. The family of all weight functions (also called just weights) on $\Omega$, that is, locally integrable non-negative functions on $\Omega$, denoted by $\W(\Omega)$.

For $p\in (0,\infty]$ and $w\in \mp^+(\Omega)$, we define the functional
$\|\cdot\|_{p,w,\Omega}$ on $\mp (\Omega)$ by
\begin{equation*}
	\|f\|_{p,w,\Omega} : = \left\{\begin{array}{cl}
		\bigg(\int_{\Omega} |f(x)|^p w(x)\,dx \bigg)^{1/p} & \qq\mbox{if}\qq p<\infty, \\
		\esup_{\Omega} |f(x)|w(x) & \qq\mbox{if}\qq p=\infty.
	\end{array}
	\right.
\end{equation*}

If, in addition, $w\in \W(\Omega)$, then the weighted Lebesgue space
$L^p(w,\Omega)$ is given by
\begin{equation*}
	L^p(w,\Omega) = \{f\in \mp (\Omega):\,\, \|f\|_{p,w,\Omega} <
	\infty\}
\end{equation*}
and it is equipped with the quasi-norm $\|\cdot\|_{p,w,\Omega}$.

When $w\equiv 1$ on $\Omega$, we write simply $L^p(\Omega)$ and
$\|\cdot\|_{p,\Omega}$ instead of $L^p(w,\Omega)$ and
$\|\cdot\|_{p,w,\Omega}$, respectively.

Quite many familiar function spaces can be defined using the
non-increasing rearrangement of a function. One of the most
important classes of such spaces are the so-called classical Lorentz
spaces.

Let $p \in (0,\infty)$ and $w \in {\mathcal W}(0,\mu({\mathcal R}))$. Then the
classical Lorentz spaces $\Lambda^p (w)$ and $\Gamma^p (w)$ consist
of all functions $f \in {\mathfrak M}({\mathcal R})$ for which
$$
\|f\|_{\Lambda^p(w)} : = \bigg( \int_0^{\mu({\mathcal R})} [f^*(s)]^p w(s)\,ds \bigg)^{\frac{1}{p}} < \infty
\quad \mbox{and} \quad \|f\|_{\Gamma^p(w)} : = \bigg( \int_0^{\mu({\mathcal R})} [f^{**}(s)]^p w(s)\,ds \bigg)^{\frac{1}{p}} < \infty,
$$
respectively. For more information about the Lorentz $\Lambda$ and $\Gamma$ see
e.g. \cite{cpss} and the references therein.

The study of particular problems in the regularity theory of PDE's led to the definition of spaces involving inner integral means involving powers of the non-increasing rearrangements of functions.
The generalized Lorentz $\GG(p,m,v)({\mathcal R,\mu})$ space (denoted simply by $\GG(p,m,v)$), introduced and studied in \cite{FR2008} and \cite{FRZ2009}, is defined as the collection of all $g \in \mp (\mathcal R)$ such that
$$
\|g\|_{\GG(p,m,v)} = \bigg( \int_0^{\mu({\mathcal R})} \bigg( \int_0^x [g^* (\tau)]^p\,d\tau \bigg)^{\frac{m}{p}} v(x)\,dx \bigg)^{\frac{1}{m}} < \infty,
$$
where $m,\,p \in (0,\infty)$, $v \in \W(0,\mu({\mathcal R}))$.

The spaces $\GG(p,m,v)$ cover several types of important function spaces and have plenty of applications. For example, when $\mu({\mathcal R}) = \infty$, $p = 1$, $m > 1$ and $v(t) = t^{-m}w(t)$, $t \in (0,\infty)$, where $w$ is another weight on $(0,\infty)$, then $\GG(p,m,v)$ reduces the spaces $\Gamma^m(w)$. Another important example is obtained when $\mu({\mathcal R}) = 1$, $m = 1$, $p \in (1,\infty)$ and $v(t) = t^{-1}\big( \log ({2} / {t})\big)^{-1/p}$ for $t \in (0,1)$. In this case the space $\GG(p,m,v)$ coincides with the small Lebesgue space, which was originally studied by Fiorenza in \cite{F2000}. In the same paper it was proved that this space is the associate space of the grand Lebesgue space, which is introduced in \cite{IS} in connection with integrability properties of Jacobians. Subsequently, Fiorenza and Karadzhov in \cite{FK} derived an equivalent form of the norm in the small Lebesgue space written in the form of the norm in the $\GG(p,m,v)$ space with the above mentioned parameters and weight. Recently, the connection of the $\GG(p,m,v)$ space with some well-known function spaces have been studied in \cite{AFFGR}. In the present paper we take $(\rn,dx)$ as underlying measure space and use the notation $\GG(p,m,v)$ for $\GG(p,m,v)(\rn,dx)$.  

The study on maximal operators occupies an important place in harmonic analysis. These significant non-linear operators, whose
behavior are very informative in particular in differentiation
theory, provided the understanding and the inspiration for the
development of the general class of singular and potential operators
(see, for instance, \cites{stein1970,guz1975,GR,tor1986,stein1993,graf2008,graf}).

The main example is the Hardy-Littlewood maximal function which is
defined  for locally integrable functions $f$ on $\rn$ by
$$
(Mf)(x) : = \sup_{Q \ni x}\frac{1}{|Q|} \int_Q |f(y)|\,dy, \qquad x \in \rn,
$$
where the supremum is taken over all cubes $Q$ containing $x$. By a cube, we mean an open cube with sides parallel to the coordinate axes.

Another important example is the fractional maximal operator, $M_{\gamma}$, $\gamma \in (0,n)$,
defined  for locally integrable functions $f$ on $\rn$ by
$$
(M_{\gamma} f) (x) := \sup_{Q \ni x} |Q|^{ \gamma / n - 1} \int_{Q}
|f(y)|\,dy,\qquad x \in \rn.
$$

One more example is the fractional maximal operator $M_{s,\gamma,\mathbb A}$ defined in \cite{edop} for all measurable functions $f$ on $\rn$ by
$$
(M_{s,\gamma,\mathbb A} f) (x) : = \sup_{Q \ni x}
\frac{\|f\chi_Q\|_s}{ \|\chi_Q\|_{sn / (n-\gamma),{\mathbb A}}},
\qquad x \in \rn.
$$
Here $s \in (0,\infty)$, $\gamma \in [0,n)$, $\mathbb A = (A_0,A_{\infty}) \in {\mathbb R}^2$ and
$$
\ell^{\mathbb A} (t) : = (1 + |\log t|)^{A_0} \chi_{[0,1]}(t) + (1 +
|\log t|)^{A_{\infty}} \chi_{[1,\infty)}(t),  \qquad t \in (0,\infty).
$$
Recall that the following equivalency holds:
$$
(M_{s,\gamma,\mathbb A} f) (x) \ap \sup_{Q \ni x}
\frac{\|f\chi_Q\|_s}{ |Q|^{(n-\gamma)/(sn)}\ell^{\mathbb A}(|Q|)},
\quad x \in \rn.
$$

Hence, if $s = 1$, $\gamma = 0$ and ${\mathbb A} = (0,0)$, then
$M_{s,\gamma,\mathbb A}$ is equivalent to $M$. If $s = 1$, $\gamma \in
(0,n)$ and ${\mathbb A} = (0,0)$, then $M_{s,\gamma,\mathbb A}$ is
equivalent to $M_{\gamma}$. Moreover, if $s = 1$, $\gamma \in [0,n)$ and ${\mathbb A} \in \R^2$,
then $M_{s,\gamma,\mathbb A} $ is the fractional maximal operator
which corresponds to potentials with logarithmic smoothness treated
in \cites{OT1,OT2}. In particular, if $\gamma = 0$, then
$M_{1,\gamma,\mathbb A}$ is the maximal operator of purely
logarithmic order.

Given $p$ and $q$, $0 < p,\,q < \infty$, let $M_{p,q}$ denote the
maximal operator associated to the Lorentz $L^{p,q}$ spaces defined for all measurable function $f$ on $\rn$ by
$$
M_{p,q} f (x) : = \sup_{Q \ni x} \frac{\|f \chi_Q\|_{p,q}}{\|\chi_Q\|_{p,q}},
$$
where $\|\cdot\|_{p,q}$ is the usual Lorentz norm
$$
\|f\|_{p,q} : = \bigg( \int_0^{\infty} \big[ \tau^{1 / p} f^*
(\tau)\big]^q \frac{d\tau}{\tau}\bigg)^{1 / q}.
$$
This operator was introduced by Stein in \cite{stein1981} in order
to obtain certain endpoint results in differentiation theory. The
operator $M_{p,q}$ have been also considered by other authors, for
instance see  \cite{neug1987,leckneug,basmilruiz,perez1995,ler2005}.

Let $0 < \alpha < \infty$, $b \in \W(0,\infty)$ and $\phi: (0,\infty)
\rightarrow (0,\infty)$. Recall the definition of the generalized maximal function introduced in \cite{musbil} and denoted for all measurable function $f$ on $\rn$ by
\begin{equation}
	M_{\phi,\Lambda^{\alpha}(b)}f(x) : = \sup_{Q \ni x} \frac{\|f \chi_Q\|_{\Lambda^{\alpha}(b)}}{\phi (|Q|)}, \qquad x \in \rn.
\end{equation}

Obviously, $M_{\phi,\Lambda^{\alpha}(b)} = M$, where $M$ is the Hardy-Littlewood maximal operator, when $\alpha = 1$,
$b \equiv 1$ and $\phi (t) = t$ $(t >0)$.
Note that $M_{\phi,\Lambda^{\alpha}(b)} = M_{\gamma}$, where
$M_{\gamma}$ is  the fractional maximal operator, when $\alpha = 1$,
$b \equiv 1$ and $\phi (t) = t^{1 - \gamma / n}$ $(t >0)$ with $0 <
\gamma < n$. Moreover, $M_{\phi,\Lambda^{\alpha}(b)} \ap
M_{s,\gamma,\mathbb A}$, when $\alpha = s$, $b \equiv 1$ and $\phi
(t) = t^{(n - \gamma) / (sn)} \ell^{\mathbb A} (t)$ $(t >0)$ with
$0 < \gamma < n$ and $\mathbb A = (A_0,A_{\infty}) \in {\mathbb
	R}^2$. It is worth also to mention that
$M_{\phi,\Lambda^{\alpha}(b)} = M_{p,q}$, when $\alpha = q$, $b(t) =
t^{q/ p - 1}$ and $\phi (t) = t^{1 / p}$ $(t >0)$.

The boundedness of $M_{\phi,\Lambda^{\alpha}(b)}$ between classical Lorentz spaces $\Lambda$ was completely characterized in \cite{musbil}. The norm of $M_{\phi,\Lambda^{\alpha}(b)}$ between two $\GG$ was calculated in \cite{musbilyil} for wide range of parameters under additional conditions on weight functions. In view of recent increasing interest in generalized Lorentz spaces in our opinion it will be interesting to obtain a characterization of the boundedness of this maximal function between generlized and classical Lorentz spaces.

The iterated Hardy-type operator involving suprema $T_{u,b}$ is defined for non-negative measurable function $g$ on the interval $(0,\infty)$ by 
$$
(T_{u,b} g)(t) : = \sup_{\tau \in [t,\infty)}
\frac{u(\tau)}{B(\tau)} \int_0^{\tau} g(y)b(y)\,dy,\qquad t \in (0,\infty),
$$
where $u$ and $b$ are two weight functions on $(0,\infty)$ such that $u$ is continuous on $(0,\infty)$ and the function $B(t) : = \int_0^t b(s)\,ds$ satisfies  $0 < B(t) < \infty$ for every $t \in (0,\infty)$. 
Such operators have been found indispensable in the search for optimal pairs of rearrangement-invariant norms for which a Sobolev-type inequality holds (cf. \cite{kerp}). They constitute a very useful tool for characterization of the associate norm of an
operator-induced norm, which naturally appears as an optimal domain norm in a Sobolev embedding (cf. \cite{pick2000, pick2002}). Supremum operators are also very useful in limiting interpolation theory as can be seen from their appearance for example in \cite{cwikpys, dok, evop, pys}.

In the present paper it is shown under appropriate conditions on parameters and weight functions that the inequality 
\begin{equation*}
\bigg( \int_0^{\infty} \big[ \big(M_{\phi,\Lambda^{\alpha}(b)}f\big)^* (x)\big]^q  w(x)\,dx\bigg)^{\frac{1}{q}} \le C \bigg( \int_0^{\infty} \bigg( \int_0^x [f^* (\tau)]^{p}\,d\tau \bigg)^{\frac{m}{p}} v(x)\,dx \bigg)^{\frac{1}{m}}
\end{equation*}
holds for all $f \in \mp (\rn)$ if and only if the inequality
$$
\bigg( \int_0^{\infty} \big[ T_{B/\phi^{\alpha},b} h^* (t) \big]^{\frac{q}{\alpha}} w(x)\,dx\bigg)^{\frac{1}{q}} \le C \bigg( \int_0^{\infty} \bigg( \int_0^x [h^* (\tau)]^{\frac{p}{\alpha}}\,d\tau \bigg)^{\frac{m}{p}} v(x)\,dx \bigg)^{\frac{1}{m}}
$$
holds for all $h \in \mp (\rn)$ (see Theorem \ref{main.reduc.thm}).

The above-mentioned observation motivates investigation of the following restricted inequality for $T_{u,b}$:
\begin{equation}\label{main.ineq.}
	\bigg( \int_0^{\infty} \big[ T_{u,b}f^* (x)\big]^q\, w(x)\,dx\bigg)^{\frac{1}{q}} \le C \, \bigg( \int_0^{\infty} \bigg( \int_0^x [f^* (\tau)]^p\,d\tau \bigg)^{\frac{m}{p}} v(x)\,dx \bigg)^{\frac{1}{m}}.
\end{equation}
Here $m,\,p,\,q$ are positive real numbers and $w,\,v$ are weight functions on  $(0,\infty)$. 

The method used for solution of inequalty \eqref{main.ineq.} is based on the combination of the duality techniques with the formula 
\begin{equation*}
	\sup_{g:\,\int_0^x g \le \int_0^x f} \int_0^{\infty} g(x)w(x)\,dx = \int_0^{\infty} f(x) \bigg( \sup_{t \in [x,\infty)} w(t)\bigg)\,dx
\end{equation*}
from \cite{Sinn}, which holds for $f,\,w \in \mp^+(0,\infty)$. On the other hand, it uses estimates of optimal constants in weighted Hardy-type inequalites, as well as in weighted inequalities for a superposition of the supremal or Copson operator with the Hardy operator or the Copson operator. Detailed information on materials that are used in the proofs of the main results is given in the following section.

However, we are not able to solve the inequality under the restrictions $1 < p \le q < m < \infty$ or $1 < q < \min\{p,m\} < \infty$, since for these values of parameters the conditions that characterize the weighted iterated Hardy-type inequalities contains more complicated expressions and the approach used in our paper needs an improvement.

Throughout the paper, we always denote by  $C$ a positive
constant, which is independent of main parameters but it may vary
from line to line. However a constant with subscript such as $C_1$
does not change in different occurrences. By $a\lesssim b$,
we mean that $a\leq \la b$, where $\la >0$ depends on
inessential parameters. If $a\lesssim b$ and $b\lesssim a$, we write
$a\approx b$ and say that $a$ and $b$ are  equivalent. 

As usual, we put $0 \cdot \infty = 0$, $\infty / \infty =0$ and $0/0 = 0$. If $p\in [1,+\infty]$, we define $p'$ by $1/p + 1/p' = 1$.

The paper is organized as follows. We start with formulations of background material in Section \ref{BM}. In Section \ref{s3} we present solution of the restricted inequality. Finally, in Section \ref{BofMF}, we calculate the norm of generalized maximal operator from $\GG$ spaces into $\Lambda$ spaces.

% % % % % % % % % % % % % % % % % % % % % % % % % % % % % % % % % % % % % % % % % % % % % % % % % % % % % % % % % % % % % % 

%%%%%%%%%%%%%%%%%%%%%%%%%%%%%%%%%%%%%%%%%%%%%%%%%%%%%%%%%%%%%%%%%%%%%%%%%%%%%%%%%%%%%%%%%%%%%%%%%%%%%%%%%%%%%%%%%%%%%%%%%%%%%%%%%%%%%%%%%%%%%%%
 
% % % % % % % % % % % % % % % % % % % % % % % % % % % % % % % % % % % % % % % % % % % % % % % % % % % % % % % % % % % % % %

% % % % % % % % % % % % % % % % % % % % % % % % % % % % % % % % % % % % % % % % % % % % % % % % % % % % % % % % % % % % % % 

%%%%%%%%%%%%%%%%%%%%%%%%%%%%%%%%%%%%%%%%%%%%%%%%%%%%%%%%%%%%%%%%%%%%%%%%%%%%%%%%%%%%%%%%%%%%%%%%%%%%%%%%%%%%%%%%%%%%%%%%%%%%%%%%%%%%%%%%%%%%%%%

% % % % % % % % % % % % % % % % % % % % % % % % % % % % % % % % % % % % % % % % % % % % % % % % % % % % % % % % % % % % % % 

%\newpage

\section{Background material}\label{BM}

In this section we collect background material that will be used in the proofs of the main theorems.

We begin with the following characterization of the norm of the associate space of $\GG(p,m,v)$ given in \cite{GPS}. Recall that the associate space $\GG(p,m,v)^{\prime}$ of $\GG(p,m,v)$ is defined as the collection of all functions $g \in \mp (\rn)$ such that
$$
\|g\|_{\GG(p,m,v)^{\prime}} = \sup_{\|f\|_{\GG(p,m,v)} \le 1} \int_0^{\infty} f^* (t) g^* (t)\,dt < \infty.
$$
As it is mentioned in \cite{GPS}, it is reasonable to adopt a general assumption that $p,\,m$ and $v$ are such that
\begin{equation}\label{nontriv}
\int_0^t v(s)s^{\frac{m}{p}}\,ds + \int_t^{\infty} v(s)\,ds < \infty,  \qquad t \in (0,\infty), 
\end{equation}
because if this requirement is not satisfied, then $\GG(p,m,v) = \{0\}$. 

Under the assumption \eqref{nontriv}, we denote
\begin{equation}\label{defof_v}
v_0 (t) : = t^{\frac{m}{p} - 1}\int_0^t v(s)s^{\frac{m}{p}}\,ds \int_t^{\infty} v(s)\,ds, \qquad t \in (0,\infty),
\end{equation}
and
\begin{equation}\label{defof_u}
v_1(t) : = \int_0^t v(s)s^{\frac{m}{p}}\,ds + t^{\frac{m}{p}} \int_t^{\infty} v(s)\,ds, \qquad t \in (0,\infty).
\end{equation}

Moreover, we assume that a weight $v$ is non-degenerate (with respect to the power function $t^{m / p}$), that is,
\begin{equation}\label{nondegen}
\int_0^1 v(s)\,ds = \int_1^{\infty} v(s)s^{\frac{m}{p}}\,ds = \infty.
\end{equation}

We denote the set of all weight functions satisfying conditions \eqref{nontriv} and \eqref{nondegen}  by $\W_{m,p}(0,\infty)$.
\begin{theorem}\label{assosGG}
	Assume that $0 < m,\,p < \infty$ and $v \in \W_{m,p}(0,\infty)$.
	
	{\rm (i)} Let $0 < m \le 1$ and $0 < p \le 1$. Then
	$$
	\|g\|_{\GG(p,m,v)^{\prime}} \ap \sup_{t \in (0,\infty)} g^{**}(t) \frac{t}{v_1(t)^{\frac{1}{m}}};
	$$
	
	{\rm (ii)} Let $0 < m \le 1$ and $1 < p < \infty$. Then
	$$
	\|g\|_{\GG(p,m,v)^{\prime}} \ap \sup_{t \in (0,\infty)} \bigg( \int_t^{\infty} g^{**}(s)^{p'}\,ds \bigg)^{\frac{1}{p'}} \frac{t^{\frac{1}{p}}}{v_1(t)^{\frac{1}{m}}};
	$$
	
	{\rm (iii)} Let $1 < m < \infty$ and $0 < p \le 1$. Then
	$$
	\|g\|_{\GG(p,m,v)^{\prime}} \ap \bigg( \int_0^{\infty} g^{**}(t)^{m'} \frac{t^{m'} v_0(t)}{v_1(t)^{m' + 1}}\,dt \bigg)^{\frac{1}{m'}};
	$$
	
	{\rm (iv)} Let $1 < m < \infty$ and $1 < p < \infty$. Then
	\begin{align*}
		\|g\|_{\GG(p,m,v)^{\prime}} \ap & \,\bigg( \int_0^{\infty} \bigg( \int_t^{\infty} g^{**}(s)^{p'}\,ds \bigg)^{\frac{m'}{p'}} \frac{t^{\frac{m'}{p'}}v_0(t)}{v_1(t)^ {m' + 1}}\,dt \bigg)^{\frac{1}{m'}}.
	\end{align*}	
\end{theorem}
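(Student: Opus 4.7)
My approach would be a four-case analysis built on a common reduction. First, by the Hardy--Littlewood rearrangement inequality and the realization of any non-negative non-increasing function on $(0,\infty)$ as the rearrangement of some measurable $f$ on $\rn$, the defining supremum reduces to
\begin{equation*}
\|g\|_{\GG(p,m,v)'} = \sup_{h \downarrow \ge 0} \frac{\int_0^{\infty} h(t) g^*(t)\,dt}{\left(\int_0^{\infty} \left(\int_0^x h(\tau)^p\,d\tau\right)^{m/p} v(x)\,dx\right)^{1/m}}.
\end{equation*}
Next, the change of variables $H := \int_0^{\cdot} h^p$ bijects the cone of non-increasing $h\ge 0$ with the cone of non-decreasing concave $H$ satisfying $H(0)=0$, with $h=(H')^{1/p}$ almost everywhere, recasting the denominator as $\|H\|_{L^{m/p}(v)}^{1/p}$ and the numerator as $\int_0^{\infty} H'(x)^{1/p} g^*(x)\,dx$. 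This move trades the $p$th power inside the iterated integral for a concavity constraint on the new variable.

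For the lower bounds I would test against $h_t = \chi_{(0,t)}$, which yields $\int h_t g^* = t g^{**}(t)$ and $\|h_t\|_{\GG}^m = v_1(t)$, hence $\|g\|_{\GG'} \gtrsim \sup_t t g^{**}(t)/v_1(t)^{1/m}$ --- this gives the required lower bound in case~(i). In cases~(ii) and (iv), where $p>1$, I would optimize the shape of $h$ on $(0,t)$ against the H\"older extremizer proportional to $(g^*)^{p'-1}$, producing the factor $\left(\int_t^{\infty} g^{**}(s)^{p'}\,ds\right)^{1/p'}$. In cases~(iii) and (iv), where $m>1$, the outer supremum over $t$ would be upgraded to an $L^{m'}$-integral via duality between $L^{m/p}(v)$ and its associate, and a Sawyer-type characterization of weighted Hardy inequalities on monotone cones would produce the weight combination $v_0(t)/v_1(t)^{m'+1}$.

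For the upper bounds, when $m \le 1$ (cases (i), (ii)) the outer expression is a plain integral and a layer-cake decomposition of $h$ combined with the quasi-subadditivity of $(\cdot)^m$ and $(\cdot)^{m/p}$ would reduce the supremum to its value on multiples of indicators, matching the lower bound. When $p \le 1$ (cases (i), (iii)) the concavity of $H$ gives $H'(x) \le H(x)/x$, which controls $H'^{1/p}$ pointwise and directly brings in $g^{**}$. When $p > 1$ (cases (ii), (iv)) an inner H\"older step applied to $\int H'^{1/p} g^*$ would extract the $L^{p'}$-tail of $g^{**}$. When $m > 1$ (cases (iii), (iv)) I would dualize the outer $L^{m/p}(v)$-norm and invoke the standard characterization of optimal constants in weighted Hardy-type inequalities on the cone of non-increasing functions to produce the stated outer weight.

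The main obstacle would be case~(iv), where the outer $L^{m'}$-duality and the inner $L^{p'}$-duality must be composed under the concavity constraint on $H$: a sharp two-weight Hardy inequality on monotone cones is required to isolate the combination $t^{m'/p'} v_0(t)/v_1(t)^{m'+1}$. Throughout, the non-degeneracy assumption \eqref{nondegen} is essential to ensure that $v_0$ and $v_1$ are both finite and positive so that the stated equivalences are non-trivial.
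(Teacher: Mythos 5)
The paper does not prove Theorem~\ref{assosGG} at all: it is stated as a citation from \cite{GPS} and used as a black box. There is therefore no ``paper's own proof'' to compare against, so your plan has to be judged on its internal correctness as a route to the result of \cite{GPS}.

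Your framework is reasonable in broad strokes: reducing the supremum over $f\in\mp(\rn)$ to a supremum over non-increasing $h$, re-parametrizing by $H=\int_0^{\cdot}h^p$ so that the denominator becomes $\|H\|_{m/p,v}^{1/p}$, and testing with $h_t=\chi_{(0,t)}$ (which correctly gives $\int h_t g^* = t g^{**}(t)$ and $\|h_t\|_{\GG}^m=v_1(t)$) are all sound and do recover the lower bound in case~(i). However, there is a concrete gap in the lower-bound heuristic for cases (ii) and (iv): you claim that optimizing $h$ on $(0,t)$ against the H\"older extremizer $\propto (g^*)^{p'-1}$ produces $\big(\int_t^{\infty} g^{**}(s)^{p'}\,ds\big)^{1/p'}$. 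It does not --- a H\"older extremizer supported on $(0,t)$ yields $\big(\int_0^t (g^*)^{p'}\big)^{1/p'}$, which involves $g^*$ rather than $g^{**}$ and integrates over $(0,t)$ rather than $(t,\infty)$. These are genuinely different objects, and the transition from one to the other (Sawyer's duality for $L^p$-norms over decreasing functions, combined with Sinnamon's transfer of monotonicity and a discretization/anti-discretization of the weight $v$) is precisely the technical heart of \cite{GPS}. Your plan also acknowledges that case~(iv), where the $L^{m'}$ and $L^{p'}$ dualities must be composed under the concavity constraint on $H$, is the ``main obstacle'' but offers no resolution, whereas this is the case where the blocking-sequence machinery is indispensable. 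As written, the proposal would stall exactly at the steps that make the theorem non-trivial, so it should be regarded as an outline of the problem rather than a proof of it.
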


We recall the following well-known duality principle in weighted Lebesgue spaces. 
\begin{theorem}
	Let $p > 1$, $f \in \M^+$	and $w \in \W (0,\infty)$. Then
	$$
	\bigg( \int_0^{\infty} f(t)^p w(t)\,dt \bigg)^{\frac{1}{p}} = \sup_{h \in \M^+ }\frac{\int_0^{\infty} f(t)h(t)\,dt}{\bigg(\int_0^{\infty}h(t)^{p'} w(t)^{1-p'}\,dt\bigg)^{\frac{1}{p'}}}.
	$$
\end{theorem}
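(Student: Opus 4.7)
The plan is to establish the two directions of the claimed equality separately, treating it as the standard weighted $L^p$--$L^{p'}$ duality. For the inequality $\rhs \le \lhs$, the key tool is Hölder's inequality with exponents $p$ and $p'$. Specifically, for any $h \in \M^+$, I would factor the integrand as
$$
\int_0^{\infty} f(t) h(t)\,dt = \int_0^{\infty} \big[ f(t) w(t)^{1/p} \big] \big[ h(t) w(t)^{-1/p} \big]\,dt,
$$
apply Hölder to get the bound $\big( \int_0^\infty f^p w \big)^{1/p} \big( \int_0^\infty h^{p'} w^{1-p'} \big)^{1/p'}$, and then divide through by the $L^{p'}(w^{1-p'})$ norm of $h$ to conclude that each such ratio is dominated by the left-hand side. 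Taking the supremum over $h \in \M^+$ yields $\rhs \le \lhs$.

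For the reverse direction $\lhs \le \rhs$, the strategy is to exhibit a near-extremal $h$. The natural candidate is $h(t) := f(t)^{p-1} w(t)$, motivated by the equality case in Hölder. Since $(p-1)p' = p$ and $p' + (1-p') \cdot 1 = 1$ in the exponent of $w$, a direct substitution gives
$$
\int_0^{\infty} f(t) h(t)\,dt = \int_0^{\infty} f(t)^p w(t)\,dt, \qquad \int_0^{\infty} h(t)^{p'} w(t)^{1-p'}\,dt = \int_0^{\infty} f(t)^p w(t)\,dt,
$$
so the corresponding ratio simplifies to $\big( \int_0^\infty f^p w \big)^{1-1/p'} = \big( \int_0^\infty f^p w \big)^{1/p}$. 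This shows the supremum is attained (or at least reached in the limit) and equals $\lhs$, provided $0 < \int_0^\infty f^p w < \infty$.

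The main obstacle is handling the degenerate endpoint cases where the candidate extremizer is not admissible: if $\int_0^\infty f^p w = 0$ then $f = 0$ a.e.\ and both sides vanish trivially; if $\int_0^\infty f^p w = \infty$, then $h = f^{p-1}w$ need not yield an integrand $h^{p'} w^{1-p'}$ with finite norm, so one cannot simply plug it in. To overcome this, I would use a truncation-exhaustion argument: set $f_n := \min\{f, n\} \chi_{(1/n, n)}$, apply the finite case to $f_n$ (noting that $\int_0^\infty f_n^p w$ is finite and tends monotonically to $\int_0^\infty f^p w$), and use monotone convergence along with $f_n \le f$ to transfer the lower bound to $f$. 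Since $f_n \le f$ implies $\int_0^\infty f h \ge \int_0^\infty f_n h$ for any $h \in \M^+$, the supremum for $f$ dominates that for each $f_n$, and letting $n \to \infty$ completes the argument. The case $p' = \infty$ (i.e.\ $p = 1$) is excluded by hypothesis, so no separate treatment is needed there.
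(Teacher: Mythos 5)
The paper states this theorem as a well-known duality principle in weighted Lebesgue spaces and supplies no proof, so there is no argument in the paper to compare against. Your proof is correct and is the standard one: Hölder's inequality applied after factoring the integrand with $w^{1/p}$ and $w^{-1/p}$ gives $\operatorname{RHS}\le\operatorname{LHS}$, the extremizer $h=f^{p-1}w$ (which makes the Hölder inequality an equality, since $(p-1)p'=p$ and the $w$-exponents telescope to $1$) gives $\operatorname{LHS}\le\operatorname{RHS}$ when $0<\int_0^\infty f^p w<\infty$, and the truncations $f_n=\min\{f,n\}\chi_{(1/n,n)}$ handle the $+\infty$ endpoint via monotone convergence, using local integrability of $w$ to ensure $\int f_n^p w<\infty$. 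The only remark worth adding is that the paper's conventions $0/0=0$ and $\infty/\infty=0$ are implicitly in force, which disposes of the degenerate ratios cleanly (in particular, if $w$ vanishes on a set of positive measure then $w^{1-p'}=\infty$ there, forcing the denominator to $\infty$ for any $h$ charging that set, so the supremum is effectively over $h$ supported in $\{w>0\}$); your handling of the $\int f^p w=0$ and $\int f^p w=\infty$ cases is already adequate.
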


We will use the following statement. 
\begin{theorem}\cite[Theorem 2.1]{Sinn}\label{transfermon}
	Suppose $f,\,w \in \mp^+$. Then
	\begin{equation}\label{transmon}
	\sup_{g:\,\int_0^x g \le \int_0^x f} \int_0^{\infty} g(x)w(x)\,dx = \int_0^{\infty} f(x) \bigg( \sup_{t \in [x,\infty)} w(t)\bigg)\,dx.
	\end{equation}
\end{theorem}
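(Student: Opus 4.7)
The plan is to prove the two inequalities separately. Writing $W(x) := \sup_{t \in [x, \infty)} w(t)$, the function $W$ is non-increasing and non-negative on $(0, \infty)$ and dominates $w$ pointwise, so the identity reduces to showing that the supremum equals $\int_0^\infty f W$. For the direction $\le$, let $g \ge 0$ be admissible and set $G(x) := \int_0^x g \le \int_0^x f =: F(x)$. Since $W$ is non-increasing and non-negative, every super-level set $\{W > s\}$ is a half-line $[0, a_s)$; combining $w \le W$ with the layer-cake identity $W(x) = \int_0^\infty \chi_{\{W > s\}}(x)\,ds$ and Fubini gives
\begin{equation*}
\int_0^\infty g w \le \int_0^\infty g W = \int_0^\infty G(a_s)\,ds \le \int_0^\infty F(a_s)\,ds = \int_0^\infty f W,
\end{equation*}
which settles this direction.

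For the direction $\ge$, the guiding idea is mass transport to the right: if $T : (0, \infty) \to (0, \infty)$ is measurable with $T(y) \ge y$, then the pushforward measure $\mu := T_\ast(f\,dx)$ automatically satisfies $\mu([0, x]) = \int \chi_{\{T \le x\}} f \le \int_0^x f = F(x)$, while $\int w\,d\mu = \int w(T(y)) f(y)\,dy$. For each $\varepsilon > 0$ I would choose $T_\varepsilon$ to be a measurable selection from the multifunction $y \mapsto \{t \ge y : w(t) > W(y) - \varepsilon\}$, which is non-empty for every $y$ with $W(y) > 0$ by the definition of $W$, so that $\int w\,d\mu_\varepsilon \ge \int f W - \varepsilon \int f$. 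It then remains to replace $\mu_\varepsilon$ by an actual density $g\,dx$ without spoiling either property; this is achieved by smearing any atoms or singular parts of $\mu_\varepsilon$ slightly to the right over tiny intervals (redistributing to the right automatically preserves the admissibility $\int_0^x g \le F(x)$), and a Lusin argument, combined with a preliminary reduction to $w$ bounded with bounded support via monotone convergence, controls the resulting loss in the $w$-integral. Letting $\varepsilon \downarrow 0$ completes the proof when $\int f W < \infty$, and the divergent case follows by truncating $w$ and $f$ and passing to the limit.

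The main obstacle is the final replacement of the pushforward measure by an absolutely continuous $g$: the pushforward $\mu_\varepsilon$ may be purely singular, with atoms at arbitrary points of $(0, \infty)$ whenever $T_\varepsilon$ is constant on a set of positive $f\,dx$-measure, and smearing an atom at $t^\ast$ over $[t^\ast, t^\ast + \delta]$ only recovers its original contribution $w(t^\ast) \cdot \text{mass}$ when $w$ behaves regularly near $t^\ast$. Lusin's theorem supplies such regularity outside an exceptional set of arbitrarily small measure, but coupling it with the mass-transport construction so that the relevant target points $T_\varepsilon(y)$ actually land in the ``good'' Lusin set is the delicate step; this is where the bulk of the careful bookkeeping lies, and it is the part of the argument I would expect to require the most technical effort.
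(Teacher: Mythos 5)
Your forward inequality ``$\le$'' is correct: combining $w \le W$ with the layer-cake representation $W(x)=\int_0^\infty\chi_{\{W>s\}}(x)\,ds$ for the non-increasing function $W$ and Fubini reduces the claim to $G(a_s)\le F(a_s)$, which is exactly the admissibility constraint. The reverse direction, however, has a genuine gap -- the one you yourself flag as requiring ``the most technical effort'' without resolving it. Pushing $f\,dx$ forward along a pointwise selection $T_\varepsilon(y)\in\{t\ge y: w(t)>W(y)-\varepsilon\}$ produces a measure that may be singular, and the proposed right-smearing of its atoms cannot be controlled by Lusin's theorem: the Lusin exceptional set can be made to have measure $<\eta$ for any prescribed $\eta$, but the target sets $\{t\ge y: w(t)>W(y)-\varepsilon\}$ may have measure tending to $0$ as $y$ varies (take $w$ to be ever thinner spikes of a fixed height), so one cannot uniformly force $T_\varepsilon(y)$ into the Lusin set $K$; and even when $T_\varepsilon(y)\in K$, it may be the right endpoint of a gap of $K$, so that smearing to the right lands where $w$ is uncontrolled. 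As written, the construction does not yield an admissible density $g$ with $\int gw$ near $\int fW$.

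The idea is salvageable by replacing the pointwise map with a diffuse transport: spread each piece of $f$-mass uniformly over a positive-measure subset of the target set rather than concentrating it at a point. Concretely, after truncating so that $f$ and $w$ are bounded with bounded support, fix $\varepsilon>0$ and a partition $0=y_0<y_1<\cdots<y_m$, set $m_j:=\int_{y_{j-1}}^{y_j}f$, choose $E_j'$ of finite positive measure inside $\{t\ge y_j: w(t)>W(y_j)-\varepsilon\}$ (nonnull precisely because $W(y_j)$ is the essential supremum of $w$ on $[y_j,\infty)$), and put $g:=\sum_j (m_j/|E_j'|)\chi_{E_j'}$. Since $E_j'\subseteq[y_j,\infty)$, one has $\int_0^x g\le\sum_{j:\,y_j<x}m_j\le F(x)$, so $g$ is admissible, while $\int_0^\infty gw\ge\sum_j m_j(W(y_j)-\varepsilon)=\int f\tilde W-\varepsilon\|f\|_1$ with the step minorant $\tilde W:=\sum_j W(y_j)\chi_{[y_{j-1},y_j)}\le W$. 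Refining the mesh (using that the non-increasing $W$ is a.e.\ continuous) and letting $\varepsilon\to0$ and the truncations go to the limit gives the lower bound; this bypasses both the measurable-selection issue and the singular-to-absolutely-continuous conversion that stalls your argument. Finally, note that the identity really requires $W(x)=\esup_{t\in[x,\infty)}w(t)$: with the literal pointwise supremum, $f=\chi_{(0,1)}$ and $w=\chi_{\{1\}}$ give right-hand side $1$ and left-hand side $0$, and indeed any transport argument can only ever see $w$ up to Lebesgue-null sets.
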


Let us now recall (now classical) well-known characterizations of weights for which Hardy and Copson inequalities hold. The following two theorems, which are, incidentally, exactly one hundred years old, are absolutely
indispensable   in various parts of mathematics  (cf. \cite{ok,kp,kufmalpers,kufperssam}).
\begin{theorem}\label{thm.Hardy}
Let $1 < p,\,q < \infty$ and $v,\,w \in \W (0,\infty)$. Denote the best constant in the inequality
\begin{equation*}
\bigg( \int_0^{\infty} \bigg( \int_0^x f(s)\,ds \bigg)^q w(x)\,dx\bigg)^{\frac{1}{q}} \le C \, \bigg( \int_0^{\infty} f(x)^p v(x) \,dx\bigg)^{\frac{1}{p}}, \quad f \in \mp^+,
\end{equation*}
by
$$
C_1 : = \sup_{f \in \mp^+} \frac{\bigg( \int_0^{\infty} \bigg( \int_0^x f(s)\,ds \bigg)^q w(x)\,dx\bigg)^{\frac{1}{q}}}{\bigg( \int_0^{\infty} f(x)^p v(x) \,dx\bigg)^{\frac{1}{p}}}.
$$

{\rm (a)} Let $p \le q$. Then 
$$
C_1 \approx \sup_{t \in (0,\infty)} \bigg( \int_x^{\infty} w(s)\,ds\bigg)^{\frac{1}{q}} \bigg( \int_0^x v(s)^{1-p'}\,ds\bigg)^{\frac{1}{p}}.
$$

{\rm (b)} Let $q < p$. Then 
$$
C_1 \approx \bigg( \int_0^{\infty} \bigg( \int_x^{\infty} w(s)\,ds\bigg)^{\frac{q}{p - q}} \, w(x) \, \bigg( \int_0^x v(s)^{1-p'}\,ds\bigg)^{\frac{q(p-1)}{p-q}} \,dx \bigg)^{\frac{p-q}{pq}}.
$$
\end{theorem}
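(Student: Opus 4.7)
The plan is to adopt the classical Muckenhoupt--Bradley scheme: the lower bounds on $C_1$ come from inserting extremal test functions, while the upper bounds follow from H\"older's inequality combined with the fundamental theorem of calculus for $V(x) := \int_0^x v(s)^{1-p'}\,ds$.

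For the lower bound in part (a), I would fix $t > 0$, test with $f(s) := v(s)^{1-p'}\chi_{(0,t)}(s)$, restrict the outer integration on the left-hand side to $x > t$ (where $\int_0^x f = V(t)$), compute the two sides, and take the supremum over $t$; this immediately produces a Muckenhoupt-type supremum of the required form. For the lower bound in part (b), the appropriate extremizer is of Sawyer type, namely a multiple of $v(s)^{1-p'}\bigl(\int_0^s v^{1-p'}\bigr)^{\alpha}\bigl(\int_s^\infty w\bigr)^{\beta}$ cut off at a level $t$, with exponents $\alpha,\,\beta$ tuned so that plugging this function into both sides of the inequality, applying Fubini, and letting $t\to\infty$ produces the same Mazya--Rozin integral on each side.

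For the upper bound, the common starting point is the pointwise H\"older estimate
\[
\int_0^x f(s)\,ds \le \bigg(\int_0^x f(s)^p v(s)\,ds\bigg)^{1/p} V(x)^{1/p'},
\]
obtained from the splitting $f = (f v^{1/p})\cdot v^{-1/p}$. Setting $F(x) := \int_0^x f^p v$ and inserting this into the Hardy inequality reduces the task to estimating $\int_0^\infty F(x)^{q/p} V(x)^{q/p'} w(x)\,dx$. In case (a), where $q/p \ge 1$, the estimate closes by Fubini together with the assumed finiteness of the stated supremum.

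The main obstacle is case (b). Here the naive H\"older step no longer yields the sharp constant, and my plan is to dualize using the weighted $L^p$-duality formula stated just above: the outer $L^q(w)$-norm becomes a supremum over test functions $h \in \mp^+$, and Fubini then converts the inequality into a Hardy-type one with reversed inner integration $\int_x^\infty$, with exponents $p',\,q'$ now in the relation $p' \le q'$ (the regime of part (a)), and with weights $w^{1-q'},\,v^{1-p'}$. The supremum characterization produced on the dual side translates, after a routine manipulation of weight powers, into the Mazya--Rozin integral of part (b). The delicate bookkeeping of these weight exponents is where the main technical care is required.
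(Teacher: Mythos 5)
The paper offers no proof of this theorem; it is quoted as a classical result with pointers to the standard monographs \cite{ok,kp,kufmalpers,kufperssam}. So there is no in-paper argument to compare against, and the task reduces to checking your proposed reconstruction. It contains two genuine gaps.

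\textbf{The H\"older step for the upper bound in (a) does not close.} Your pointwise estimate
\[
\int_0^x f(s)\,ds \le \bigg(\int_0^x f^p v\bigg)^{1/p} V(x)^{1/p'},
\]
coming from the flat splitting $f = (fv^{1/p})\cdot v^{-1/p}$, is lossy, and Fubini alone does not rescue it even when the Muckenhoupt quantity is finite. Take the classical example $p = q$, $v \equiv 1$, $w(x) = x^{-p}$, so $V(x) = x$ and $\sup_t(\int_t^\infty w)^{1/q}V(t)^{1/p'}$ is a finite constant. Then your bound gives
\[
\int_0^\infty \Big(\int_0^x f\Big)^p x^{-p}\,dx
\ \le\ \int_0^\infty F(x)\,x^{p/p'-p}\,dx
\ =\ \int_0^\infty F(x)\,\frac{dx}{x}
\ =\ \int_0^\infty f(s)^p\Big(\int_s^\infty \frac{dx}{x}\Big)\,ds,
\]
which diverges. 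The classical Muckenhoupt proof must modulate the split by an auxiliary power of $V$, i.e.\ use $f = \bigl(fv^{1/p}V^{\theta}\bigr)\bigl(v^{-1/p}V^{-\theta}\bigr)$ for some $0<\theta<1/p'$, so that the resulting factor is $V(x)^{\gamma}$ with $\gamma<1/p'$ and the integration-by-parts step against $\int_t^\infty w$ converges; without this (or an equivalent substitute such as Minkowski's integral inequality after a change of variables in $V$) the sharp constant is unreachable.

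\textbf{The duality reduction for (b) lands in the wrong regime.} Dualizing $\|Hf\|_{L^q(w)}\le C\|f\|_{L^p(v)}$ under the plain pairing yields the Copson inequality $\|H^*g\|_{L^{p'}(v^{1-p'})}\le C\|g\|_{L^{q'}(w^{1-q'})}$ with $H^*g(s)=\int_s^\infty g$, where the \emph{source} exponent is $q'$ and the \emph{target} exponent is $p'$. The hypothesis $q<p$ is equivalent to $p'<q'$, i.e.\ to target~$<$~source for the dual problem — that is, the dual is still a lower-triangle inequality, not an instance of part (a) or of Theorem~\ref{thm.Copson}(a). Duality is an involution that preserves the triangle, so no bookkeeping of weight powers can convert the supremum of the upper-triangle case into the Mazya--Rozin integral of part (b); that integral must be established directly (Sawyer-type duality on the decreasing cone, a discretization/block argument, or the Persson--Stepanov reduction). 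As a minor separate remark, note that the displayed exponent in part (a) of the paper's statement should read $1/p'$ rather than $1/p$; your test-function computation in fact produces $V(t)^{1/p'}$ on the lower-bound side, which is the correct classical form.
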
	

\begin{theorem}\label{thm.Copson}
Let $1 < p,\,q < \infty$ and $v,\,w \in \W (0,\infty)$. Denote the best constant in the inequality
\begin{equation*}
\bigg( \int_0^{\infty} \bigg( \int_x^{\infty} f(s)\,ds \bigg)^q w(x)\,dx\bigg)^{\frac{1}{q}} \le C \, \bigg( \int_0^{\infty} f(x)^p v(x) \,dx\bigg)^{\frac{1}{p}}, \quad  f \in \mp^+,
\end{equation*}
by
$$
C_2 : = \sup_{f \in \mp^+} \frac{\bigg( \int_0^{\infty} \bigg( \int_x^{\infty} f(s)\,ds \bigg)^q w(x)\,dx\bigg)^{\frac{1}{q}}}{\bigg( \int_0^{\infty} f(x)^p v(x) \,dx\bigg)^{\frac{1}{p}}}.
$$
	
{\rm (a)} Let $p \le q$. Then 
$$
C_2 \approx \sup_{t \in (0,\infty)} \bigg( \int_0^x w(s)\,ds\bigg)^{\frac{1}{q}} \bigg( \int_x^{\infty} v(s)^{1-p'}\,ds\bigg)^{\frac{1}{p}}.
$$
	
{\rm (b)} Let $q < p$. Then 
$$
C_2 \approx \bigg( \int_0^{\infty} \bigg( \int_0^x w(s)\,ds\bigg)^{\frac{q}{p - q}} \, w(x) \, \bigg( \int_x^{\infty} v(s)^{1-p'}\,ds\bigg)^{\frac{q(p-1)}{p-q}} \,dx \bigg)^{\frac{p-q}{pq}}.
$$
\end{theorem}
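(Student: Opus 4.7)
The strategy is to reduce the Copson inequality to the Hardy inequality of Theorem~\ref{thm.Hardy} via a two-fold application of duality in weighted Lebesgue spaces combined with Fubini's theorem. The key observation is that the Copson operator $g \mapsto \int_{(\cdot)}^{\infty} g$ is, via Fubini, the formal adjoint of the Hardy operator $g \mapsto \int_0^{(\cdot)} g$, so its boundedness $L^p(v) \to L^q(w)$ is equivalent to the boundedness of a Hardy operator between the conjugate dual spaces.

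Concretely, first I would apply the duality formula in $L^q(w)$ (valid since $q>1$) and Fubini's theorem to rewrite
\[
\left( \int_0^\infty \left(\int_x^\infty f(s)\,ds\right)^{q} w(x)\,dx\right)^{1/q}
\;=\; \sup_{g \in \mathfrak M^+} \frac{\int_0^\infty f(s)\left(\int_0^s g(x)\,dx\right)\,ds}{\left(\int_0^\infty g(x)^{q'} w(x)^{1-q'}\,dx\right)^{1/q'}}.
\]
Dualizing also in $L^p(v)$ (taking supremum over $f \in \mathfrak M^+$), the Copson inequality holds for all $f$ with constant $C$ if and only if the Hardy inequality
\[
\left(\int_0^\infty \left(\int_0^s g(x)\,dx\right)^{p'} v(s)^{1-p'}\,ds\right)^{1/p'}
\;\le\; C \left(\int_0^\infty g(x)^{q'} w(x)^{1-q'}\,dx\right)^{1/q'}
\]
holds for all $g \in \mathfrak M^+$. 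Hence $C_2$ equals the best constant in a Hardy-type inequality with parameter identification $p_H := q'$, $q_H := p'$ and weights $v_H := w^{1-q'}$, $w_H := v^{1-p'}$.

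Now I would invoke Theorem~\ref{thm.Hardy}. Case~(a) $p \le q$ corresponds to $p_H = q' \le p' = q_H$, hence to Theorem~\ref{thm.Hardy}(a); case~(b) $q < p$ corresponds to $q_H < p_H$, hence to Theorem~\ref{thm.Hardy}(b). In both cases, the elementary identity $(1-q')(1-q) = 1$ gives $v_H^{1-p_H'} = (w^{1-q'})^{1-q} = w$, so $\int_0^t v_H^{1-p_H'} = \int_0^t w$, while the outer weight simplifies to $w_H = v^{1-p'}$. Case~(a) then immediately yields the claimed supremum expression after the conjugate-exponent translation. In case~(b) the raw output of Theorem~\ref{thm.Hardy}(b) is
\[
C_2 \approx \left(\int_0^\infty \left(\int_t^\infty v^{1-p'}\right)^{p(q-1)/(p-q)} v(t)^{1-p'} \left(\int_0^t w\right)^{p/(p-q)}\,dt\right)^{(p-q)/(pq)},
\]
so a single integration by parts — writing $v(t)^{1-p'}\,dt = -dV(t)$ with $V(t) := \int_t^\infty v^{1-p'}$, antidifferentiating $V^{p(q-1)/(p-q)}$ to a multiple of $V^{q(p-1)/(p-q)}$, and using $(\int_0^t w)' = w(t)$ — converts this to the stated integral (the boundary terms vanishing under the finiteness of $C_2$). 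The principal technical obstacle is the conjugate-exponent arithmetic in the parameter translation together with the integration-by-parts bookkeeping in case~(b); the arithmetic exploits $q'-p' = (p-q)/((p-1)(q-1))$ to convert ratios like $q_H/(p_H-q_H)$ into $p(q-1)/(p-q)$, and the resulting exponents $p/(p-q) - 1 = q/(p-q)$ and $p(q-1)/(p-q) + 1 = q(p-1)/(p-q)$ deliver exactly the form stated for $C_2$.
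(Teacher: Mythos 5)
The paper states Theorem~\ref{thm.Copson} without proof, presenting it as a classical fact and citing standard references, so there is no in-paper argument to compare against. Your two-step dualization (first in $L^q(w)$ via Fubini, then in $L^p(v)$) is correct and is one of the standard ways of deriving the Copson characterization from the Hardy one: it identifies $C_2$ with the best Hardy constant for parameters $(p_H,q_H,v_H,w_H) = (q',p',w^{1-q'},v^{1-p'})$, and the identity $(1-q')(1-q)=1$ collapses $v_H^{1-p_H'}$ back to $w$. Your conjugate-exponent arithmetic is accurate: $p'/(q'-p') = p(q-1)/(p-q)$, $p'(q'-1)/(q'-p') = p/(p-q)$, and $(q'-p')/(q'p') = (p-q)/(pq)$. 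In part (b), the raw translation of Theorem~\ref{thm.Hardy}(b) gives the Maz'ya--Rosin form $\big(\int_0^\infty V^{p(q-1)/(p-q)}\,v^{1-p'}\,W^{p/(p-q)}\,dx\big)^{(p-q)/(pq)}$ with $W = \int_0^{\,\cdot}w$ and $V = \int_{\,\cdot}^\infty v^{1-p'}$, and your integration by parts correctly converts this to the stated form $\big(\int_0^\infty W^{q/(p-q)}\,w\,V^{q(p-1)/(p-q)}\,dx\big)^{(p-q)/(pq)}$. The vanishing of boundary terms requires the sort of equivalence argument encapsulated in a lemma like Theorem~\ref{thm.IBP.0}, but you flag it, which suffices at sketch level.

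One discrepancy you did not note: the exponent $\tfrac{1}{p}$ on $\int_x^\infty v^{1-p'}$ in the paper's part (a) (and the analogous $\tfrac{1}{p}$ in Theorem~\ref{thm.Hardy}(a), together with the stray index $t$ in the suprema where $x$ is clearly intended) is a misprint for $\tfrac{1}{p'}$. Your derivation, correctly carried out, yields the exponent $\tfrac{1}{p'}$, which is the classical Muckenhoupt--Bradley condition and is what the paper actually uses in its later applications (compare the $\big(\int_s^\infty \tau^{-p'}\,d\tau\big)^{1/p'}$ and $\big(\int_0^s (B(\tau)/\tau)^{p'}\,d\tau\big)^{1/p'}$ factors inside the proof of Theorem~\ref{2stresult}). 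So your assertion that case (a) \emph{immediately} yields the claimed expression deserves this caveat, but the underlying mathematics is right.
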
	

We next quote results concerning characterizations of inequalities involving supremum operators in the following two statements.
\begin{theorem}\label{thm44b}
	Let $ 1 < p < \infty $. Given $t \in [0,\infty)$ assume that $u \in \W (t,\infty) \cap C (t,\infty)$ and  $a, v,\,w \in \W (t,\infty)$ such that $ 0 < \int_{t}^{x} v < \infty $ and $ 0 < \int_{t}^{x} w < \infty $ for $x \in (t,\infty)$. Then the inequality
	\begin{equation}\label{gogopick3}
	\int_t^{\infty} \bigg[\sup_{y \in [x,\infty)} u(y) \int_t^y g(s)\, ds\bigg] w(x)\, dx \le C \, \bigg(\int_t^{\infty} g(x)^p v(x) \, dx\bigg)^{\frac{1}{p}}
	\end{equation}
	holds for all $ g\in \mathfrak{M}^{+} [t,\infty)$ if and only if
	\begin{equation*}
	D_1 := \bigg(\int_{t}^{\infty} \bigg[\sup_{\tau \in [x,\infty)} \bigg[\sup_{y \in [\tau,\infty)}u(y)^{p'}\bigg] \bigg(\int_{t}^{\tau} v(s)^{1 - p'}\,ds\bigg)\bigg] \bigg(\int_{t}^{x} w(s)\, ds\bigg)^{p' - 1} w(x) \,dx \bigg)^{\frac{1}{p'}} < \infty
	\end{equation*}
	and
	\begin{equation*}
	D_2 := \bigg(\int_{t}^{\infty} \bigg(\int_{x}^{\infty} \bigg[\sup_{\tau \in [y,\infty)} u(\tau)\bigg] w(y) \, dy\bigg)^{p' - 1} \bigg[\sup_{\tau \in [x,\infty)} u(\tau)\bigg] \bigg(\int_{t}^{x} v(s)^{1 - p'}\,ds\bigg)\, w(x)\,dx \bigg)^{\frac{1}{p'}} < \infty.
	\end{equation*}
	Moreover, the least constant $C$ such that \eqref{gogopick3} holds for all $ g\in \mathfrak{M}^{+} $ satisfies $ C \approx D_1 + D_2$.
\end{theorem}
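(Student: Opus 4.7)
The natural strategy is to split the iterated sup operator pointwise into a Hardy piece and a ``moving-endpoint'' piece, and then invoke the classical weighted Hardy and Copson inequalities (Theorems~\ref{thm.Hardy}, \ref{thm.Copson}) in the endpoint case $q=1<p$. Set $U(x):=\sup_{y\in[x,\infty)}u(y)$, $G(x):=\int_t^x g$, and $S(x):=\sup_{y\in[x,\infty)}u(y)\int_x^y g$. Writing $\int_t^y g = G(x)+\int_x^y g$ for $y\ge x$ and distributing the outer supremum yields
$$
T_u g(x) := \sup_{y\in[x,\infty)} u(y)\int_t^y g \;\le\; U(x)G(x)+S(x),
$$
while the trivial lower bounds $T_u g(x)\ge U(x)G(x)$ (using $G(y)\ge G(x)$ for $y\ge x$) and $T_u g(x)\ge S(x)$ give the matching estimate $T_u g(x)\ge \tfrac12\bigl(U(x)G(x)+S(x)\bigr)$. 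Hence \eqref{gogopick3} is equivalent, up to absolute constants, to the conjunction
$$
\int_t^\infty U(x)G(x)\,w(x)\,dx \le C_1\Bigl(\int_t^\infty g^p v\Bigr)^{1/p} \qquad\text{and}\qquad \int_t^\infty S(x)\,w(x)\,dx \le C_2\Bigl(\int_t^\infty g^p v\Bigr)^{1/p},
$$
with $C\approx C_1+C_2$.

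For the first inequality, Fubini rewrites the left side as $\int_t^\infty g(s)\bigl(\int_s^\infty U(x)w(x)\,dx\bigr)\,ds$, and $L^p$--$L^{p'}$ duality yields the sharp constant $C_1\approx \bigl(\int_t^\infty(\int_s^\infty Uw)^{p'}v^{1-p'}\,ds\bigr)^{1/p'}$. An integration by parts with primitive $A(s):=\int_t^s v^{1-p'}$, using $A(t)=0$ and the assumed decay of $\int_s^\infty Uw$ at infinity, converts this into the expression $D_2$; this agrees with the $q=1<p$ case of Theorem~\ref{thm.Hardy}(b) applied to the weights $Uw$ and $v$.

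For the second inequality, I first record the identity $S(x)=\sup_{y\ge x}U(y)\int_x^y g$, obtained by swapping suprema: $\sup_{z\ge y\ge x}u(z)\int_x^y g = \sup_{z\ge x}u(z)\sup_{y\in[x,z]}\int_x^y g = \sup_{z\ge x}u(z)\int_x^z g$, since $\int_x^\cdot g$ is non-decreasing. Since $S$ is non-increasing and the supremum in $y$ sits inside a $w$-integral, I apply the Sinnamon transfer \eqref{transmon} to move $\sup_{y\in[\cdot,\infty)}$ onto the test weight. The resulting inequality is of Copson type (integral $\int_x^\infty$) with weights built from $U$, and Theorem~\ref{thm.Copson}(b) with $q=1<p$ then yields precisely $C_2\approx D_1$.

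The principal obstacle is the second piece: the shifting lower limit in $\int_x^y g$ blocks any direct reduction to a Hardy or Copson operator, and the nested suprema appearing inside $D_1$ are the signature of the Sinnamon transfer step. Once $C_1\approx D_2$ and $C_2\approx D_1$ are in hand, the pointwise decomposition gives $C\approx C_1+C_2\approx D_1+D_2$.
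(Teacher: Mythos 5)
Your proposed decomposition $T_ug(x)=\sup_{y\ge x}u(y)\int_t^y g\approx U(x)G(x)+S(x)$, with $U(x)=\sup_{y\ge x}u(y)$, $G(x)=\int_t^x g$, $S(x)=\sup_{y\ge x}u(y)\int_x^y g$, is correct pointwise, and the treatment of the Hardy piece is sound: by Fubini the inequality $\int_t^\infty UGw\le C_1\|g\|_{L^p(v)}$ has sharp constant $C_1=\bigl(\int_t^\infty(\int_s^\infty Uw)^{p'}v(s)^{1-p'}\,ds\bigr)^{1/p'}$ by H\"older duality, and an integration by parts in the spirit of Theorem~\ref{thm.IBP.0} turns this into $D_2$. (Small caveat: you appeal to Theorem~\ref{thm.Hardy}(b) at $q=1$, which is outside its stated range $1<q<\infty$, but your direct duality argument makes that appeal unnecessary.) Note, however, that the paper does none of this: its entire proof of Theorem~\ref{thm44b} is a citation of Gogatishvili--Opic--Pick, Theorem 4.4 of \cite{gop}, for $t=0$, followed by a change of variables. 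So your route is genuinely different --- it is an attempted re-derivation of the cited theorem.

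The gap is the second piece. You assert that the characterization of $\int_t^\infty S(x)w(x)\,dx\le C_2\|g\|_{L^p(v)}$ with $C_2\approx D_1$ follows from the Sinnamon transfer (Theorem~\ref{transfermon}) plus Copson (Theorem~\ref{thm.Copson}(b)), but this does not go through as described. Theorem~\ref{transfermon} converts $\int_0^\infty f(x)\bigl(\sup_{s\ge x}w(s)\bigr)\,dx$ into $\sup_{h:\,\int_0^x h\le\int_0^x w}\int hf$ for a \emph{fixed} integrand $f$; here the supremum in $S(x)=\sup_{y\ge x}u(y)\int_x^y g$ has a moving lower endpoint and depends on the test function $g$, so it is not of the form the theorem addresses. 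Moreover, the nested-supremum structure of $D_1$, namely $\sup_{\tau\ge x}\bigl[U(\tau)^{p'}\int_t^\tau v^{1-p'}\bigr]$ inside an outer $w$-integral, is precisely the kind of condition that arises from discretization/anti-discretization arguments (as in \cite{gop}), and a single Copson step produces no such nesting. Finally, even granting the decomposition $C\approx C_1+C_2$, the identity $C_2\approx D_1$ is itself a nontrivial claim: it would say that the $S$-piece is characterized exactly by $D_1$, independently of $D_2$, and you offer no necessity argument (no test functions) and no sufficiency argument beyond the hand-wave. In short, the first piece is fine, but the $S$-piece is where the real work of \cite{gop} lies, and it is not recovered by the tools you invoke.
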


\begin{proof}
	The statement was formulated in \cite[Theorem 4.4]{gop} for $t = 0$. The proof directly follows by using change of variables of the type $x + t = y$ several times when $t > 0$.
\end{proof}

\begin{theorem}\label{krepelathm6b} 
	Let $ 1 < p < \infty $. Given $t \in [0,\infty)$ assume that $u \in \W (t,\infty) \cap C (t,\infty)$ and  $a, v,\,w \in \W (t,\infty)$ such that $ 0 < \int_{t}^{x} v < \infty $ and $ 0 < \int_{t}^{x} w < \infty $ for $x \in (t,\infty)$. Then the inequality
	\begin{equation}\label{krepelathm6b+}
	\int_t^{\infty} \bigg[\sup_{y \in [x,\infty)} u(y) \int_{y}^{\infty} g(s)\, ds\bigg]\, w(x)\, dx \le C \, \bigg(\int_t^{\infty} g(x)^p v(x) \, dx\bigg)^{\frac{1}{p}}
	\end{equation}
	holds for all $ g\in \mathfrak{M}^{+} [t,\infty)$ if and only if
	\begin{equation*}
	E_1 := \bigg(\int_t^{\infty} \bigg[\sup_{\tau \in [x,\infty)} u(\tau)^{p'} \bigg( \int_{\tau}^{\infty} v(s)^{1 - p'}\,ds\bigg)\bigg]\bigg(\int_t^x w(s)\, ds\bigg)^{p' - 1} \, w(x)\,dx \bigg)^{\frac{1}{p'}} < \infty
	\end{equation*}
	and
	\begin{equation*}
	E_2 := \bigg(\int_t^{\infty} \bigg(\int_t^x \bigg[\sup_{y\in [s,x]}u(y)\bigg] \, w(s)\, ds\bigg)^{p' - 1}\bigg[ \sup_{\tau \in [x,\infty)} u(\tau) \bigg( \int_{\tau}^{\infty} 	v(s)^{1 - p'}\,ds \bigg) \bigg] \, w(x)\,dx \bigg)^{\frac{1}{p'}} < \infty.
	\end{equation*}
	Moreover, the least constant $C$ such that \eqref{krepelathm6b+} holds for all $g\in \mathfrak{M}^{+}$ satisfies $C \approx E_1 + E_2$.
\end{theorem}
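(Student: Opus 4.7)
The plan is to parallel the proof of Theorem \ref{thm44b} from \cite{gop}. First I would reduce to the case $t = 0$ by the substitution $z = y - t$ in each integral, with the weights relabelled accordingly; the conditions $E_1, E_2$ transform consistently, so it suffices to handle $t = 0$. For $t = 0$, I would combine duality in $L^p(v)$ with a dyadic discretization of the supremum: writing $G(y) := \int_y^\infty g(s)\,ds$ and $\psi_g(x) := \sup_{y \geq x} u(y) G(y)$, the function $\psi_g$ is non-increasing, so picking a covering sequence $\{x_k\}_{k \in \mathbb{Z}}$ adapted to $W(x) := \int_0^x w$ (e.g.\ $W(x_k) = 2^k$) lets me bound $\psi_g$ on each $[x_k, x_{k+1})$ by its value at a well-chosen point $y_k \geq x_k$. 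This turns \eqref{krepelathm6b+} into a discrete inequality of the form $\sum_k 2^k u(y_k) G(y_k) \lesssim \|g\|_{L^p(v)}$, which I would then rewrite via $L^{p'}$ duality together with the classical Hardy and Copson inequalities (Theorems \ref{thm.Hardy} and \ref{thm.Copson}) to obtain a pair of conditions on $u$, $v$, $w$.

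The two conditions $E_1$ and $E_2$ arise from splitting the discrete inequality into two regimes according to where the supremum is attained. In the first regime the sup is essentially attained near $x$, and applying H\"older in the form $u(y) G(y) \leq u(y) \bigl(\int_y^\infty v^{1-p'}\bigr)^{1/p'} \|g\|_{L^p(v)}$ followed by Theorem \ref{thm.Copson} yields $E_1$. In the second regime the sup moves far from $x$, and one must couple $w$ with a local supremum of $u$ on $[s,x]$, producing the term $\int_0^x [\sup_{y \in [s,x]} u(y)] w(s)\,ds$ characteristic of $E_2$. Necessity of each condition follows by testing \eqref{krepelathm6b+} on explicit functions such as $g = \chi_{(\tau,\infty)} v^{1-p'}$ with $\tau$ optimized against the two candidate expressions.

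The principal obstacle will be the sufficiency direction: proving that $E_1$ and $E_2$ together are enough to bound \eqref{krepelathm6b+}. This requires reversing the discretization via Theorem \ref{transfermon} to pass from sums back to integrals, together with careful control, using the non-increasing envelope $U(y) := \sup_{\tau \geq y} u(\tau)$, of how $u$ oscillates on each dyadic block. The emergence of the inner supremum $\sup_{y \in [s,x]} u(y)$ in $E_2$ (rather than the tail supremum $\sup_{\tau \geq y} u(\tau)$ that appears in $D_2$ of Theorem \ref{thm44b}) reflects precisely the switch from the Hardy integral $\int_0^y$ to the Copson integral $\int_y^\infty$, and this is the point at which the present proof diverges most noticeably from that of Theorem \ref{thm44b}.
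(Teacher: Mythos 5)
The paper proves this theorem in one line: for $t=0$ it is precisely \cite[Theorem~6]{krep2016}, and for $t>0$ the statement follows by changing variables $x\mapsto x+t$ in every integral, shifting the weights accordingly. Your opening reduction to $t=0$ is exactly the paper's observation, but you then propose to reprove K\v{r}epela's theorem from scratch rather than invoke it, which is a much longer road.

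Taken as a stand-alone program, your sketch is plausible at the level of generalities: the covering sequence with $W(x_k)=2^k$, the discrete inequality $\sum_k 2^k u(y_k) G(y_k) \lesssim \|g\|_{L^p(v)}$, the split into regimes according to where the supremum is attained, and the necessity-by-test-functions step are all the right kind of ingredients, and your observation that the local supremum $\sup_{y\in[s,x]}u(y)$ in $E_2$ is a genuine structural difference from the tail supremum in $D_2$ of Theorem~\ref{thm44b} is a sound piece of intuition. But the proposal is a program, not a proof. The part you yourself flag as ``the principal obstacle'' — showing that $E_1+E_2<\infty$ suffices — is precisely the technical heart of the theorem, and you do not offer a concrete mechanism for it. Moreover, the tool you reach for there, Theorem~\ref{transfermon}, is misplaced: that result is an identity for $\sup_{g:\int_0^x g\le\int_0^x f}\int gw$, used elsewhere in this paper to eliminate constraints of that shape inside $K$, not a device for passing from dyadic sums back to integrals. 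What the de-discretization step actually requires is the blocking/envelope machinery developed in \cite{krep2016}. So the outline points in the right direction, but the missing ingredient is exactly what makes the theorem nontrivial; in this paper's context the economical and correct move is what the authors do — reduce to $t=0$ by translation and cite \cite[Theorem~6]{krep2016}.
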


\begin{proof}
The statement was formulated in \cite[Theorem 6]{krep2016} for $t = 0$. The proof directly follows by using change of variables of the type $x + t = y$ several times when $t > 0$.
\end{proof}

Investigation of weighted iterated Hardy-type inequalities started with studying of the inequality
\begin{equation}\label{mainn0}
\bigg( \int_0^{\infty} \bigg(\int_0^t \bigg( \int_s^{\infty} h(y)\,dy\bigg)^m u(s)\,ds \bigg)^{\frac{q}{m}}w(t)\,dt\bigg)^{\frac{1}{q}} \le C \bigg(\int_0^{\infty} h(t)^pv(t)\,dt\bigg)^{\frac{1}{p}}, \qquad h \in \mp^+. 
\end{equation}
Note that inequality \eqref{mainn0} have been considered in the case $m=1$ in \cite{gop2009} (see also \cite{g1}), where the result was
presented without proof, and in the case $p=1$ in \cite{gjop} and \cite{ss}, where the special
type of weight function $v$ was considered. Recall that the inequality has been completely characterized	in \cite{GMP1} and \cite{GMP2} in the case $0<m<\infty$, $0<q\leq \infty$, $1 \le p < \infty$ by using discretization and anti-discretization methods. Another approach to get the characterization of inequality \eqref{mainn0} was presented in \cite{PS_Proc_2013}. The characterization of the inequality can be reduced to the characterization of the weighted Hardy	inequality on the cones of non-increasing functions (see, \cite{gog.mus.2017_1} and \cite{gog.mus.2017_2}). Different approach to solve iterated Hardy-type inequalities has been given in \cite{mus.2017}.

As it was mentioned in \cite{gog.mus.2017_1} the characterization of "dual" inequality
\begin{equation}\label{iterH2}
\bigg( \int_0^{\infty} \bigg(\int_t^{\infty} \bigg( \int_0^s h(y)\,dy\bigg)^m u(s)\,ds \bigg)^{\frac{q}{m}}w(t)\,dt\bigg)^{\frac{1}{q}} \le C \bigg(\int_0^{\infty} h(t)^pv(t)\,dt\bigg)^{\frac{1}{p}}, \qquad h \in \mp^+.
\end{equation}
can be easily obtained from the solutions of inequality \eqref{mainn0}, which was presented in \cite{GKPS}. 

\begin{theorem}\cite[Theorem 2.9, (a) and (c)]{GKPS}\label{gks} 
	Let $p,\,q,\,m \in (1,\infty)$ and $u,\,w,\,v \in \W (0,\infty)$. Assume that the following non-degeneracy conditions are satisfied:
	\begin{itemize}
		\item 
		$u$ is strictly positive, $\int_t^{\infty} u(s)\,ds < \infty$ for all $t \in (0,\infty)$, $\int_0^{\infty} u(s)\,ds = \infty$, 
		
		\item 
		$\int_0^t w(s)\,ds < \infty$, $\int_t^{\infty} w(s) \bigg( \int_s^{\infty} u(y)\,dy\bigg)^{\frac{q}{m}}\,ds < \infty$ for all $t \in (0,\infty)$,
		
		\item 
		$\int_0^1 w(s) \bigg( \int_s^{\infty} u(y)\,dy\bigg)^{\frac{q}{m}}\,ds = \infty$, $\int_1^{\infty} w(s)\,ds = \infty$.
	\end{itemize}
	Let
	$$
	C = \sup_{h \in \mp^+ } \frac{\bigg( \int_0^{\infty} \bigg(\int_t^{\infty} \bigg( \int_0^s h(y)\,dy\bigg)^m u(s)\,ds \bigg)^{\frac{q}{m}}w(t)\,dt\bigg)^{\frac{1}{q}}}{\bigg(\int_0^{\infty} h(t)^pv(t)\,dt\bigg)^{\frac{1}{p}}}
	$$
	
	{\rm (a)} If $p \le \min\{ m,\,q\}$, then $C \approx F_1 + F_2$, where
	$$
	F_1 = \sup_{t \in (0,\infty)} \bigg( \int_0^t w(s) \,ds \bigg)^{\frac{1}{q}}  \, \bigg( \int_t^{\infty} u(s)\,ds \bigg)^{\frac{1}{m}} \bigg(\int_0^t v(\tau)^{1-p'}\,d\tau\bigg)^{\frac{1}{p'}}
	$$
	and
	$$
	F_2 = \sup_{t \in (0,\infty)} \bigg( \int_t^{\infty} \bigg( \int_s^{\infty} u(y)\,dy \bigg)^{\frac{q}{m}} w(s) \, ds \bigg)^{\frac{1}{q}} \bigg(\int_0^t v(s)^{1-p'}\,ds\bigg)^{\frac{1}{p'}}.
	$$
	
	{\rm (b)} If $m < p \le q$, then $C \approx F_2 + F_3$, where
	$$
	F_3 = \sup_{t \in (0,\infty)} \bigg( \int_0^t w(s) \,ds \bigg)^{\frac{1}{q}} \bigg(\int_t^{\infty} \bigg( \int_s^{\infty} u(y)\,dy \bigg)^{\frac{p}{p - m}} \bigg( \int_0^s v(\tau)^{1 - p'}\,d\tau\bigg)^{\frac{p(m-1)}{p - m}} v(s)^{1 - p'}\,ds\bigg)^{\frac{p - m}{pm}}.
	$$
\end{theorem}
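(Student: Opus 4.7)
The plan is to exploit the observation made just before the theorem statement and reduce the characterization of \eqref{iterH2} to the already-understood characterization of \eqref{mainn0} via a reciprocal change of variables. Concretely, I would (i) transform \eqref{iterH2} into an instance of \eqref{mainn0} with explicitly constructed transformed weights; (ii) verify that the non-degeneracy hypotheses translate correctly; (iii) apply the cited characterization of \eqref{mainn0} from \cite{GKPS} and back-substitute to recover the expressions $F_1$, $F_2$, $F_3$ in the original weights.

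For step (i), I would substitute $t=1/T$, $s=1/S$, $y=1/Y$. Setting $\tilde h(Y):=Y^{-2}h(1/Y)$, the innermost integral becomes $\int_0^s h(y)\,dy = \int_{1/s}^{\infty}\tilde h(Y)\,dY$. Setting $\tilde u(S):=S^{-2}u(1/S)$, the middle integral becomes $\int_t^{\infty}\bigl(\int_0^s h\bigr)^m u(s)\,ds = \int_0^{1/t}\bigl(\int_S^{\infty}\tilde h\bigr)^m \tilde u(S)\,dS$. Setting $\tilde w(T):=T^{-2}w(1/T)$ and $\tilde v(T):=T^{2p-2}v(1/T)$, the outermost integrations transform analogously, so that \eqref{iterH2} for $(h,u,v,w)$ with constant $C$ is equivalent to \eqref{mainn0} for $(\tilde h,\tilde u,\tilde v,\tilde w)$ with the same constant $C$. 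A direct check shows that the non-degeneracy hypotheses on $(u,v,w)$ listed in the theorem translate to the corresponding non-degeneracy hypotheses required for \eqref{mainn0}; for instance $\int_0^T \tilde u(S)\,dS = \int_{1/T}^{\infty}u(s)\,ds$ and $\int_0^{\infty}\tilde u = \int_0^{\infty}u$, so the assumption that $\int_t^{\infty}u<\infty$ for all $t$ together with $\int_0^{\infty}u=\infty$ becomes precisely the needed non-degeneracy of $\tilde u$, and similarly for the conditions involving $w$.

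For step (iii), the parameter ranges $p\le\min\{m,q\}$ and $m<p\le q$ correspond to exactly the two cases of the GKPS characterization of \eqref{mainn0} that produce clean supremum-type expressions. Applying the cited result yields conditions written in the tilde variables; reversing the substitution $S=1/s$, $T=1/t$ then converts integrals over $(0,t)$ into integrals over $(1/t,\infty)$ and conversely, and quantities like $\int_0^s\tilde u$ into $\int_{1/s}^{\infty}u$. A final re-parametrization $t\mapsto 1/t$ of the outer supremum or integral produces exactly $F_1$, $F_2$, $F_3$. The main obstacle I anticipate is the patient bookkeeping in this last step: one must verify that each quantity appearing in the tilde-variable characterization matches, after the reciprocal substitution, one of the advertised expressions with the correct exponents and weight combinations, paying particular attention to the factors of the form $T^{2p-2}$ that come from $\tilde v$ (these conspire with factors $(1-p')$-powers to cancel cleanly). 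No new analytic tool is required beyond the cited result, which is consistent with the authors' remark that the dual inequality is ``easily obtained'' from the solutions of \eqref{mainn0}.
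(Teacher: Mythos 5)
The paper does not actually prove this statement; it imports it from \cite{GKPS}, preceded only by the remark that the ``dual'' inequality \eqref{iterH2} ``can be easily obtained from the solutions of inequality \eqref{mainn0}.'' Your reciprocal change of variables is exactly that reduction, carried out correctly: with $\tilde h(Y)=Y^{-2}h(1/Y)$, $\tilde u(S)=S^{-2}u(1/S)$, $\tilde w(T)=T^{-2}w(1/T)$ and $\tilde v(T)=T^{2p-2}v(1/T)$, inequality \eqref{iterH2} for $(h,u,v,w)$ becomes \eqref{mainn0} for $(\tilde h,\tilde u,\tilde v,\tilde w)$ with the same best constant, and the listed non-degeneracy hypotheses transport via identities such as $\int_0^T\tilde u\,dS=\int_{1/T}^{\infty}u\,ds$ and $\int_T^{\infty}\tilde w\,dS=\int_0^{1/T}w\,ds$. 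The $v$-bookkeeping you flag as the main risk also works out cleanly: since $(2p-2)(1-p')=-2$, one has $\tilde v(S)^{1-p'}=S^{-2}v(1/S)^{1-p'}$, so $\int_T^{\infty}\tilde v^{1-p'}\,dS=\int_0^{1/T}v^{1-p'}\,d\tau$, and back-substituting $T=1/t$ in the GKPS characterization of \eqref{mainn0} reproduces $F_1$, $F_2$, $F_3$ term by term. So your route is correct and is precisely the one the paper gestures at without spelling out; the only thing left implicit in your write-up is the explicit statement of the GKPS conditions for \eqref{mainn0} in the tilde variables, which is pure bookkeeping.
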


Another pair of "dual" weighted iterated Hardy-type inequalities are
\begin{equation}\label{iterH1}
\bigg( \int_0^{\infty} \bigg(\int_t^{\infty} \bigg( \int_s^{\infty} h(y)\,dy\bigg)^m u(s)\,ds \bigg)^{\frac{q}{m}}w(t)\,dt\bigg)^{\frac{1}{q}} \le C \bigg(\int_0^{\infty} h(t)^pv(t)\,dt\bigg)^{\frac{1}{p}}, \qquad h \in \mp^+ 
\end{equation}
and 
\begin{equation}\label{iterH3}
\bigg( \int_0^{\infty} \bigg(\int_0^t \bigg( \int_0^s h(y)\,dy\bigg)^m u(s)\,ds \bigg)^{\frac{q}{m}}w(t)\,dt\bigg)^{\frac{1}{q}} \le C \bigg(\int_0^{\infty} h(t)^pv(t)\,dt\bigg)^{\frac{1}{p}}, \qquad h \in \mp^+.
\end{equation}
Both of them were characterized in \cite{gog.mus.2017_1} by so-called "flipped" conditions. The "classical" conditions ensuring the validity of \eqref{iterH1} was recently presented in \cite{krepick}.

\begin{theorem}\cite[Theorem 1.1, (a) and (c)]{krepick}\label{krepick} 
	Let $p,\,q,\,m \in (1,\infty)$ and $u,\,w,\,v$ be weights such that the pair $(u,w)$ is admissible with respect to $(m.q)$, that is,
	$$
	0 < \int_0^t \bigg(\int_s^t u(y)\,dy\bigg)^{\frac{q}{m}} w(s)\,ds < \infty, \qquad t \in (0,\infty). 
	$$
	Let
	$$
	C = \sup_{h \in \mp^+ } \frac{\bigg( \int_0^{\infty} \bigg(\int_t^{\infty} \bigg( \int_s^{\infty} h(y)\,dy\bigg)^m u(s)\,ds \bigg)^{\frac{q}{m}}w(t)\,dt\bigg)^{\frac{1}{q}}}{\bigg(\int_0^{\infty} h(t)^pv(t)\,dt\bigg)^{\frac{1}{p}}}
	$$
	
	{\rm (a)} If $p \le \min\{m,\,q\}$, then $C \approx G_1$, where
	$$
	G_1 = \sup_{t \in (0,\infty)} \bigg( \int_0^t w(s) \, \bigg( \int_s^t u(y)\,dy \bigg)^{\frac{q}{m}} ds \bigg)^{\frac{1}{q}} \bigg(\int_t^{\infty} v(s)^{1-p'}\,ds\bigg)^{\frac{1}{p'}}.
	$$
	
	{\rm (b)} If $m < p \le q$, then $C \approx G_1 + G_2$, where
	$$
	G_2 = \sup_{t \in (0,\infty)} \bigg( \int_0^t w(s) \,ds \bigg)^{\frac{1}{q}} \bigg(\int_t^{\infty} \bigg( \int_t^s u(y)\,dy \bigg)^{\frac{m}{p - m}} u(s) \bigg( \int_s^{\infty} v(\tau)^{1 - p'}\,d\tau\bigg)^{\frac{m(p-1)}{p - m}}\,ds\bigg)^{\frac{p - m}{pm}}.
	$$
\end{theorem}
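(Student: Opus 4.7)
The plan is to reduce the iterated inequality \eqref{iterH1} to a family of classical weighted Copson inequalities characterized by Theorem \ref{thm.Copson}, and then to optimize over an auxiliary weight to recover the conditions $G_1$ (and, in case (b), also $G_2$). Setting $H(s) := \int_s^\infty h(y)\,dy$, and assuming first the main regime $q \ge m$, I would dualize the outer $L^{q/m}(w,dt)$-norm against $L^{(q/m)'}(w^{1-(q/m)'})$, then apply Fubini's theorem to obtain
$$\big(\text{LHS of \eqref{iterH1}}\big)^m = \sup_\phi \int_0^\infty H(s)^m u(s)\Phi(s)\,ds, \qquad \Phi(s) := \int_0^s \phi(r)\,dr,$$
where the supremum runs over $\phi \ge 0$ normalized in $L^{(q/m)'}(w^{1-(q/m)'})$. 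For each such $\phi$, the right-hand side is the $m$-th power of the $L^m(u\Phi)$-norm of the Copson average of $h$ bounded by $L^p(v)$, which is a classical one-level weighted Copson inequality.

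In case (a) of Theorem \ref{krepick} we have $p \le m$, so part (a) of Theorem \ref{thm.Copson} yields a supremum characterization for the best constant $A_\phi$ of the inner inequality. Rewriting the arising quantity $\int_0^t u(s)\Phi(s)\,ds$ by Fubini as $\int_0^t \phi(r)\int_r^t u(s)\,ds\,dr$ and dualizing once more in the variable $r$, the remaining supremum over $\phi$ of $A_\phi^m$ collapses into exactly $G_1^m$, giving sufficiency of $G_1 < \infty$. In case (b) with $m < p$, the inner Copson inequality requires part (b) of Theorem \ref{thm.Copson}, whose integral characterization, processed through the same duality chain, splits after integration by parts into a boundary-type term that reduces to $G_1^m$ and a bulk-type term that reduces to $G_2^m$. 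Necessity of each condition is recovered by inserting extremal test functions: for $G_1$, a function of the form $h \sim v^{1-p'}\chi_{[t,T]}$ saturates the inner Hölder step; for $G_2$, a Maz'ya-style layer-cake built from level sets of $v^{1-p'}$ calibrated against $u$ and $w$ is required.

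The main obstacle in this plan is that the outer $L^{q/m}$-duality is available only when $q \ge m$, whereas case (a) also includes the subregime $p \le q < m$. Here I would replace the outer duality by Minkowski's integral inequality to interchange the $L^{q/m}(w)$-norm with the inner $L^m(u)$-norm, reducing the problem to a single weighted Copson inequality whose best constant is again captured by $G_1$; the matching lower bound would come from the same family of extremal test functions as above. A second subtle point is justifying the interchanges of suprema in the reduction; the admissibility hypothesis on $(u,w)$ stated in the theorem is precisely what ensures that the intermediate quantities $\int_0^t u(s)\Phi(s)\,ds$ are finite for all admissible $\phi$ and that the chain of dualities is well-defined.
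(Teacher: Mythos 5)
This theorem is quoted in the paper as background from \cite{krepick}; the paper itself gives no proof, so there is no in-paper argument to compare against. Judged on its own terms, your duality reduction (dualize the outer $L^{q/m}(w)$-norm, Fubini to replace $\int_t^\infty(\cdot)\,ds$ by $\Phi(s)=\int_0^s\phi$, apply the one-level Copson characterization, then dualize once more in the $r$-variable) is sound and yields $G_1$ precisely in case (a) under the extra assumption $m\le q$, and this is a legitimate way in. However, there are two genuine gaps.

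First, case (a) also contains the subregime $p\le q<m$, where $q/m<1$ and the outer duality is unavailable. Your proposed replacement — Minkowski's integral inequality — goes the wrong way: for exponent $r=q/m<1$ the inequality reverses, so interchanging the $L^{q/m}(w)$ and $L^m(u)$ norms gives a lower bound for the left-hand side, not an upper bound. Even if one establishes a two-sided interchange by other means (e.g.\ a level-set or integration-by-parts identity such as Theorem \ref{thm.IBP.0}), the resulting one-level condition is $\sup_t\big(\int_0^t u(s)(\int_0^s w)^{m/q}\,ds\big)^{1/m}\big(\int_t^\infty v^{1-p'}\big)^{1/p'}$, which is not literally $G_1$; an additional nontrivial equivalence lemma comparing $\int_0^t u(s)(\int_0^s w)^{m/q}\,ds$ with $\big(\int_0^t w(s)(\int_s^t u)^{q/m}\,ds\big)^{m/q}$ is needed and is not mentioned. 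Second, in case (b) the reduction does not close as claimed. After applying Theorem \ref{thm.Copson}(b) the best constant $C_\phi$ is an integral condition in which $\Phi$ appears twice and nonlinearly, through $\int_0^x u\Phi$ raised to the power $m/(p-m)$ and through the factor $u(x)\Phi(x)$. Integration by parts (Theorem \ref{thm.IBP.0}) converts this to $\int_{(0,\infty)}\big(\int_0^x u\Phi\big)^{p/(p-m)}d[-f(x)]$ with $f(x)=\big(\int_x^\infty v^{1-p'}\big)^{m(p-1)/(p-m)}$, and since $\int_0^x u\Phi=\int_0^x\phi(r)\big(\int_r^x u\big)\,dr$, the supremum over $\phi$ is the best constant in a weighted Hardy-type inequality with Oinarov kernel $K(r,x)=\int_r^x u$, not a plain Hardy or Copson operator. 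Characterizing that requires its own machinery and produces two conditions, not one, which is why the theorem's answer is $G_1+G_2$; dismissing this as ``the same duality chain splits after integration by parts'' leaves the real work undone.
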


We will apply the following "gluing" lemma.
\begin{lemma}\cite[Lemma 2.7]{gogmusunv}\label{gluing.lem.0} 
	Let $\alpha$ and $\beta$ be positive numbers. Suppose that $g,\,h \in \M^+ $ and $a \in \W (0,\infty)$ is non-decreasing. Then
	\begin{align*}
		\esup_{x \in (0,\infty)} \bigg( \int_0^{\infty} \bigg(\frac{a(x)}{a(x) + a(t)}\bigg)^{\beta} g(t)\,dt \bigg)^{\frac{1}{\beta}} \bigg(\int_0^{\infty} \bigg(\frac{a(t)}{a(x) + a(t)}\bigg)^{\alpha} h(t)\,dt\bigg)^{\frac{1}{\alpha}} & \\
		& \hspace{-7cm} \ap \esup_{x \in (0,\infty)} \bigg( \int_0^x g(t)\,dt\bigg)^{\frac{1}{\beta}} \bigg(\int_x^{\infty} h(t)\,dt\bigg)^{\frac{1}{\alpha}} + \esup_{x \in (0,\infty)} \bigg( \int_x^{\infty} a(t)^{-\beta} g(t)\,dt \bigg)^{\frac{1}{\beta}}  \bigg( \int_0^x a(t)^{\alpha}h(t)\,dt\bigg)^{\frac{1}{\alpha}}.
	\end{align*}
\end{lemma}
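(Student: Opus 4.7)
The plan is to reduce the two weighted inner integrals on the left-hand side to simpler split-integral forms, then expand the resulting product of binomials and analyze the four terms. First, fix $x \in (0,\infty)$ and split each inner integral at $t = x$. Since $a$ is non-decreasing, $a(x)+a(t) \approx a(x)$ when $t \le x$ and $a(x)+a(t) \approx a(t)$ when $t \ge x$, so
\begin{align*}
\bigg(\int_0^{\infty}\bigg(\frac{a(x)}{a(x)+a(t)}\bigg)^{\beta} g(t)\, dt\bigg)^{\!1/\beta} &\approx A_1(x) + A_2(x), \\
\bigg(\int_0^{\infty}\bigg(\frac{a(t)}{a(x)+a(t)}\bigg)^{\alpha} h(t)\, dt\bigg)^{\!1/\alpha} &\approx B_1(x) + B_2(x),
\end{align*}
where $A_1(x) := (\int_0^x g)^{1/\beta}$, $A_2(x) := a(x)(\int_x^{\infty} a^{-\beta} g)^{1/\beta}$, $B_1(x) := a(x)^{-1}(\int_0^x a^{\alpha} h)^{1/\alpha}$, and $B_2(x) := (\int_x^{\infty} h)^{1/\alpha}$, with constants depending only on $\alpha,\beta$.

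Multiplying the two equivalences, the integrand of the left-hand side is $\approx (A_1+A_2)(B_1+B_2) = A_1 B_1 + A_1 B_2 + A_2 B_1 + A_2 B_2$, and the anti-diagonal cross-products $A_1 B_2$ and $A_2 B_1$ are precisely the integrands of the two suprema on the right-hand side. Since $(A_1+A_2)(B_1+B_2) \ge A_1 B_2 + A_2 B_1$ pointwise, taking $\esup_x$ yields the $\gtrsim$ direction immediately.

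For the reverse direction it remains to control the diagonal cross-terms, i.e.\ to show
\[\esup_x\bigl[A_1(x) B_1(x) + A_2(x) B_2(x)\bigr] \lesssim \esup_x (A_1 B_2)(x) + \esup_x (A_2 B_1)(x).\]
The tools I would use are: (i) the monotonicity of $A_1$ (non-decreasing) and $B_2$ (non-increasing); (ii) auxiliary bounds $B_1(x) \lesssim B_1(y) + B_2(y)$ for $y \le x$ and $A_2(x) \lesssim A_1(y) + A_2(y)$ for $y \ge x$, obtained by splitting the defining integrals of $B_1$ and $A_2$ at $y$ and using the monotonicity of $a$ on each piece; and (iii) the pointwise identity $(A_1 B_1)(x)\cdot(A_2 B_2)(x) = (A_1 B_2)(x)\cdot(A_2 B_1)(x)$. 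For each $x$ I would then distinguish four cases according to the signs of $A_1(x)-A_2(x)$ and $B_1(x)-B_2(x)$. In three of the four cases, one of the diagonal products is dominated at $x$ by an anti-diagonal one (for example $A_1(x) \le A_2(x)$ gives $A_1 B_1(x) \le A_2 B_1(x)$, which is bounded by $\esup_x(A_2 B_1)$). The delicate case is $A_1(x) > A_2(x)$ \emph{and} $B_1(x) > B_2(x)$, in which both diagonal products dominate both anti-diagonal ones at the same point; here I would use the auxiliary bounds in (ii) to transport the estimate to a comparison point $y = y(x)$ chosen as a balance point (where, say, $A_1(y) \approx A_2(y)$), at which one of the easy sub-cases applies. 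I expect this last transfer — juggling four functionals that are only weakly monotone — to be the main technical obstacle.
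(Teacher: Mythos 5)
The paper does not prove this lemma; it is imported verbatim from \cite{gogmusunv}, so there is no internal proof to compare against. I therefore assess your proposal on its own terms.

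Your setup is sound: splitting the inner integrals at $t=x$ using monotonicity of $a$ gives the decomposition into $A_1+A_2$ and $B_1+B_2$; the anti-diagonal products $A_1B_2$ and $A_2B_1$ are exactly the two right-hand side functionals; the $\gtrsim$ direction follows from $(A_1+A_2)(B_1+B_2)\ge A_1B_2+A_2B_1$ together with $\esup(f+g)\ge\tfrac12(\esup f+\esup g)$; and the two auxiliary bounds in (ii) check out by splitting the defining integrals at $y$ and using the monotonicity of $a$. So the framework is right.

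However there is a genuine gap, which you yourself flag, and I think it is larger than you suggest. Two points. First, your case count is off: the case $A_1(x)<A_2(x)$ and $B_1(x)<B_2(x)$ is equally hard, since there $A_2(x)B_2(x)$ dominates both anti-diagonals; your illustration ($A_1\le A_2$ gives $A_1B_1\le A_2B_1$) bounds only $A_1B_1$ and leaves $A_2B_2$ untouched. This case is the $(g,\beta)\leftrightarrow(h,\alpha)$ mirror image of the one you isolate, so it reduces to it by symmetry, but that needs saying; and in the case you do isolate it is not true that ``both diagonal products dominate both anti-diagonal ones'' --- the identity $A_1B_1\cdot A_2B_2=A_1B_2\cdot A_2B_1$ forces exactly one diagonal to dominate and the other to be dominated. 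Second, and more seriously, the proposed balance-point transfer runs into a directionality problem: your bound $B_1(x)\lesssim B_1(y)+B_2(y)$ holds only for $y\le x$, and after multiplying by $A_1(x)$ you obtain $A_1(x)B_1(x)\lesssim A_1(x)B_1(y)+A_1(x)B_2(y)$; but $A_1$ is non-decreasing, so $A_1(x)\ge A_1(y)$, and what $E$ and $F$ control are $A_1(y)B_2(y)$ and $A_2(y)B_1(y)$, not the quantities with $A_1(x)$ in front. Choosing $y<x$ with $B_1(y)\approx B_2(y)$ (which does exist under mild continuity) therefore does not close the loop. You have also not used the structural facts that $A_1^{\beta}+A_2^{\beta}$ is non-decreasing and $B_1^{\alpha}+B_2^{\alpha}$ is non-increasing (both follow by a one-line differentiation from $a'\ge0$); these are almost certainly needed in any completed argument, and even with them in hand the hard case requires a further idea beyond what you have written.
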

We recall the following "an integration by parts" formula. 
\begin{theorem}\cite[Theorem 2.1]{musbil_2}\label{thm.IBP.0} 
	Let $\alpha > 0$. Let $g$ be a non-negative function on $(0,\infty)$ such that $0 < \int_0^t g < \infty$ for all $t \in (0,\infty)$ and let $f$ be a non-negative non-increasing right-continuous function on $(0,\infty)$. Then
	\begin{align*}
		A_1 : = \int_0^{\infty} \bigg( \int_0^t g \bigg)^{\alpha} g(t) [f(t) - \lim_{t \rw +\infty} f(t)]\,dt < \infty \quad \Longleftrightarrow \quad A_2 : = \int_{(0,\infty)} \bigg( \int_0^t g \bigg)^{\alpha + 1}\,d[-f(t)] < \infty.
	\end{align*}
	Moreover, $A_1 \approx A_2$.
\end{theorem}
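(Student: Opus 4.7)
The plan is to evaluate $A_1$ as a double integral and swap the order of integration via Tonelli's theorem; the resulting inner integral is elementary and yields $A_2$ up to a multiplicative constant, so that in fact $A_1\approx A_2$ with an explicit comparison constant.

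The first step is the distributional identity
\begin{equation*}
f(t)-\lim_{s\to+\infty}f(s)=\int_{(t,\infty)}d[-f(s)],\qquad t\in(0,\infty),
\end{equation*}
which holds because $-f$ is non-decreasing and right-continuous and therefore induces a non-negative Lebesgue--Stieltjes measure $d[-f]$ on $(0,\infty)$ whose value on $(a,b]$ is $f(a)-f(b)$; passing $b\to\infty$ (and using that $\lim_{s\to+\infty}f(s)$ exists and is finite because $f\ge 0$ is non-increasing) gives the identity. Write $G(t):=\int_0^t g(s)\,ds$. Substituting the identity into $A_1$, all integrands become non-negative and measurable, so Tonelli's theorem justifies the interchange of the order of integration:
\begin{equation*}
A_1=\int_0^{\infty}G(t)^{\alpha}g(t)\int_{(t,\infty)}d[-f(s)]\,dt=\int_{(0,\infty)}\Biggl(\int_0^s G(t)^{\alpha}g(t)\,dt\Biggr)d[-f(s)].
\end{equation*}

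The inner integral is evaluated directly. Because $g\ge 0$ with $0<G(t)<\infty$ for every $t>0$, the function $G$ is absolutely continuous on bounded subintervals of $(0,\infty)$ with $G'=g$ almost everywhere and $G(0^{+})=0$. Hence $G(t)^{\alpha}g(t)=\tfrac{1}{\alpha+1}\tfrac{d}{dt}G(t)^{\alpha+1}$ a.e., so $\int_0^{s}G(t)^{\alpha}g(t)\,dt=\tfrac{1}{\alpha+1}G(s)^{\alpha+1}$, and substituting this back gives
\begin{equation*}
A_1=\frac{1}{\alpha+1}\int_{(0,\infty)}G(s)^{\alpha+1}\,d[-f(s)]=\frac{1}{\alpha+1}\,A_2.
\end{equation*}
In particular $A_1<\infty\Longleftrightarrow A_2<\infty$ and $A_1\approx A_2$. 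I expect no substantive obstacle: the approach deliberately bypasses the usual integration-by-parts route (which would require controlling the boundary terms $[f(a)-f(\infty)]G(a)^{\alpha+1}$ at $a\to 0^{+}$ and $[f(b)-f(\infty)]G(b)^{\alpha+1}$ at $b\to\infty$) by encoding $f(t)-\lim f$ as a tail integral of $d[-f]$ from the outset, so that those boundary contributions never need to be separately estimated.
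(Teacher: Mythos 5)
This theorem is quoted in the paper from \cite{musbil_2} without an in-paper proof, so there is nothing here to compare against directly; I assess your argument on its own merits. Your proof is correct, and it in fact establishes the sharp identity $A_1 = \tfrac{1}{\alpha+1}A_2$ as an equality in $[0,\infty]$, which is stronger than the stated two-sided comparison $A_1\approx A_2$. Every step is sound. Since $f$ is non-negative, non-increasing and right-continuous, $-f$ is non-decreasing and right-continuous, so $d[-f]$ is a non-negative Borel measure on $(0,\infty)$ with $d[-f]\big((a,b]\big)=f(a)-f(b)$; it is $\sigma$-finite because $(0,\infty)=\bigcup_n(1/n,n]$ with each piece of finite mass, and since $f\ge 0$ is monotone the limit $f(+\infty):=\lim_{s\to\infty}f(s)$ exists and is finite, giving the tail identity $f(t)-f(+\infty)=\int_{(t,\infty)}d[-f(s)]$. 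Tonelli then applies to the non-negative integrand $(t,s)\mapsto G(t)^\alpha g(t)\chi_{\{t<s\}}$ on the $\sigma$-finite product of Lebesgue measure with $d[-f]$. Finally, $G(t)=\int_0^t g$ is absolutely continuous on $[0,s]$ with $G'=g$ a.e.\ and $G(0^+)=0$ (because $g\in L^1(0,1)$ by hypothesis), and for $\alpha>0$ the map $x\mapsto x^{\alpha+1}$ is Lipschitz on the bounded interval $[0,G(s)]$, so $G^{\alpha+1}$ is absolutely continuous there and $\int_0^s G^\alpha g\,dt=\tfrac{1}{\alpha+1}G(s)^{\alpha+1}$. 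Your closing observation is also exactly right: by encoding $f(t)-f(+\infty)$ as a tail mass of $d[-f]$ from the outset, the Fubini route makes the boundary terms at $0^+$ and $+\infty$ disappear, and it handles the degenerate possibilities $G(\infty)=\infty$ and $f(0^+)=\infty$ automatically, which a naive Lebesgue--Stieltjes integration by parts on $[a,b]$ followed by $a\to 0^+$, $b\to\infty$ would have to treat by hand.
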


We are going to make use of the following remark in order to shorten some calculations in the proofs.
\begin{remark}\label{rem}
	Let $w \in \W (0,\infty)$ and $F$ be any non-negative continuous function on $(0,\infty)$. 
	
	Since 
	\begin{equation*}
	\sup_{x \le \tau} F(\tau) \chi_{(0,t]}(\tau)  = \left\{\begin{array}{cl}
	\sup_{x \le \tau \le t} F(\tau) & \quad \mbox{for}\quad x \le t, \\
	0 & \quad \mbox{for}\quad t < x,
	\end{array}
	\right.
	\end{equation*}
	for any $0 < x,\,t < \infty$, then
	\begin{align*}
	\int_0^{\infty} \bigg( \sup_{x \le \tau} F(\tau) \chi_{(0,t]}(\tau) \bigg) w(x)\,dx \approx \int_0^{t} \bigg( \sup_{x \le \tau \le t}F(\tau) \bigg) \, w(x)\,dx
	\end{align*}
	holds for $0 < t < \infty$.
	
	Similarly, since 
	\begin{equation*}
	\sup_{x \le \tau} F(\tau) \chi_{[t, \infty)}(\tau)  = \left\{\begin{array}{cl}
	\sup_{t \le \tau} F(\tau) & \quad \mbox{for}\quad x \le t, \\
	\sup_{x \le \tau} F(\tau) & \quad \mbox{for}\quad t < x,
	\end{array}\right.
	\end{equation*}
	for any $0 < x,\,t < \infty$, then
	\begin{align*}
	\int_0^{\infty} \bigg( \sup_{x \le \tau} F(\tau) \chi_{[t,\infty)}(\tau) \bigg) w(x)\,dx & \\ 
	& \hspace{-4cm} \approx	\bigg( \sup_{t \le \tau}  F(\tau) \bigg) \int_0^{t} \, w(x)\, dx + \int_t^{\infty} \bigg( \sup_{x \le \tau} F(\tau) \bigg) \, w(x)\,dx 
	\end{align*}
	holds for $0 < t < \infty$.	
\end{remark}

%%%%%%%%%%%%%%%%%%%%%%%%%%%%%%%%%%%%%%%%%%%%%%%%%%%%%%%%%%%%%%%%%%%%%%%%%%%%%%%%%%%%%%%%%%%%%%%%%%%%%%%%%%%%%%%%%%%%%%%%%%%%%%%%%%%%%%%%%%%%%%%%%%%%%%%%%%
%%%%%%%%%%%%%%%%%%%%%%%%%%%%%%%%%%%%%%%%%%%%%%%%%%%%%%%%%%%%%%%%%%%%%%%%%%%%%%%%%%%%%%%%%%%%%%%%%%%%%%%%%%%%%%%%%%%%%%%%%%%%%%%%%%%%%%%%%%%%%%%%%%%%%%%%%%
%%%%%%%%%%%%%%%%%%%%%%%%%%%%%%%%%%%%%%%%%%%%%%%%%%%%%%%%%%%%%%%%%%%%%%%%%%%%%%%%%%%%%%%%%%%%%%%%%%%%%%%%%%%%%%%%%%%%%%%%%%%%%%%%%%%%%%%%%%%%%%%%%%%%%%%%%%
%%%%%%%%%%%%%%%%%%%%%%%%%%%%%%%%%%%%%%%%%%%%%%%%%%%%%%%%%%%%%%%%%%%%%%%%%%%%%%%%%%%%%%%%%%%%%%%%%%%%%%%%%%%%%%%%%%%%%%%%%%%%%%%%%%%%%%%%%%%%%%%%%%%%%%%%%%
%%%%%%%%%%%%%%%%%%%%%%%%%%%%%%%%%%%%%%%%%%%%%%%%%%%%%%%%%%%%%%%%%%%%%%%%%%%%%%%%%%%%%%%%%%%%%%%%%%%%%%%%%%%%%%%%%%%%%%%%%%%%%%%%%%%%%%%%%%%%%%%%%%%%%%%%%%
%%%%%%%%%%%%%%%%%%%%%%%%%%%%%%%%%%%%%%%%%%%%%%%%%%%%%%%%%%%%%%%%%%%%%%%%%%%%%%%%%%%%%%%%%%%%%%%%%%%%%%%%%%%%%%%%%%%%%%%%%%%%%%%%%%%%%%%%%%%%%%%%%%%%%%%%%%

\section{Characterizations of restricted inequalities for $T_{u,b}$}\label{s3}

We start this section with some historical remarks concerning restricted inequalities related to the operator $T_{u,b}$. 

The notation $\mp^+ ((0,\infty);\dn)$ is used to denote the subset of those functions from $\mf^+ (0,\infty)$ which are non-increasing on $(0,\infty)$.

Recall that the inequality 
\begin{equation}\label{Tub.thm.1.eq.1}
\|T_{u,b}f \|_{q,w,(0,\infty)} \le C \| f \|_{p,v,(0,\infty)}, \qquad f \in \mp^+ ((0,\infty);\dn)
\end{equation}
was characterized in \cite[Theorem 3.5]{gop} under condition
$$
\sup_{t \in (0,\infty)} \frac{u(t)}{B(t)} \int_0^t
\frac{b(\tau)}{u(\tau)}\,d\tau < \infty.
$$
However, the case when $0 < p \le 1 < q < \infty$ was not considered in \cite{gop}. It is also worth to mention that in the case when $1 < p < \infty$, $0 < q < p < \infty$, $q \neq 1$ \cite[Theorem 3.5]{gop} contains only discrete condition. In
\cite{gogpick2007} the new reduction theorem was obtained when $0 < p \le 1$, and this technique allowed to characterize inequality \eqref{Tub.thm.1.eq.1} when $b \equiv 1$, and in the case when $0 < q< p \le 1$, \cite{gogpick2007} contains only discrete condition. The complete characterizations of inequality  \eqref{Tub.thm.1.eq.1} for $0 < q \le \infty$, $0 < p \le \infty$ were given in \cite{GogMusISI} and \cite{musbil}. Using the results in  \cites{PS_Proc_2013,PS_Dokl_2013,PS_Dokl_2014,P_Dokl_2015}, another characterization of  \eqref{Tub.thm.1.eq.1}  was obtained  in  \cite{StepSham} and \cite{Sham}. 

Note that the inequality
\begin{equation*}
\bigg( \int_0^{\infty} \bigg( \int_0^x T_{u,b} f (t)\,dt\bigg)^q w(x)\,dx\bigg)^{\frac{1}{q}} \le C \, \bigg( \int_0^{\infty} f(x)^p v(x) \,dx\bigg)^{\frac{1}{p}}, \qquad f \in \mp^+ ((0,\infty);\dn) 
\end{equation*}
was characterized in \cite[Theorem 6.1]{musbil_2} for $1 < p,\,q < \infty$, where $w$ and $v$ are weight functions on  $(0,\infty)$.

Recall that the inequality
\begin{equation}\label{eq.0.1}
\bigg( \int_0^{\infty} \bigg( \int_0^x \big[ T_{u,b}f^* (t)\big]^r\,dt\bigg)^{\frac{q}{r}} w(x)\,dx\bigg)^{\frac{1}{q}} \le C \, \bigg( \int_0^{\infty} \bigg( \int_0^x [f^* (\tau)]^p\,d\tau \bigg)^{\frac{m}{p}} v(x)\,dx \bigg)^{\frac{1}{m}}, \qquad f \in \mp (\rn)
\end{equation}
was investigated in \cite[Theorems 3.3 and 3.4]{musbilyil} for $1 < m < p \le r < q < \infty$ or $1 < m \le r < \min\{p,q\} < \infty$, where $w$ and $v$ are appropriate weight functions on $(0,\infty)$.

In this section we give characterization of the inequality
\begin{equation}\label{eq.1.1}
\bigg( \int_0^{\infty}  \big[ T_{u,b}f^* (x)\big]^q \, w(x)\,dx\bigg)^{\frac{1}{q}} \le C \, \bigg( \int_0^{\infty} \bigg( \int_0^x [f^* (\tau)]^p\,d\tau \bigg)^{\frac{m}{p}} v(x)\,dx \bigg)^{\frac{1}{m}}, \qquad f \in \mp (\rn).
\end{equation}
As it was mentioned in \cite{musbilyil}, inequality \eqref{eq.0.1} for $r = q$ is a special case of inequality \eqref{eq.1.1}.

Denote the best constant in inequality \eqref{eq.1.1} by $K$, that is, 
$$
K : = \sup_{f \in \mp (\rn)} \frac{\bigg( \int_0^{\infty}\big[ T_{u,b}f^* (x)\big]^q w(x)\,dx\bigg)^{\frac{1}{q}}}
{\bigg( \int_0^{\infty} \bigg( \int_0^x [f^* (\tau)]^p\,d\tau \bigg)^{\frac{m}{p}} v(x)\,dx \bigg)^{\frac{1}{m}}} \equiv \sup_{f \in \mp (\rn)} \frac{\bigg( \int_0^{\infty}\big[ T_{u,b}f^* (x)\big]^q w(x)\,dx\bigg)^{\frac{1}{q}}}
{\|f\|_{\GG(p,m,v)}}.
$$	

\begin{lemma} \label{redlemma}
Let $0 < p < \infty$, $0 < m < \infty$, $1 < q < \infty$ and $b \in \W(0,\infty)$ be such that the function 
$B(t)$ satisfies  $0 < B(t) < \infty$ for every $t \in (0,\infty)$. Assume that $u \in \W(0,\infty) \cap C(0,\infty)$ and $v,\,w \in \W(0,\infty)$. 
Then we have
$$ 
K = \sup_{h:\, \int_0^x h \le \int_0^x w} \sup_{\vp \in \mathfrak{M}^{+}} \frac{1}{\|\vp\|_{q',h^{1-q'},(0,\infty)}}
\sup_{f \in \mp (\rn)} \frac{\int_0^{\infty} f^* (y) b(y) \int_y^{\infty} \vp(x) \frac{u(x)}{B(x)} \,dx \,dy}{\|f\|_{\GG(p,m,v)}}.
$$
\end{lemma}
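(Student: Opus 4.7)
The strategy is to convert $\|T_{u,b}f^*\|_{q,w,(0,\infty)}$ into a triple supremum over $f$, $h$, and $\varphi$ by combining Sinnamon's transfer identity with classical weighted $L^q$ duality. First, since $y\mapsto y^q$ is monotone increasing, we pull the power inside the defining supremum of $T_{u,b}$:
\[
\big[T_{u,b}f^*(x)\big]^q = \sup_{\tau \in [x,\infty)} \bigg(\frac{u(\tau)}{B(\tau)}\bigg)^q \bigg(\int_0^\tau f^*(s)b(s)\,ds\bigg)^q.
\]
Applying Theorem \ref{transfermon} with $f:=w$ and inner supremand $(u/B)^q\big(\int_0^\cdot f^*b\big)^q$ converts the corresponding integral against $w$ into a supremum over admissible $h$:
\[
\int_0^\infty [T_{u,b}f^*(x)]^q w(x)\,dx = \sup_{h:\int_0^x h\le\int_0^x w}\int_0^\infty h(\tau)\bigg(\frac{u(\tau)}{B(\tau)}\bigg)^q \bigg(\int_0^\tau f^*(s)b(s)\,ds\bigg)^q\,d\tau.
\]
Extracting $q$-th roots (which commutes with the monotone supremum over $h$) gives $\|T_{u,b}f^*\|_{q,w,(0,\infty)} = \sup_h \big\|\int_0^\cdot f^*b\big\|_{q,\,h(u/B)^q,(0,\infty)}$.

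Next, since $q>1$, the duality formula in weighted $L^q$ stated just after Theorem \ref{assosGG} produces
\[
\bigg\|\int_0^\cdot f^*b\bigg\|_{q,\,h(u/B)^q,(0,\infty)} = \sup_{\varphi \in \mf^+}\frac{\int_0^\infty \varphi(\tau) \int_0^\tau f^*(s)b(s)\,ds\,d\tau}{\|\varphi\|_{q',\,(h(u/B)^q)^{1-q'},(0,\infty)}}.
\]
Using $q(1-q')=-q'$, the dual weight simplifies to $h^{1-q'}(u/B)^{-q'}$. The change of variable $\varphi = \tilde\varphi\cdot u/B$, which is a bijection on $\mf^+$ since $u$ and $B$ are strictly positive, converts the denominator into $\|\tilde\varphi\|_{q',h^{1-q'},(0,\infty)}$, while Fubini rewrites the numerator:
\[
\int_0^\infty \tilde\varphi(\tau)\frac{u(\tau)}{B(\tau)} \int_0^\tau f^*(s)b(s)\,ds\,d\tau = \int_0^\infty f^*(y)b(y) \int_y^\infty \tilde\varphi(x)\frac{u(x)}{B(x)}\,dx\,dy.
\]
Renaming $\tilde\varphi$ back to $\varphi$, dividing by $\|f\|_{\GG(p,m,v)}$, and taking suprema over $f$, $h$ and $\varphi$ (in any order) reproduces the claimed formula for $K$.

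The main technical point is the rescaling $\varphi\mapsto\varphi\cdot u/B$ in the second step: after Sinnamon's transfer the factor $(u/B)^q$ is locked inside the $L^q$ weight, and the lemma's clean form (with denominator $\|\varphi\|_{q',h^{1-q'}}$) emerges only once this factor has been absorbed into the dual variable. The continuity and positivity hypotheses on $u$ and $B$ ensure this rescaling is well-defined on $\mf^+$.
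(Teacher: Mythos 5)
Your proof is correct and follows essentially the same route as the paper: Sinnamon's transfer theorem (Theorem~\ref{transfermon}) to pass from the $\sup_{\tau\ge x}$ integrated against $w$ to the supremum over admissible $h$, then weighted $L^q$ duality to introduce $\varphi$, then Fubini, then interchange of suprema. The only difference is cosmetic: in the duality step the paper takes $F(x)=\frac{u(x)}{B(x)}\int_0^x f^*b$ as the function and $h$ as the weight, which lands directly on the denominator $\|\varphi\|_{q',h^{1-q'}}$ with the factor $u/B$ already in the numerator, whereas you keep $G(x)=\int_0^x f^*b$ as the function and $h(u/B)^q$ as the weight, and then need the extra substitution $\varphi\mapsto\varphi\cdot u/B$ to absorb the $(u/B)$ power and reach the same expression. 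Your rescaling step is sound (and your appeal to positivity of $u$ is harmless, since where $u$ vanishes the dual weight $(u/B)^{-q'}$ blows up and forces $\varphi=0$ there anyway), but it can be avoided entirely by simply choosing the paper's grouping at the duality step.
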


\begin{proof}
Applying Theorem \ref{transfermon}, we have that
\begin{align*}
K = & \sup_{f \in \mp (\rn)} \frac{\bigg( \int_0^{\infty} \sup_{\tau \in [x,\infty)} \bigg[ \frac{u(\tau)}{B(\tau)} \int_0^{\tau} f^* (y) b(y) \,dy \bigg]^q w(x)\,dx\, \bigg)^{\frac{1}{q}}}{\|f\|_{\GG(p,m,v)}} \\ 
= & \sup_{f \in \mp (\rn)} \frac{1}{\|f\|_{\GG(p,m,v)}} \sup_{h:\, \int_0^x h \le \int_0^x w} \bigg(\int_{0}^{\infty} h(x) \bigg(\frac{u(x)}{B(x)} \int_0^{x} f^* (y) b(y)\,dy \bigg)^q \,dx \bigg)^{\frac{1}{q}}
\end{align*}

By duality and Fubini theorem, we get that
\begin{align*}
K = &\sup_{f \in \mp (\rn)} \frac{1}{\|f\|_{\GG(p,m,v)}} \sup_{h:\, \int_0^x h \le \int_0^x w}\sup_{\vp \in \mathfrak{M}^{+}} \frac{\int_0^{\infty} \vp(x) \frac{u(x)}{B(x)} \bigg(\int_0^x f^* (y) b(y)\,dy\bigg) \,dx}{\|\vp\|_{q',h^{1-q'},(0,\infty)}} \\
= & \sup_{f \in \mp (\rn)} \frac{1}{\|f\|_{\GG(p,m,v)}} \sup_{h:\, \int_0^x h \le \int_0^x w} \sup_{\vp \in \mathfrak{M}^{+}}
\frac{\int_0^{\infty} f^* (y) b(y) \int_y^{\infty} \vp(x) \frac{u(x)}{B(x)} \,dx \,dy}{\|\vp\|_{q',h^{1-q'},(0,\infty)}} 
\end{align*}

Interchanging the suprema yields that 
$$ 
K = \sup_{h:\, \int_0^x h \le \int_0^x w} \sup_{\vp \in \mathfrak{M}^{+}} \frac{1}{\|\vp\|_{q',h^{1-q'},(0,\infty)}}
\sup_{f \in \mp (\rn)} \frac{\int_0^{\infty} f^* (y) b(y) \int_y^{\infty} \vp(x) \frac{u(x)}{B(x)} \,dx \,dy}{\|f\|_{\GG(p,m,v)}}.
$$

This completes the proof.	
\end{proof}

\begin{theorem} \label{1stresult}
	Let	$0 < m \le 1, 0 < p \le 1, 1 < q < \infty$ and $b \in \W (0,\infty) \cap \mp^+ ((0,\infty);\dn)$ be such that the function $B(t)$ satisfies  $0 < B(t) < \infty$ for every $t \in (0,\infty)$. Suppose that $u \in \W(0,\infty) \cap C(0,\infty)$, $v \in \W_{m,p}(0,\infty)$ and $w \in \W(0,\infty)$.
	Then
	\begin{align*}
		K \approx & \, \sup_{t \in (0,\infty)} \frac{1}{v_1(t)^{\frac{1}{m}}} \bigg(\int_0^{t}\bigg( \sup_{\tau \in [x,t]} u(\tau)^q \bigg) \, w(x)\,dx\bigg) ^{\frac{1}{q}}
		\\
		& + \, \sup_{t \in (0,\infty)} \frac{B(t)}{v_1(t)^{\frac{1}{m}}} \bigg( \sup_{\tau \in [t,\infty)} \frac{u(\tau)}{B(\tau)} \bigg) \bigg( \int_0^{t}  w(x) \, dx \bigg) ^{\frac{1}{q}}
		\\
		& + \sup_{t \in (0,\infty)} \frac{B(t)}{v_1(t)^{\frac{1}{m}}}\bigg( \int_t^{\infty} \bigg( \sup_{\tau \in [x,\infty)} \bigg( \frac{u(\tau)}{B(\tau)}\bigg)^q \bigg) \, w(x) \, dx \bigg) ^{\frac{1}{q}}.
	\end{align*}
\end{theorem}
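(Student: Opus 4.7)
The plan is to peel the three suprema in Lemma~\ref{redlemma} off from the inside out. Fix $h$ and $\vp$, and set
$$
g(y) := b(y) \int_y^{\infty} \vp(x)\,\frac{u(x)}{B(x)}\,dx, \qquad y \in (0,\infty).
$$
Since $b$ is non-increasing by hypothesis and $y \mapsto \int_y^{\infty} \vp(x) u(x)/B(x)\,dx$ is obviously non-increasing, the product $g$ is non-increasing; hence $g^* = g$ and $g^{**}(t) = t^{-1}\int_0^t g(s)\,ds$. The innermost supremum in Lemma~\ref{redlemma} is then exactly the associate norm $\|g\|_{\GG(p,m,v)^{\prime}}$, so the assumption $0 < m \le 1$, $0 < p \le 1$ lets me invoke Theorem~\ref{assosGG}(i) to get
$$
\sup_{f \in \mp (\rn)} \frac{\int_0^{\infty} f^*(y) g(y)\,dy}{\|f\|_{\GG(p,m,v)}} \ap \sup_{t \in (0,\infty)} \frac{1}{v_1(t)^{\frac{1}{m}}} \int_0^t g(s)\,ds.
$$

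Next I use Fubini's theorem to rewrite
$$
\int_0^t g(s)\,ds = \int_0^t \vp(x) u(x)\,dx + B(t) \int_t^{\infty} \vp(x)\,\frac{u(x)}{B(x)}\,dx = \int_0^{\infty} F_t(x)\, \vp(x)\,dx,
$$
where $F_t(x) := u(x)\,\chi_{(0,t]}(x) + B(t)\, u(x)\, B(x)^{-1}\, \chi_{(t,\infty)}(x)$. After interchanging the three suprema so that $\sup_t v_1(t)^{-1/m}$ sits outside, the standard weighted $L^q/L^{q'}$ duality dispatches the $\vp$-supremum:
$$
\sup_{\vp \in \mp^+} \frac{\int_0^{\infty} F_t(x) \vp(x)\,dx}{\|\vp\|_{q',h^{1-q'},(0,\infty)}} = \bigg( \int_0^{\infty} F_t(x)^q h(x)\,dx\bigg)^{\frac{1}{q}}.
$$
Then Theorem~\ref{transfermon}, applied with $F_t^q$ in the role of the ``weight'' there, turns the constraint $\int_0^x h \le \int_0^x w$ into a pointwise supremum:
$$
\sup_{h} \int_0^{\infty} F_t(x)^q h(x)\,dx = \int_0^{\infty} w(x) \bigg(\sup_{\tau \in [x,\infty)} F_t(\tau)\bigg)^q dx.
$$

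Finally I evaluate $\sup_{\tau \ge x} F_t(\tau)$ by the case split $x \le t$ versus $x > t$ (this is the content of Remark~\ref{rem}): for $x \le t$ the supremum equals $\max\big(\sup_{x \le \tau \le t} u(\tau),\, B(t) \sup_{\tau \ge t} u(\tau)/B(\tau)\big)$, while for $x > t$ it equals $B(t) \sup_{\tau \ge x} u(\tau)/B(\tau)$. Replacing the inner $\max$ by a sum (equivalent up to a factor of $2$) and using $(a+b+c)^{1/q} \ap a^{1/q}+b^{1/q}+c^{1/q}$ breaks the estimate into three additive pieces; distributing $\sup_t v_1(t)^{-1/m}$ across these pieces produces exactly the three summands in the statement. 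The principal technical obstacle is precisely this last case analysis, where one has to balance the Hardy-type contribution $u(\tau)$ on $[x,t]$ against the Copson-type contribution $B(t) u(\tau)/B(\tau)$ on $[t,\infty)$ while preserving equivalence constants; everything before it is structural and follows directly from the cited results, the crucial structural input being the monotonicity of $g$ that makes Theorem~\ref{assosGG}(i) immediately applicable.
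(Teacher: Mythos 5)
Your argument is correct and follows the paper's proof essentially verbatim: both pass through Lemma~\ref{redlemma}, Theorem~\ref{assosGG}(i) (using, as you explicitly note, that $g(y)=b(y)\int_y^\infty \vp\, u/B$ is non-increasing because $b$ is), Fubini, weighted $L^q$/$L^{q'}$ duality, Theorem~\ref{transfermon}, and Remark~\ref{rem}. The only cosmetic difference is that the paper splits $\int_0^t g$ into its Hardy piece $\int_0^t \vp u$ and Copson piece $B(t)\int_t^\infty \vp u/B$ immediately after Fubini and carries two terms through the remaining steps, whereas you keep the single kernel $F_t$ bundled through duality and Sinnamon's theorem and only split into the three summands at the final case analysis.
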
	

\begin{proof}
	By Lemma \ref{redlemma}, Theorem \ref{assosGG}, (i), and Fubini theorem, we have that
	\begin{align*}
		K \approx & \sup_{h:\, \int_0^x h \le \int_0^x w}  \sup_{\vp \in \mathfrak{M}^{+}} \frac{1}{\|\vp\|_{q',h^{1-q'},(0,\infty)}}
		\sup_{t \in (0,\infty)} \frac{\int_0^{t} b(y) \int_y^{\infty} \vp(x) \frac{u(x)}{B(x)} \,dx \,dy}{v_1(t)^{\frac{1}{m}}} \\
		\approx & \sup_{h:\, \int_0^x h \le \int_0^x w} \sup_{\vp \in \mathfrak{M}^{+}} \frac{1}{\|\vp\|_{q',h^{1-q'},(0,\infty)}}
		\sup_{t \in (0,\infty)} \frac{\int_0^{t} \vp(x) u(x) \, dx}{v_1(t)^{\frac{1}{m}}} 
		\\
		& + \sup_{h:\, \int_0^x h \le \int_0^x w} \sup_{\vp \in \mathfrak{M}^{+}} \frac{1}{\|\vp\|_{q',h^{1-q'},(0,\infty)}}
		\sup_{t \in (0,\infty)} \frac{B(t)\int_t^{\infty} \vp(x) \frac{u(x)}{B(x)} \,dx}{v_1(t)^{\frac{1}{m}}}.
	\end{align*} 

	Interchanging suprema, by duality, we get that
	\begin{align*}
		K \approx & \, \sup_{h:\, \int_0^x h \le \int_0^x w} \sup_{t \in (0,\infty)} \frac{1}{v_1(t)^{\frac{1}{m}}}
		\sup_{\vp \in \mathfrak{M}^{+}} \frac{\int_0^{\infty} \vp(x) u(x) \chi_{(0,t]}(x)\,dx}{\|\vp\|_{q',h^{1-q'},(0,\infty)}} 
		\\
		& + \sup_{h:\, \int_0^x h \le \int_0^x w} \sup_{t \in (0,\infty)} \frac{B(t)}{v_1(t)^{\frac{1}{m}}}
		\sup_{\vp \in \mathfrak{M}^{+}} \frac{\int_0^{\infty} \vp(x) \frac{u(x)}{B(x)} \chi_{[t,\infty)}(x)\,dx}{\|\vp\|_{q',h^{1-q'},(0,\infty)}}
		\\
		\approx & \,\sup_{t \in (0,\infty)} \frac{1}{v_1(t)^{\frac{1}{m}}} \sup_{h:\, \int_0^x h \le \int_0^x w}  \bigg(\int_0^{\infty} h(x) u(x)^q \chi_{(0,t]}(x)\,dx\bigg)^{\frac{1}{q}}
		\\
		& + \sup_{t \in (0,\infty)} \frac{B(t)}{v_1(t)^{\frac{1}{m}}} \sup_{h:\, \int_0^x h \le \int_0^x w} \bigg( \int_0^{\infty} h(x) \bigg( \frac{u(x)}{B(x)}\bigg)^q \chi_{[t,\infty)}(x)\,dx\bigg)^{\frac{1}{q}}.
	\end{align*} 

	Applying Theorem \ref{transfermon}, by Remark \ref{rem}, we arrive at
	\begin{align*}
		K \approx  & \,  \sup_{t \in (0,\infty)} \frac{1}{v_1(t)^{\frac{1}{m}}} \bigg(\int_0^{\infty} \bigg( \sup_{\tau \in [x,\infty)} u(\tau)^q \chi_{(0,t]}(\tau)\bigg) \, w(x)\,dx \bigg)^{\frac{1}{q}}
		\\
		& + \sup_{t \in (0,\infty)} \frac{B(t)}{v_1(t)^{\frac{1}{m}}} \bigg(\int_0^{\infty} \bigg( \sup_{\tau \in [x,\infty)} \bigg( \frac{u(\tau)}{B(\tau)}\bigg)^q \chi_{[t,\infty)}(\tau) \bigg) \, w(x) \, dx \bigg)^{\frac{1}{q}} \\
		\approx & \, \sup_{t \in (0,\infty)} \frac{1}{v_1(t)^{\frac{1}{m}}} \bigg(\int_0^{t}\bigg( \sup_{\tau \in [x,t]} u(\tau)^q \bigg) \, w(x)\,dx\bigg) ^{\frac{1}{q}}
		\\
		& + \, \sup_{t \in (0,\infty)} \frac{B(t)}{v_1(t)^{\frac{1}{m}}} \bigg( \sup_{\tau \in [t,\infty)} \frac{u(\tau)}{B(\tau)} \bigg) \bigg( \int_0^{t}  w(x) \, dx \bigg) ^{\frac{1}{q}}
		\\
		& + \sup_{t \in (0,\infty)} \frac{B(t)}{v_1(t)^{\frac{1}{m}}}\bigg( \int_t^{\infty} \bigg( \sup_{\tau \in [x,\infty)}  \bigg( \frac{u(\tau)}{B(\tau)}\bigg)^q \bigg)  \, w(x) \, dx \bigg) ^{\frac{1}{q}}.
	\end{align*}

	The proof is completed.
\end{proof}	

\begin{theorem}\label{2stresult}
	Let $0 < m \le 1 , 1 < p < \infty, 1 < q < \infty$ and $b \in \W (0,\infty) \cap \mp^+ ((0,\infty);\dn)$ be such that the function $B(t)$ satisfies  $0 < B(t) < \infty$ for every $t \in (0,\infty)$. Suppose that $u \in \W(0,\infty) \cap C(0,\infty)$, $v \in \W_{m,p}(0,\infty)$ and $w \in \W(0,\infty)$.
	\begin{itemize}
		\item[i)] If $p \le q$, then
		\begin{align*}
			K \approx & \sup_{t \in (0,\infty)} \frac{t^{\frac{1}{p}}}{v_1(t)^{\frac{1}{m}}}  \sup_{s \in [t,\infty)} s^{-\frac{1}{p}} \bigg(\int_0^{s}\bigg( \sup_{\tau \in [x,s]} u(\tau)^q \bigg) \, w(x)\,dx\bigg) ^{\frac{1}{q}}
			\\
			& + \sup_{t \in (0,\infty)} \frac{t^{\frac{1}{p}}}{v_1(t)^{\frac{1}{m}}} \sup_{s \in [t,\infty)} \bigg( \int_t^s \bigg(\frac{B(\tau)}{\tau}\bigg)^{p'}\,d\tau\bigg)^{\frac{1}{p'}} \bigg( \sup_{\tau \in [s,\infty)} \frac{u(\tau)}{B(\tau)} \bigg) \bigg( \int_0^{s}  w(x) \, dx \bigg) ^{\frac{1}{q}}
			\\
			& + \sup_{t \in (0,\infty)} \frac{t^{\frac{1}{p}}}{v_1(t)^{\frac{1}{m}}} \sup_{s \in [t,\infty)} \bigg( \int_t^s \bigg(\frac{B(\tau)}{\tau}\bigg)^{p'}\,d\tau\bigg)^{\frac{1}{p'}} \bigg( \int_s^{\infty} \bigg( \sup_{\tau \in [x,\infty)} \bigg( \frac{u(\tau)}{B(\tau)}\bigg)^q \bigg) \, w(x) \, dx \bigg) ^{\frac{1}{q}};
		\end{align*}
				
		\item[ii)] If $q <  p$, then
		\begin{align*}
			K \approx & \sup_{t \in (0,\infty)} \frac{1}{v_1(t)^{\frac{1}{m}}} \bigg( \int_0^{t} \bigg( \sup_{\tau \in [x,t]} u(\tau)^q \bigg) \, w(x) \,dx\bigg)^{\frac{1}{q}} 
			\\
			& + \sup_{t \in (0,\infty)} \frac{t^{\frac{1}{p}}}{v_1(t)^{\frac{1}{m}}} \bigg(\int_0^{t} \, w(x) \,dx\bigg)^{\frac{1}{q}} \bigg( \sup_{\tau \in [t,\infty)} u(\tau) \tau^{-\frac{1}{p}} \bigg)
			\\
			& + \sup_{t \in (0,\infty)} \frac{t^{\frac{1}{p}}}{v_1(t)^{\frac{1}{m}}}  \bigg( \int_t^{\infty} \bigg(\sup_{\tau \in [s,\infty)} u(\tau)^{\frac{pq}{p-q}} \tau^{\frac{q}{q-p}}\bigg) \bigg(\int_{t}^{s} w(x) \, dx \bigg)^{\frac{q}{p-q}} w(s) \,ds\bigg)^{\frac{p-q}{pq}}
			\\
			& + \sup_{t \in (0,\infty)} \frac{t^{\frac{1}{p}}}{v_1(t)^{\frac{1}{m}}}  \bigg(\int_t^{\infty} \bigg(\int_{t}^{s} \bigg( \sup_{y\in [x,s]} u(y)^q \bigg) w(x) \,dx\bigg)^{\frac{q}{p-q}} \bigg(\sup_{\tau \in [s,\infty)} u(\tau)^q \tau^{\frac{q}{q-p}}\bigg)w(s)\, ds\bigg)^{\frac{p-q}{pq}}
			\\
			& + \sup_{t \in (0,\infty)} \frac{t^{\frac{1}{p}}}{v_1(t)^{\frac{1}{m}}} \bigg(\int_0^{t} w(x)\,dx\bigg)^{\frac{1}{q}} \bigg( \sup_{\tau \in [t,\infty)} \frac{u(\tau)}{B(\tau)} \bigg( \int_{t}^{\tau} {\mathcal B}(t,s) \,ds \bigg)^{\frac{p-q}{pq}} \bigg)
			\\
			& + \sup_{t \in (0,\infty)} \frac{t^{\frac{1}{p}}}{v_1(t)^{\frac{1}{m}}}  \bigg( \int_t^{\infty} \bigg[\sup_{\tau \in [x,\infty)}\bigg[\sup_{y \in [\tau,\infty)} \bigg(\frac{u(y)}{B(y)}\bigg)^\frac{pq}{p-q}\bigg]\bigg(\int_{t}^{\tau} \mathcal{B}(t,s) \,ds \bigg) \bigg]\bigg(\int_{t}^{x} w(y)\,dy \bigg)^{\frac{q}{p-q}} w(x) \,dx \bigg)^{\frac{p-q}{pq}}
			\\
			& + \sup_{t \in (0,\infty)} \frac{t^{\frac{1}{p}}}{v_1(t)^{\frac{1}{m}}}  \bigg( \int_t^{\infty}\bigg(\int_{x}^{\infty} \bigg[\sup_{\tau \in [y,\infty)}\bigg(\frac{u(\tau)}{B(\tau)}\bigg)^q\bigg]w(y)\, dy\bigg)^{\frac{q}{p-q}} \bigg[\sup_{\tau \in [x,\infty)}\bigg(\frac{u(\tau)}{B(\tau)}\bigg)^q\bigg]\bigg(\int_{t}^{x}\mathcal{B}(t,s) \,ds \bigg) w(x) \,dx\bigg)^{\frac{p-q}{pq}},
		\end{align*} 
		where 
		$$
		{\mathcal B}(t,s) := \bigg( \int_t^s \bigg( \frac{B(y)}{y} \bigg)^{p'}\,dy \bigg)^{\frac{p(q-1)}{p-q}}  \,  \bigg( \frac{B(s)}{s} \bigg)^{p'}, \qquad 0 < t < s < \infty.
		$$
	\end{itemize}
\end{theorem}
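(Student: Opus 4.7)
My plan is to adapt the scheme of the proof of Theorem \ref{1stresult}, using Theorem \ref{assosGG}(ii) (appropriate in the range $0 < m \le 1$, $1 < p < \infty$) in place of part (i). Starting from Lemma \ref{redlemma}, the innermost supremum over $f$ equals up to equivalence the norm $\|g\|_{\GG(p,m,v)^{\prime}}$ of $g(y) := b(y)\int_y^{\infty} \vp(x)\frac{u(x)}{B(x)}\,dx$; since $b$ is non-increasing, so is $g$, hence $g^* = g$, and Theorem \ref{assosGG}(ii) gives
$$
\|g\|_{\GG(p,m,v)^{\prime}} \ap \sup_{t \in (0,\infty)} \frac{t^{1/p}}{v_1(t)^{1/m}} \bigg( \int_t^{\infty} g^{**}(s)^{p'}\,ds \bigg)^{1/p'}.
$$
A Fubini computation yields $s\,g^{**}(s) = \int_0^s \vp u + B(s)\int_s^{\infty}\vp u/B$. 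For $s \ge t$ I would split $\int_0^s \vp u = \int_0^t \vp u + \int_t^s \vp u$ and use $(a+b+c)^{p'} \ap a^{p'} + b^{p'} + c^{p'}$ together with $\int_t^{\infty} s^{-p'}\,ds \ap t^{1-p'}$ to decompose $\bigl(\int_t^{\infty} g^{**}(s)^{p'}\,ds\bigr)^{1/p'}$ into three pieces, leading to $K \ap K_A + K_B + K_C$: $K_A$ contains $t^{-1/p}\int_0^t \vp u$, $K_B$ the Hardy piece $\bigl(\int_t^{\infty} s^{-p'}\bigl(\int_t^s \vp u\bigr)^{p'} ds\bigr)^{1/p'}$, and $K_C$ the Copson piece $\bigl(\int_t^{\infty} (B(s)/s)^{p'}\bigl(\int_s^{\infty} \vp u/B\bigr)^{p'} ds\bigr)^{1/p'}$.

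For $K_A$ the $\vp$-supremum collapses by H\"older duality to $t^{-1/p}(\int_0^t u^q h)^{1/q}$, and Theorem \ref{transfermon} with Remark \ref{rem} then produces the first displayed term of the statement in both cases. For $K_B$, the substitution $\psi := \vp u$ (so that $\psi$ carries the weight $u^{-q'}h^{1-q'}$) converts the $\vp$-supremum into a weighted Hardy inequality on $(t,\infty)$ to which Theorem \ref{thm.Hardy} applies; for $K_C$, the substitution $\psi := \vp u/B$ yields a weighted Copson inequality, and the kernel $\mathcal{B}(t,s)$ in the statement arises naturally from the exponents produced by Theorem \ref{thm.Copson}.

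In case (i) $p \le q$ (so $p' \ge q'$), parts (a) of Theorems \ref{thm.Hardy} and \ref{thm.Copson} yield supremum-type characterizations; interchanging $\sup_s$ with $\sup_h$ and applying Theorem \ref{transfermon} together with Remark \ref{rem} fuses $K_A$ and $K_B$ into the first displayed term of case (i), while $K_C$ splits into the remaining two. In case (ii) $q < p$ (so $p' < q'$), parts (b) of Theorems \ref{thm.Hardy} and \ref{thm.Copson} give integral-type characterizations in which $h$ enters nonlinearly through $\bigl(\int_t^s u^q h\bigr)^{p/(p-q)}$ and $\bigl(\int_s^{\infty}(u/B)^q h\bigr)^{p/(p-q)}$. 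To handle $\sup_h$ I would dualize the outer $L^{p/(p-q)}$-integral against an auxiliary function $g$, use Fubini to bring $h$ back linearly into the integrand, and then apply Theorem \ref{transfermon} with Remark \ref{rem}. The residual $\sup_g$ problems are then exactly the weighted supremum--Hardy inequalities characterized by Theorem \ref{krepelathm6b} (for the $K_B$ piece, where $g$ enters via $\int_{\tau}^{\infty} s^{-1}g$) and Theorem \ref{thm44b} (for the $K_C$ piece, where $g$ enters via $\int_t^y g\,\mathcal{B}(t,\cdot)\,ds$); their best constants $E_1+E_2$ and $D_1+D_2$ produce Terms 3, 4, 6, 7, while the boundary contributions from Remark \ref{rem} yield Terms 2 and 5, and $K_A$ alone contributes Term 1.

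The principal obstacle is the $\sup_h$ in case (ii): once Hardy and Copson are applied in integral form, $h$ appears nonlinearly inside the outer integral, and Theorem \ref{transfermon} cannot be applied directly. The dualization trick that linearizes the dependence on $h$, followed by the recognition of the resulting inequality as one of the weighted supremum--Hardy inequalities of Theorems \ref{thm44b} and \ref{krepelathm6b}, is the technical heart of the argument; it is precisely this step that forces the nested suprema $\sup_{\tau \ge x}\bigl[\sup_{y \ge \tau}(\cdot)\bigr]$ in Terms 6 and 7.
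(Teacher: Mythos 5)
Your plan follows the same route as the paper: reduce via Lemma \ref{redlemma} and Theorem \ref{assosGG}(ii), compute $g^{**}$ by Fubini, separate the Hardy and Copson contributions, apply Theorems \ref{thm.Hardy} and \ref{thm.Copson} (parts (a) or (b) according as $p\le q$ or $q<p$), and in case (ii) re-dualize the resulting nonlinear $h$-functional against an auxiliary function so that $h$ reappears linearly, then apply Theorem \ref{transfermon}/Remark \ref{rem} and finally Theorems \ref{krepelathm6b} (Hardy piece, since the auxiliary function enters through $\int_\tau^\infty$) and \ref{thm44b} (Copson piece, through $\int_t^\tau$). Your identification of the seven terms of case (ii) with the pieces $K_A$ (Term 1), the boundary/integral pieces of $K_B$ (Terms 2--4) and of $K_C$ (Terms 5--7), and of the three terms of case (i) with a fused $K_A+K_B$ and a split $K_C$, all match the paper exactly.

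The one structural difference is cosmetic: you split $\int_0^s \vp u$ into $\int_0^t \vp u + \int_t^s \vp u$ at the level of $g^{**}$ (producing $K_A$, $K_B$, $K_C$ from the outset), whereas the paper keeps the Hardy piece whole, applies Theorem \ref{thm.Hardy} directly, and the same three-way separation emerges automatically after dualizing and applying Fubini (the factor $\int_{\max(\tau,t)}^\infty \vp$ is piecewise different for $\tau<t$ and $\tau\ge t$). Either ordering of the split leads to the same terms, as you note when observing that $K_A+K_B$ fuses into the first term of case (i) via $\int_0^s = \int_0^t + \int_t^s$ and $s\ge t$. The small imprecision in your description of the auxiliary-function inequality for the Hardy piece (the extraneous $s^{-1}$ in ``$\int_\tau^\infty s^{-1} g$'') does not affect the argument: what matters is that the inner integral $\int_\tau^\infty$ is of Copson type, which correctly directs you to Theorem \ref{krepelathm6b} rather than \ref{thm44b}.
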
	
\begin{proof}
	By  Lemma \ref{redlemma} and Theorem \ref{assosGG}, (ii), we have
	\begin{align*}
		K \approx & \sup_{h:\, \int_0^x h \le \int_0^x w} \sup_{\vp \in \mathfrak{M}^{+}} \frac{1}{\|\vp\|_{q',h^{1-q'},(0,\infty)}}
		\sup_{t \in (0,\infty)} \bigg( \int_t^{\infty} \bigg( \frac{1}{s} \int_0^s b(y) \int_y^{\infty} \vp(x) \frac{u(x)}{B(x)} \,dx \bigg)^{p'} \,ds \bigg)^{\frac{1}{p'}} \frac{t^{\frac{1}{p}}}{v_1(t)^{\frac{1}{m}}} \\
		 \approx & \sup_{h:\, \int_0^x h \le \int_0^x w} \sup_{\vp \in \mathfrak{M}^{+}} \frac{1}{\|\vp\|_{q',h^{1-q'},(0,\infty)}}
		\sup_{t \in (0,\infty)} \frac{t^{\frac{1}{p}}}{v_1(t)^{\frac{1}{m}}} \bigg( \int_t^{\infty} \bigg( \frac{1}{s} \int_0^s \vp(x) u(x) \,dx \bigg)^{p'} \,ds\bigg)^{\frac{1}{p'}} \\
		& + \sup_{h:\, \int_0^x h \le \int_0^x w} \sup_{\vp \in \mathfrak{M}^{+}} \frac{1}{\|\vp\|_{q',h^{1-q'},(0,\infty)}}
		\sup_{t \in (0,\infty)} \frac{t^{\frac{1}{p}}}{v_1(t)^{\frac{1}{m}}} \bigg( \int_t^{\infty} \bigg( \frac{B(s)}{s}  \int_s^{\infty} \vp(x) \frac{u(x)}{B(x)} \,dx \bigg)^{p'} \,ds\bigg)^{\frac{1}{p'}}.
	\end{align*} 
	
	Interchanging the suprema gives
	\begin{align*}
		K \approx & \sup_{h:\, \int_0^x h \le \int_0^x w} \sup_{t \in (0,\infty)} \frac{t^{\frac{1}{p}}}{v_1(t)^{\frac{1}{m}}} \sup_{\vp \in \mathfrak{M}^{+}} \frac{ \bigg( \int_0^{\infty} \bigg( \frac{1}{s} \int_0^s \vp(x) u(x) \,dx \bigg)^{p'} \chi_{[t,\infty)}(s)\,ds\bigg)^{\frac{1}{p'}}}{\|\vp\|_{q',h^{1-q'},(0,\infty)}} 
		\\
		& + \sup_{h:\, \int_0^x h \le \int_0^x w} \sup_{t \in (0,\infty)} \frac{t^{\frac{1}{p}}}{v_1(t)^{\frac{1}{m}}} \sup_{\vp \in \mathfrak{M}^{+}} \frac{\bigg( \int_0^{\infty} \bigg( \frac{B(s)}{s}  \int_s^{\infty} \vp(x) \frac{u(x)}{B(x)} \,dx \bigg)^{p'} \chi_{[t,\infty)}(s)\,ds\bigg)^{\frac{1}{p'}}}{\|\vp\|_{q',h^{1-q'},(0,\infty)}}.
	\end{align*} 
	
	First consider the case when $p \le q$. By Theorems \ref{thm.Hardy} and \ref{thm.Copson}, we have that
	\begin{align*}
		K \approx & \sup_{h:\, \int_0^x h \le \int_0^x w} \sup_{t \in (0,\infty)} \frac{t^{\frac{1}{p}}}{v_1(t)^{\frac{1}{m}}} \sup_{s \in (0,\infty)} \bigg( \int_s^{\infty} \tau^{-p'} \chi_{[t,\infty)}(\tau)\,d\tau\bigg)^{\frac{1}{p'}} \bigg( \int_0^s h(\tau) u(\tau)^q \,d\tau \bigg)^{\frac{1}{q}}
		\\
		& + \sup_{h:\, \int_0^x h \le \int_0^x \big( \int_{\tau}^{\infty} w
		\big)\,d\tau} \sup_{t \in (0,\infty)} \frac{t^{\frac{1}{p}}}{v_1(t)^{\frac{1}{m}}} \sup_{s \in (0,\infty)} \bigg( \int_0^s \bigg(\frac{B(\tau)}{\tau}\bigg)^{p'} \chi_{[t,\infty)}(\tau)\,d\tau\bigg)^{\frac{1}{p'}} \bigg( \int_s^{\infty} h(\tau) \bigg(\frac{u(\tau)}{B(\tau)}\bigg)^q \,d\tau \bigg)^{\frac{1}{q}}.
	\end{align*} 
	
	Since
		\begin{align*}
		\sup_{s \in (0,\infty)} & \bigg( \int_s^{\infty} \tau^{-p'} \chi_{[t,\infty)}(\tau)\,d\tau\bigg)^{\frac{1}{p'}} \bigg( \int_0^s h(\tau) u(\tau)^q \,d\tau \bigg)^{\frac{1}{q}}
		\\
		 &  = \max \bigg\{\sup_{s \in (0,t)} \bigg( \int_s^{\infty} \tau^{-p'} \chi_{[t,\infty)}(\tau)\,d\tau\bigg)^{\frac{1}{p'}}\bigg( \int_0^s h(\tau) u(\tau)^q \,d\tau \bigg)^{\frac{1}{q}}, 
		 \\
		 & \qquad \qquad \sup_{s \in [t,\infty)} \bigg( \int_s^{\infty} \tau^{-p'} \chi_{[t,\infty)}(\tau)\,d\tau\bigg)^{\frac{1}{p'}}\bigg( \int_0^s h(\tau) u(\tau)^q \,d\tau \bigg)^{\frac{1}{q}}\bigg\}	\\ 
		 & = \max \bigg\{\bigg( \int_t^{\infty} \tau^{-p'} \,d\tau\bigg)^{\frac{1}{p'}}\bigg( \int_0^t h(\tau) u(\tau)^q \,d\tau \bigg)^{\frac{1}{q}}, \sup_{s \in [t,\infty)} \bigg( \int_s^{\infty} \tau^{-p'} \,d\tau\bigg)^{\frac{1}{p'}} \bigg( \int_0^s h(\tau) u(\tau)^q \,d\tau \bigg)^{\frac{1}{q}}\bigg\} \\ 
		 & = \sup_{s \in [t,\infty)} \bigg( \int_s^{\infty} \tau^{-p'} \,d\tau\bigg)^{\frac{1}{p'}} \bigg( \int_0^s h(\tau) u(\tau)^q \,d\tau \bigg)^{\frac{1}{q}} \\ 
		 & \approx \sup_{s \in [t,\infty)} s^{-\frac{1}{p}} \bigg( \int_0^s h(\tau) u(\tau)^q \,d\tau \bigg)^{\frac{1}{q}}
	\end{align*}
	and
	\begin{align*}
		\sup_{s \in (0,\infty)} & \bigg( \int_0^s \bigg(\frac{B(\tau)}{\tau}\bigg)^{p'} \chi_{[t,\infty)}(\tau)\,d\tau\bigg)^{\frac{1}{p'}}\bigg( \int_s^{\infty} h(\tau) \bigg(\frac{u(\tau)}{B(\tau)}\bigg)^q \,d\tau \bigg)^{\frac{1}{q}} \\
		&  = \max \bigg\{\sup_{s \in (0,t)} \bigg( \int_0^s \bigg(\frac{B(\tau)}{\tau}\bigg)^{p'} \chi_{[t,\infty)}(\tau)\,d\tau\bigg)^{\frac{1}{p'}}\bigg( \int_s^{\infty} h(\tau) \bigg(\frac{u(\tau)}{B(\tau)}\bigg)^q \,d\tau \bigg)^{\frac{1}{q}}, \\ 
		& \qquad \qquad \sup_{s \in [t,\infty)} \bigg( \int_0^s \bigg(\frac{B(\tau)}{\tau}\bigg)^{p'} \chi_{[t,\infty)}(\tau)\,d\tau\bigg)^{\frac{1}{p'}} \bigg( \int_s^{\infty} h(\tau) \bigg(\frac{u(\tau)}{B(\tau)}\bigg)^q \,d\tau \bigg)^{\frac{1}{q}}\bigg\}	\\ 
		&  = \max \bigg\{0 , \sup_{s \in [t,\infty)} \bigg( \int_t^s \bigg(\frac{B(\tau)}{\tau}\bigg)^{p'}\,d\tau\bigg)^{\frac{1}{p'}} \bigg( \int_s^{\infty} h(\tau) \bigg(\frac{u(\tau)}{B(\tau)}\bigg)^q \,d\tau \bigg)^{\frac{1}{q}}\bigg\}
		\\ 
		& = \sup_{s \in [t,\infty)} \bigg( \int_t^s \bigg(\frac{B(\tau)}{\tau}\bigg)^{p'}\,d\tau\bigg)^{\frac{1}{p'}} \bigg( \int_s^{\infty} h(\tau) \bigg(\frac{u(\tau)}{B(\tau)}\bigg)^q \,d\tau \bigg)^{\frac{1}{q}}, 
	\end{align*}
	then interchanging the suprema we get that
	\begin{align*}
		K \approx & \sup_{t \in (0,\infty)} \frac{t^{\frac{1}{p}}}{v_1(t)^{\frac{1}{m}}} \sup_{s \in [t,\infty)} s^{-\frac{1}{p}} \sup_{h:\, \int_0^x h \le \int_0^x w} \bigg( \int_0^s h(\tau) u(\tau)^q \,d\tau \bigg)^{\frac{1}{q}}
		\\
		& + \sup_{t \in (0,\infty)} \frac{t^{\frac{1}{p}}}{v_1(t)^{\frac{1}{m}}}\sup_{s \in [t,\infty)} \bigg( \int_t^s \bigg(\frac{B(\tau)}{\tau}\bigg)^{p'}\,d\tau\bigg)^{\frac{1}{p'}} \sup_{h:\, \int_0^x h \le \int_0^x w} \bigg( \int_s^{\infty} h(\tau) \bigg(\frac{u(\tau)}{B(\tau)}\bigg)^q \,d\tau \bigg)^{\frac{1}{q}}.
	\end{align*}

	Finally, by Theorem \ref{transfermon} and Remark \ref{rem}, we arrive at
	\begin{align*}
		K \approx & \sup_{t \in (0,\infty)} \frac{t^{\frac{1}{p}}}{v_1(t)^{\frac{1}{m}}} \sup_{s \in [t,\infty)} s^{-\frac{1}{p}}  \bigg( \int_0^\infty \bigg( \sup_{\tau \in [x,\infty)} u(\tau)^q \chi_{(0,s]}(\tau) \bigg) \,dx \bigg)^{\frac{1}{q}}
		\\
		& + \sup_{t \in (0,\infty)} \frac{t^{\frac{1}{p}}}{v_1(t)^{\frac{1}{m}}}\sup_{s \in [t,\infty)} \bigg( \int_t^s \bigg(\frac{B(\tau)}{\tau}\bigg)^{p'}\,d\tau\bigg)^{\frac{1}{p'}} \bigg( \int_0^{\infty} \bigg( \sup_{\tau \in [x,\infty)} \bigg(\frac{u(\tau)}{B(\tau)}\bigg)^q \chi_{[s, \infty)} (\tau) \bigg) \,dx \bigg)^{\frac{1}{q}} \\
	    \approx & \sup_{t \in (0,\infty)} \frac{t^{\frac{1}{p}}}{v_1(t)^{\frac{1}{m}}}  \sup_{s \in [t,\infty)} s^{-\frac{1}{p}} \bigg(\int_0^{s}\bigg( \sup_{\tau \in [x,s]} u(\tau)^q \bigg) \, w(x)\,dx\bigg) ^{\frac{1}{q}} 
		\\
		& + \sup_{t \in (0,\infty)} \frac{t^{\frac{1}{p}}}{v_1(t)^{\frac{1}{m}}} \sup_{s \in [t,\infty)} \bigg( \int_t^s \bigg(\frac{B(\tau)}{\tau}\bigg)^{p'}\,d\tau\bigg)^{\frac{1}{p'}} \bigg( \sup_{\tau \in [s,\infty)} \frac{u(\tau)}{B(\tau)} \bigg) \bigg( \int_0^{s}  w(x) \, dx \bigg) ^{\frac{1}{q}}
		\\
		& + \sup_{t \in (0,\infty)} \frac{t^{\frac{1}{p}}}{v_1(t)^{\frac{1}{m}}} \sup_{s \in [t,\infty)} \bigg( \int_t^s \bigg(\frac{B(\tau)}{\tau}\bigg)^{p'}\,d\tau\bigg)^{\frac{1}{p'}} \bigg( \int_s^{\infty} \bigg( \sup_{\tau \in [x,\infty)} \bigg( \frac{u(\tau)}{B(\tau)}\bigg)^q \bigg) \, w(x) \, dx \bigg) ^{\frac{1}{q}}.
	\end{align*}

	Next consider the case when $q <  p$. By Theorems \ref{thm.Hardy} and \ref{thm.Copson}, we have that
	\begin{align*}
		K \approx & \sup_{h:\, \int_0^x h \le \int_0^x w} \sup_{t \in (0,\infty)} \frac{t^{\frac{1}{p}}}{v_1(t)^{\frac{1}{m}}} \times \\
		& \qquad \times \bigg(\int_0^{\infty}\bigg( \int_s^{\infty} \tau^{-p'} \chi_{[t,\infty)}(\tau)\,d\tau\bigg)^{\frac{(q-1)p}{p - q}} s^{-p'} \chi_{[t,\infty)}(s) \bigg( \int_0^s h(\tau) u(\tau)^q \,d\tau \bigg)^{\frac{p}{p - q}} \,ds \bigg)^{\frac{p-q}{pq}}
		\\
		& + \sup_{h:\, \int_0^x h \le \int_0^x w} \sup_{t \in (0,\infty)} \frac{t^{\frac{1}{p}}}{v_1(t)^{\frac{1}{m}}} \times \\
		& \qquad \times \bigg(\int_0^{\infty} \bigg( \int_0^s \bigg(\frac{B(\tau)}{\tau}\bigg)^{p'} \chi_{[t,\infty)}(\tau)\,d\tau\bigg)^{\frac{(q-1)p}{p - q}} \bigg(\frac{B(s)}{s}\bigg)^{p'} \chi_{[t,\infty)}(s) \bigg( \int_s^{\infty} h(\tau) \bigg(\frac{u(\tau)}{B(\tau)}\bigg)^q \,d\tau \bigg)^{\frac{p}{p - q}} \, ds\bigg)^{\frac{p-q}{pq}}.
	\end{align*} 

    Since
	\begin{align*}
		& \hspace{-2 cm}\int_0^{\infty}\bigg( \int_s^{\infty} \tau^{-p'} \chi_{[t,\infty)}(\tau)\,d\tau\bigg)^{\frac{(q-1)p}{p - q}} s^{-p'} \chi_{[t,\infty)}(s) \bigg( \int_0^s h(\tau) u(\tau)^q \,d\tau \bigg)^{\frac{p}{p - q}} \,ds
		\\
		= & \int_t^{\infty}\bigg( \int_s^{\infty} \tau^{-p'}\,d\tau\bigg)^{\frac{(q-1)p}{p - q}} s^{-p'} \bigg( \int_0^s h(\tau) u(\tau)^q \,d\tau \bigg)^{\frac{p}{p - q}} \,ds
		\\
		\approx & \int_t^{\infty}s^{\frac{p}{q - p}} \bigg( \int_0^s h(\tau) u(\tau)^q \,d\tau \bigg)^{\frac{p}{p - q}} \,ds
	\end{align*}
	and
	\begin{align*}
		& \hspace{-2 cm} \int_0^{\infty} \bigg( \int_0^s \bigg(\frac{B(\tau)}{\tau}\bigg)^{p'} \chi_{[t,\infty)}(\tau)\,d\tau\bigg)^{\frac{(q-1)p}{p - q}} \bigg(\frac{B(s)}{s}\bigg)^{p'} \chi_{[t,\infty)}(s) \bigg( \int_s^{\infty} h(\tau) \bigg(\frac{u(\tau)}{B(\tau)}\bigg)^q \,d\tau \bigg)^{\frac{p}{p - q}} \, ds
		\\
		= & \int_t^{\infty} \bigg( \int_t^s \bigg(\frac{B(\tau)}{\tau}\bigg)^{p'} \,d\tau\bigg)^{\frac{(q-1)p}{p - q}} \bigg(\frac{B(s)}{s}\bigg)^{p'} \bigg( \int_s^{\infty} h(\tau) \bigg(\frac{u(\tau)}{B(\tau)}\bigg)^q \,d\tau \bigg)^{\frac{p}{p - q}} \, ds
		\\ 
		= & \int_t^{\infty} {\mathcal B}(t,s) \bigg( \int_s^{\infty} h(\tau) \bigg(\frac{u(\tau)}{B(\tau)}\bigg)^q \,d\tau \bigg)^{\frac{p}{p - q}} \, ds,
	\end{align*}
	then
	\begin{align*}
		K \approx & \sup_{h:\, \int_0^x h \le \int_0^x w} \sup_{t \in (0,\infty)} \frac{t^{\frac{1}{p}}}{v_1(t)^{\frac{1}{m}}} \bigg( \int_t^{\infty}s^{\frac{p}{q - p}} \bigg( \int_0^s h(\tau) u(\tau)^q \,d\tau \bigg)^{\frac{p}{p - q}} \,ds\bigg)^{\frac{p-q}{pq}}
		\\
		& +\sup_{h:\, \int_0^x h \le \int_0^x w}
		\sup_{t \in (0,\infty)} \frac{t^{\frac{1}{p}}}{v_1(t)^{\frac{1}{m}}} \bigg(\int_t^{\infty} {\mathcal B}(t,s) \bigg( \int_s^{\infty} h(\tau) \bigg(\frac{u(\tau)}{B(\tau)}\bigg)^q \,d\tau \bigg)^{\frac{p}{p - q}} \, ds\bigg)^{\frac{p-q}{pq}}.
	\end{align*} 

	By duality, applying Fubini's Theorem and interchanging the suprema, we get that 
	\begin{align*}
		K \approx & \sup_{h:\, \int_0^x h \le \int_0^x w} \sup_{t \in (0,\infty)} \frac{t^{\frac{1}{p}}}{v_1(t)^{\frac{1}{m}}} \left( \sup_{\vp \in {\mathfrak M}^+ [t,\infty)} \frac{\int_t^{\infty} \vp(s)  \int_0^s h(\tau) u(\tau)^q \,d\tau \,ds}{\bigg( \int_t^{\infty} \vp(s)^{\frac{p}{q}} s\,ds \bigg)^{\frac{q}{p}}}
		\right)^{\frac{1}{q}}
		\\
		& + \sup_{h:\, \int_0^x h \le \int_0^x w}
		\sup_{t \in (0,\infty)} \frac{t^{\frac{1}{p}}}{v_1(t)^{\frac{1}{m}}}  \left( \sup_{\vp \in {\mathfrak M}^+ [t,\infty)} \frac{\int_t^{\infty} \vp(s) \int_s^{\infty} h(\tau) \bigg(\frac{u(\tau)}{B(\tau)}\bigg)^q \,d\tau \,ds}{\bigg( \int_t^{\infty} \vp(s)^{\frac{p}{q}} {\mathcal B}(t,s)^{\frac{q-p}{q}}\,ds \bigg)^{\frac{q}{p}}} \right)^{\frac{1}{q}} \\
		\approx & \sup_{t \in (0,\infty)} \frac{t^{\frac{1}{p}}}{v_1(t)^{\frac{1}{m}}} \left( \sup_{\vp \in {\mathfrak M}^+ [t,\infty)} \frac{\sup_{h:\, \int_0^x h \le \int_0^x w} \int_0^{t} h(\tau) u(\tau)^q \,d\tau \, \bigg( \int_{t}^{\infty} \vp(s) \,ds \bigg)  }{\bigg( \int_t^{\infty} \vp(s)^{\frac{p}{q}} s\,ds \bigg)^{\frac{q}{p}}} \right)^{\frac{1}{q}}
		\\
		& + \sup_{t \in (0,\infty)} \frac{t^{\frac{1}{p}}}{v_1(t)^{\frac{1}{m}}} \left( \sup_{\vp \in {\mathfrak M}^+ [t,\infty)} \frac{\sup_{h:\, \int_0^x h \le \int_0^x w} \int_t^{\infty} h(\tau) u(\tau)^q \, \bigg( \int_{\tau}^{\infty} \vp(s) \,ds \bigg) \,d\tau }{\bigg( \int_t^{\infty} \vp(s)^{\frac{p}{q}} s\,ds \bigg)^{\frac{q}{p}}} \right)^{\frac{1}{q}}
		\\
		& + \sup_{t \in (0,\infty)} \frac{t^{\frac{1}{p}}}{v_1(t)^{\frac{1}{m}}}  \left( \sup_{\vp \in {\mathfrak M}^+ [t,\infty)} \frac{ \sup_{h:\, \int_0^x h \le \int_0^x w} \int_t^{\infty} h(\tau) \bigg(\frac{u(\tau)}{B(\tau)}\bigg)^q \bigg(\int_t^{\tau} \vp(s)\,ds \bigg) \,d\tau }{\bigg( \int_t^{\infty} \vp(s)^{\frac{p}{q}} {\mathcal B}(t,s)^{\frac{q-p}{q}}\,ds \bigg)^{\frac{q}{p}}} \right)^{\frac{1}{q}}.
	\end{align*}

	On using Theorem \ref{transfermon}, we arrive at
	\begin{align*}
		K \approx & \sup_{t \in (0,\infty)} \frac{t^{\frac{1}{p}}}{v_1(t)^{\frac{1}{m}}} \left( \sup_{\vp \in {\mathfrak M}^+ [t,\infty)} \frac{\bigg( \int_0^{\infty} \bigg( \sup_{\tau \in [x,\infty)} u(\tau)^q \chi_{(0,t]}(\tau) \bigg) \, w(x) \,dx\bigg) \bigg( \int_{t}^{\infty} \vp(s) \,ds \bigg)}{\bigg( \int_t^{\infty} \vp(s)^{\frac{p}{q}} s \,ds \bigg)^{\frac{q}{p}}} \right)^{\frac{1}{q}}
		\\
		& + \sup_{t \in (0,\infty)} \frac{t^{\frac{1}{p}}}{v_1(t)^{\frac{1}{m}}} \left( \sup_{\vp \in {\mathfrak M}^+ [t,\infty)} \frac{ \int_0^{\infty} \bigg( \sup_{\tau \in [x,\infty)} u(\tau)^q \chi_{[t,\infty)}(\tau)\bigg( \int_{\tau}^{\infty} \vp(s) \,ds \bigg) \bigg) \, w(x) \,dx }{\bigg( \int_t^{\infty} \vp(s)^{\frac{p}{q}} s \,ds \bigg)^{\frac{q}{p}}} \right)^{\frac{1}{q}}
		\\
		& + \sup_{t \in (0,\infty)} \frac{t^{\frac{1}{p}}}{v_1(t)^{\frac{1}{m}}}  \left( \sup_{\vp \in {\mathfrak M}^+ [t,\infty)} \frac{ \int_0^{\infty} \bigg( \sup_{\tau \in [x,\infty)} \bigg(\frac{u(\tau)}{B(\tau)}\bigg)^q \chi_{[t,\infty)}(\tau)\bigg(\int_t^{\tau} \vp(s)\,ds \bigg) \bigg)\, w(x) \,dx }{\bigg( \int_t^{\infty} \vp(s)^{\frac{p}{q}} {\mathcal B}(t,s)^{\frac{q-p}{r}}\,ds \bigg)^{\frac{q}{p}}} \right)^{\frac{1}{q}}.
	\end{align*} 

	By Remark \ref{rem}, interchanging the suprema, we get that
	\begin{align*}
		K \approx & \sup_{t \in (0,\infty)} \frac{t^{\frac{1}{p}}}{v_1(t)^{\frac{1}{m}}} \bigg( \int_0^{t} \bigg (\sup_{\tau \in [x,t]} u(\tau)^q \bigg) \, w(x) \,dx\bigg)^{\frac{1}{q}} \left( \sup_{\vp \in {\mathfrak M}^+ [t,\infty)} \frac{\bigg( \int_{t}^{\infty} \vp(s) \,ds \bigg)}{\bigg( \int_t^{\infty} \vp(s)^{\frac{p}{q}} s \,ds \bigg)^{\frac{q}{p}}} \right)^{\frac{1}{q}}
		\\
		& + \sup_{t \in (0,\infty)} \frac{t^{\frac{1}{p}}}{v_1(t)^{\frac{1}{m}}} \bigg(\int_0^{t}\, w(x) \,dx\bigg)^{\frac{1}{q}}  \left( \sup_{\tau \in [t,\infty)} u(\tau)^q \sup_{\vp \in {\mathfrak M}^+ [t,\infty)} \frac{\bigg( \int_{\tau}^{\infty} \vp(s) \,ds \bigg)}{\bigg( \int_t^{\infty} \vp(s)^{\frac{p}{q}} s \,ds \bigg)^{\frac{q}{p}}} \right)^{\frac{1}{q}}
		\\
		& + \sup_{t \in (0,\infty)} \frac{t^{\frac{1}{p}}}{v_1(t)^{\frac{1}{m}}} \left( \sup_{\vp \in {\mathfrak M}^+ [t,\infty)} \frac{ \int_t^{\infty} \bigg( \sup_{\tau \in [x,\infty)} u(\tau)^q \bigg( \int_{\tau}^{\infty} \vp(s) \,ds \bigg) \bigg) \, w(x) \, dx }{\bigg( \int_t^{\infty} \vp(s)^{\frac{p}{q}} s \,ds \bigg)^{\frac{q}{p}}} \right)^{\frac{1}{q}}
		\\
		& + \sup_{t \in (0,\infty)} \frac{t^{\frac{1}{p}}}{v_1(t)^{\frac{1}{m}}} \bigg(\int_0^{t} \, w(x) \,dx\bigg)^{\frac{1}{q}} \left( \sup_{\tau \in [t,\infty)} \bigg(\frac{u(\tau)}{B(\tau)}\bigg)^q \sup_{\vp \in {\mathfrak M}^+ [t,\infty)} \frac{ \bigg(\int_t^{\tau} \vp(s)\,ds \bigg) }{\bigg( \int_t^{\infty} \vp(s)^{\frac{p}{q}} {\mathcal B}(t,s)^{\frac{q-p}{q}}\,ds \bigg)^{\frac{q}{p}}} \right)^{\frac{1}{q}}
		\\
		& + \sup_{t \in (0,\infty)} \frac{t^{\frac{1}{p}}}{v_1(t)^{\frac{1}{m}}}  \left( \sup_{\vp \in {\mathfrak M}^+ [t,\infty)} \frac{ \int_t^{\infty} \bigg( \sup_{\tau \in [x,\infty)} \bigg(\frac{u(\tau)}{B(\tau)}\bigg)^q \bigg(\int_t^{\tau} \vp(s)\,ds \bigg) \bigg) \, w(x) \, dx }{\bigg( \int_t^{\infty} \vp(s)^{\frac{p}{q}} {\mathcal B}(t,s)^{\frac{q-p}{q}}\,ds \bigg)^{\frac{q}{p}}} \right)^{\frac{1}{q}}.
	\end{align*} 

	By duality, we have that
	\begin{align*}
		K \approx & \sup_{t \in (0,\infty)} \frac{t^{\frac{1}{p}}}{v_1(t)^{\frac{1}{m}}} \bigg( \int_0^{t} \bigg( \sup_{\tau \in [x,t]} u(\tau)^q \bigg)  \, w(x) \, dx\bigg)^{\frac{1}{q}} \bigg( \int_{t}^{\infty} s^{\frac{p}{q-p}} \,ds \bigg)^{\frac{p-q}{pq}}
		\\
		& + \sup_{t \in (0,\infty)} \frac{t^{\frac{1}{p}}}{v_1(t)^{\frac{1}{m}}} \bigg(\int_0^{t} \, w(x) \,dx\bigg)^{\frac{1}{q}} \bigg( \sup_{\tau \in [t,\infty)} u(\tau)^q  \bigg( \int_{t}^{\infty} s^{\frac{p}{q-p}} \chi_{[\tau,\infty)}(s)\,ds \bigg)^{\frac{p-q}{p}} \bigg)^{\frac{1}{q}}
		\\
		& + \sup_{t \in (0,\infty)} \frac{t^{\frac{1}{p}}}{v_1(t)^{\frac{1}{m}}} \left( \sup_{\vp \in {\mathfrak M}^+ [t,\infty)} \frac{ \int_t^{\infty} \bigg( \sup_{\tau \in [x,\infty)} u(\tau)^q \bigg( \int_{\tau}^{\infty} \vp(s) \,ds \bigg) \bigg) \, w(x) \, dx }{\bigg( \int_t^{\infty} \vp(s)^{\frac{p}{q}} s \,ds \bigg)^{\frac{q}{p}}} \right)^{\frac{1}{q}}
		\\
		& + \sup_{t \in (0,\infty)} \frac{t^{\frac{1}{p}}}{v_1(t)^{\frac{1}{m}}} \bigg(\int_0^{t} \, w(x) \,dx\bigg)^{\frac{1}{q}} \bigg (\sup_{\tau \in [t,\infty)} \bigg(\frac{u(\tau)}{B(\tau)}\bigg)^q  \bigg( \int_{t}^{\infty} {\mathcal B}(t,s) \chi_{[t,\tau]}(s) \,ds \bigg)^{\frac{p-q}{p}} \bigg)^{\frac{1}{q}}
		\\
		& + \sup_{t \in (0,\infty)} \frac{t^{\frac{1}{p}}}{v_1(t)^{\frac{1}{m}}}  \left( \sup_{\vp \in {\mathfrak M}^+ [t,\infty)} \frac{ \int_t^{\infty} \bigg( \sup_{\tau \in [x,\infty)} \bigg(\frac{u(\tau)}{B(\tau)}\bigg)^q \bigg(\int_t^{\tau} \vp(s)\,ds \bigg) \bigg) \, w(x) \, dx }{\bigg( \int_t^{\infty} \vp(s)^{\frac{p}{q}} {\mathcal B}(t,s)^{\frac{q-p}{q}}\,ds \bigg)^{\frac{q}{p}}} \right)^{\frac{1}{q}}.
	\end{align*} 

	Applying Theorem \ref{thm44b} and Theorem \ref{krepelathm6b} yields
	\begin{align*}
		K \approx & \sup_{t \in (0,\infty)} \frac{1}{v_1(t)^{\frac{1}{m}}} \bigg( \int_0^{t} \bigg( \sup_{\tau \in [x,t]} u(\tau)^q \bigg) \, w(x) \,dx\bigg)^{\frac{1}{q}} 
		\\
		& + \sup_{t \in (0,\infty)} \frac{t^{\frac{1}{p}}}{v_1(t)^{\frac{1}{m}}} \bigg(\int_0^{t} \, w(x) \,dx\bigg)^{\frac{1}{q}} \bigg( \sup_{\tau \in [t,\infty)} u(\tau) \tau^{-\frac{1}{p}} \bigg)
		\\
		& + \sup_{t \in (0,\infty)} \frac{t^{\frac{1}{p}}}{v_1(t)^{\frac{1}{m}}}  \bigg( \int_t^{\infty} \bigg(\sup_{\tau \in [s,\infty)} u(\tau)^{\frac{pq}{p-q}} \tau^{\frac{q}{q-p}}\bigg) \bigg(\int_{t}^{s} w(x) \, dx \bigg)^{\frac{q}{p-q}} w(s) \,ds\bigg)^{\frac{p-q}{pq}}
		\\
		& + \sup_{t \in (0,\infty)} \frac{t^{\frac{1}{p}}}{v_1(t)^{\frac{1}{m}}}  \bigg(\int_t^{\infty} \bigg(\int_{t}^{s} \bigg( \sup_{y\in [x,s]} u(y)^q \bigg) w(x) \,dx\bigg)^{\frac{q}{p-q}} \bigg(\sup_{\tau \in [s,\infty)} u(\tau)^q \tau^{\frac{q}{q-p}}\bigg)w(s)\, ds\bigg)^{\frac{p-q}{pq}}
		\\
		& + \sup_{t \in (0,\infty)} \frac{t^{\frac{1}{p}}}{v_1(t)^{\frac{1}{m}}} \bigg(\int_0^{t} w(x)\,dx\bigg)^{\frac{1}{q}} \bigg( \sup_{\tau \in [t,\infty)} \frac{u(\tau)}{B(\tau)} \bigg( \int_{t}^{\tau} {\mathcal B}(t,s) \,ds \bigg)^{\frac{p-q}{pq}} \bigg)
		\\
		& + \sup_{t \in (0,\infty)} \frac{t^{\frac{1}{p}}}{v_1(t)^{\frac{1}{m}}}  \bigg( \int_t^{\infty} \bigg[\sup_{\tau \in [x,\infty)}\bigg[\sup_{y \in [\tau,\infty)} \bigg(\frac{u(y)}{B(y)}\bigg)^\frac{pq}{p-q}\bigg]\bigg(\int_{t}^{\tau} \mathcal{B}(t,s) \,ds \bigg) \bigg]\bigg(\int_{t}^{x} w(y)\,dy \bigg)^{\frac{q}{p-q}} w(x) \,dx \bigg)^{\frac{p-q}{pq}}
		\\
		& + \sup_{t \in (0,\infty)} \frac{t^{\frac{1}{p}}}{v_1(t)^{\frac{1}{m}}}  \bigg( \int_t^{\infty}\bigg(\int_{x}^{\infty} \bigg[\sup_{\tau \in [y,\infty)}\bigg(\frac{u(\tau)}{B(\tau)}\bigg)^q\bigg]w(y)\, dy\bigg)^{\frac{q}{p-q}} \bigg[\sup_{\tau \in [x,\infty)}\bigg(\frac{u(\tau)}{B(\tau)}\bigg)^q\bigg]\bigg(\int_{t}^{x}\mathcal{B}(t,s) \,ds \bigg) w(x) \,dx\bigg)^{\frac{p-q}{pq}}.
	\end{align*}
	
The proof is completed.
\end{proof}

\begin{theorem}\label{3stresult}
	Let $1 < m < \infty , 0 < p \le 1, 1 < q < \infty$ and $b \in \W (0,\infty) \cap \mp^+ ((0,\infty);\dn)$ be such that the function $B(t)$ satisfies  $0 < B(t) < \infty$ for every $t \in (0,\infty)$. Suppose that $u \in \W(0,\infty) \cap C(0,\infty)$, $v \in \W_{m.p}(0,\infty)$ and $w \in \W(0,\infty)$.
		\begin{itemize}
			\item[i)] If $m \le q$, then
			\begin{align*}
				K \approx & \sup_{t \in (0,\infty)} \bigg( \int_t^{\infty} \frac{v_0(s)}{v_1(s)^{m' + 1}}\,ds \bigg)^{\frac{1}{m'}}  \bigg( \int_0^{t} \bigg( \sup_{\tau \in [x,t]} u(\tau)^q \bigg)\,w(x)\,dx\bigg)^{\frac{1}{q}} 
				\\
				& + \sup_{t \in (0,\infty)} \bigg( \int_0^t \frac{B(s)^{m'}v_0(s)}{v_1(s)^{m' + 1}}\,ds \bigg)^{\frac{1}{m'}} \bigg( \int_0^{t} w(x)\,dx\bigg)^{\frac{1}{q}} \bigg( \sup_{\tau \in[t,\infty)}\frac{u(\tau)}{B(\tau)} \bigg) 
				\\
				& + \sup_{t \in (0,\infty)} \bigg( \int_0^t \frac{B(s)^{m'}v_0(s)}{v_1(s)^{m' + 1}}\,ds \bigg)^{\frac{1}{m'}} \bigg( \int_t^{\infty} \bigg( \sup_{\tau \in[x,\infty)} \bigg( \frac{u(\tau)}{B(\tau)} \bigg)^q \bigg) w(x)\,dx\bigg)^{\frac{1}{q}};
			\end{align*}
			
			\item[ii)] If $q < m$, then
			\begin{align*}
			K \approx & \,  \bigg(\int_{0}^{\infty}\bigg[\sup_{\tau \in [t,\infty)}u(\tau)^\frac{mq}{m-q}\bigg(\int_{\tau}^{\infty}\mathfrak{B}_1 (s) \,ds \bigg)\bigg]\bigg(\int_{0}^{t}w(x)\,dx\bigg)^\frac{q}{m-q}w(t) \, dt\bigg)^\frac{m-q}{mq}
			\\
			& + \bigg(\int_{0}^{\infty}\bigg(\int_{0}^{t}\bigg[\sup_{y \in [x, t]}u(y)^q\bigg]\,w(x)\,dx\bigg)^\frac{q}{m-q} \bigg[\sup_{\tau \in [t,\infty)}u(\tau)^q\bigg(\int_{\tau}^{\infty}\mathfrak{B}_1 (s) \,ds \bigg)\bigg]w(t) \, dt\bigg)^\frac{m-q}{mq}
			\\
			& + \bigg(\int_{0}^{\infty}\bigg[\sup_{\tau \in [t,\infty)}\bigg[\sup_{s \in [\tau,\infty)}\bigg(\frac{u(s)}{B(s)}\bigg)^{\frac{mq}{m-q}} \bigg]\bigg(\int_{0}^{\tau}\mathfrak{B}_2 (s) \,ds \bigg)\bigg] \bigg(\int_{0}^{t}w(x)\,dx\bigg)^\frac{q}{m-q} w(t) \, dt\bigg)^\frac{m-q}{mq}
			\\
			& + \bigg(\int_{0}^{\infty}\bigg(\int_{t}^{\infty}\bigg[\sup_{\tau \in [x,\infty) }\bigg(\frac{u(\tau)}{B(\tau)}\bigg)^q\bigg] w(x)\,dx\bigg)^\frac{q}{m-q} \, \bigg[\sup_{\tau \in [t,\infty)}\bigg(\frac{u(\tau)}{B(\tau)}\bigg)^q\bigg] \bigg(\int_{0}^{t}\mathfrak{B}_2 (s) \,ds \bigg) w(t) \,dt\bigg)^\frac{m-q}{mq},
			\end{align*}  
		where functions $\mathfrak{B}_1$ and $\mathfrak{B}_2$ are defined for all $s \in (0,\infty)$ by
		$$
		\mathfrak{B}_1(s) : = \bigg( \int_s^{\infty} \frac{v_0(t)}{v_1(t)^{m' + 1}}\,dt\bigg)^{\frac{m(q-1)}{m - q}} \frac{v_0(s)}{v_1(s)^{m' + 1}}, \quad 
		\mathfrak{B}_2(s) : = \bigg( \int_0^s \frac{B(t)^{m'}v_0(t)}{v_1(t)^{m' + 1}}\,dt \bigg)^{\frac{m(q-1)}{m - q}} \frac{B(s)^{m'}v_0(s)}{v_1(s)^{m' + 1}},
		$$	
		respectively.
		\end{itemize}
\end{theorem}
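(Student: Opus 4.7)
The plan follows the template set in the proofs of Theorems \ref{1stresult} and \ref{2stresult}, but now using part (iii) of Theorem \ref{assosGG} since $1 < m < \infty$ and $0 < p \le 1$. First I would apply Lemma \ref{redlemma} and then Theorem \ref{assosGG}(iii) to the innermost supremum over $f$. The integrand $g(y) := b(y) \int_y^{\infty} \varphi(x) u(x)/B(x)\,dx$ is the product of two non-increasing factors and is therefore itself non-increasing, so $g^{**}(t) = \frac{1}{t}\int_0^t g$. Fubini's theorem gives
$$
g^{**}(t) = \frac{1}{t}\bigg[\int_0^t \varphi(x) u(x)\,dx + B(t) \int_t^{\infty} \varphi(x)\frac{u(x)}{B(x)}\,dx\bigg].
$$
Using $(a+b)^{m'} \approx a^{m'} + b^{m'}$ then splits $K \approx K_1 + K_2$, where $K_1$ carries the weight $\omega(t) := v_0(t)/v_1(t)^{m'+1}$ with the Hardy-type kernel $\int_0^t \varphi u$, and $K_2$ carries the weight $B(t)^{m'}\omega(t)$ with the Copson-type kernel $\int_t^{\infty} \varphi u/B$.

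After the substitutions $f = \varphi u$ in $K_1$ and $f = \varphi u/B$ in $K_2$, each piece is, for fixed $h$, the best constant in a classical Hardy or Copson inequality. The identity $(1-q)(1-q') = 1$ collapses $v^{1-q'}$ to $u^q h$ for $K_1$ and to $(u/B)^q h$ for $K_2$. In case (i), $m \le q$ means $q' \le m'$, so part (a) of Theorems \ref{thm.Hardy} and \ref{thm.Copson} applies and yields $\sup_t$-conditions whose $h$-dependence is concentrated inside expressions like $(\int_0^t u^q h)^{1/q}$ and $(\int_t^{\infty}(u/B)^q h)^{1/q}$. Interchanging $\sup_t$ with $\sup_h$ and invoking Theorem \ref{transfermon} together with Remark \ref{rem} eliminates $h$ and produces exactly the three terms stated.

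Case (ii), $q < m$, is the substantive one. Part (b) of Theorems \ref{thm.Hardy}, \ref{thm.Copson} gives
$$
K_1 \approx \bigg(\int_0^{\infty} \mathfrak{B}_1(t) \bigg(\int_0^t u^q h\bigg)^{\frac{m}{m-q}} dt\bigg)^{\frac{m-q}{mq}}, \qquad K_2 \approx \bigg(\int_0^{\infty} \mathfrak{B}_2(t) \bigg(\int_t^{\infty} (u/B)^q h\bigg)^{\frac{m}{m-q}} dt\bigg)^{\frac{m-q}{mq}},
$$
once one converts $m'/(q'-m') = m(q-1)/(m-q)$, $m'(q'-1)/(q'-m') = m/(m-q)$ and $(q'-m')/(q'm') = (m-q)/(mq)$. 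To strip off $h$, I would apply the duality identity $(\int F^{\alpha}\eta)^{1/\alpha} = \sup_{\psi}\int F\psi / (\int \psi^{\alpha'}\eta^{1-\alpha'})^{1/\alpha'}$ with $\alpha = m/(m-q)$, $\alpha' = m/q$, precisely as in the second half of the proof of Theorem \ref{2stresult}. Because $(1-\alpha)(1-\alpha') = 1$, the weights $\mathfrak{B}_i^{1-\alpha}$ invert back to $\mathfrak{B}_i$ inside the denominator. Fubini and Theorem \ref{transfermon} (applied in $h$ with the remaining kernel $u(\tau)^q\int_{\tau}^{\infty}\psi$, respectively $(u/B)^q(\tau)\int_0^{\tau}\psi$) reduce $K_1$ and $K_2$ to the best constants in
$$
\int_0^{\infty}\! w(x) \sup_{\tau \in [x,\infty)} u(\tau)^q \!\!\int_{\tau}^{\infty}\!\psi \,dx \le C\bigg(\int \psi^{m/q}\mathfrak{B}_1^{1-m/q}\bigg)^{q/m}, \quad \int_0^{\infty}\!w(x)\sup_{\tau \in [x,\infty)}\!\bigg(\tfrac{u}{B}\bigg)^q\!\!(\tau)\!\!\int_0^{\tau}\!\psi\, dx \le C\bigg(\int \psi^{m/q}\mathfrak{B}_2^{1-m/q}\bigg)^{q/m}.
$$
These are exactly the hypotheses of Theorem \ref{krepelathm6b} and Theorem \ref{thm44b}, respectively, with $p = m/q$ and the $\sup$-factor playing the role of $u$. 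Their conditions $E_1 + E_2$ and $D_1 + D_2$, raised to the $1/q$-th power and split by $(a+b)^{1/q}\approx a^{1/q} + b^{1/q}$, yield precisely the four terms listed in case (ii).

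The main obstacles are bookkeeping rather than conceptual: handling the chain duality/Fubini/Theorem \ref{transfermon}/supremum-operator-theorem, and carefully tracking the exponent conversions (in particular verifying that $(1-\alpha)(1-\alpha') = 1$ restores $\mathfrak{B}_i$ and that $\tilde{u}^{\beta'} = u^{mq/(m-q)}$ or $(u/B)^{mq/(m-q)}$ as required in the statement of the theorem). Once the correct reformulations are in place, Theorems \ref{thm44b} and \ref{krepelathm6b} apply verbatim with $t = 0$.
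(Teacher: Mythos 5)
Your proposal is correct and follows essentially the same route as the paper's proof: Lemma \ref{redlemma}, Theorem \ref{assosGG}(iii), Fubini to split $G^{**}$ into the Hardy and Copson pieces (using that $b$ non-increasing makes the inner integrand non-increasing), $(a+b)^{m'}\approx a^{m'}+b^{m'}$, Theorems \ref{thm.Hardy}/\ref{thm.Copson} with the exponent collapse $(u^{-q'}h^{1-q'})^{1-q}=u^qh$, then for $q<m$ the further duality with $\alpha'=m/q$, Fubini, Theorem \ref{transfermon}, and finally Theorem \ref{krepelathm6b} (for the $\int_\tau^\infty\vp$ piece) and Theorem \ref{thm44b} (for the $\int_0^\tau\vp$ piece) with $p=m/q$; your exponent conversions $m'/(q'-m')=m(q-1)/(m-q)$, $m'(q'-1)/(q'-m')=m/(m-q)$, $(q'-m')/(q'm')=(m-q)/(mq)$ all check out.
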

\begin{proof}
	By  Lemma \ref{redlemma},Theorem \ref{assosGG}, (iii), and Fubini's Theorem, we have that
		\begin{align*}
		K \approx & \sup_{h:\, \int_0^x h \le \int_0^x w} \sup_{\vp \in \mathfrak{M}^{+}} \frac{1}{\|\vp\|_{q',h^{1-q'},(0,\infty)}}
		\bigg( \int_0^{\infty} \bigg( \frac{1}{t} \int_0^t b(y) \int_y^{\infty} \vp(x) \frac{u(x)}{B(x)} \,dx \,dy \bigg)^{m'} \frac{t^{m'} v_0(t)}{v_1(t)^{m' + 1}}\,dt \bigg)^{\frac{1}{m'}} \\
		\approx &  \sup_{h:\, \int_0^x h \le \int_0^x w}  \sup_{\vp \in \mathfrak{M}^{+}} \frac{1}{\|\vp\|_{q',h^{1-q'},(0,\infty)}} \bigg( \int_0^{\infty} \bigg( \int_0^t \vp(x) u(x)\,dx\bigg)^{m'} \frac{v_0(t)}{v_1(t)^{m' + 1}}\,dt \bigg)^{\frac{1}{m'}} 
		\\
		& + \sup_{h:\, \int_0^x h \le \int_0^x w} \sup_{\vp \in \mathfrak{M}^{+}} \frac{1}{\|\vp\|_{q',h^{1-q'},(0,\infty)}} \bigg( \int_0^{\infty} \bigg(\int_t^{\infty} \vp(x) \frac{u(x)}{B(x)}\,dx\bigg)^{m'} \frac{B(t)^{m'}v_0(t)}{v_1(t)^{m' + 1}}\,dt \bigg)^{\frac{1}{m'}}.
	\end{align*} 
	
	Let $m \le q$. By Theorems \ref{thm.Hardy} and \ref{thm.Copson}, we have that
	\begin{align*}
		K \approx & \sup_{h:\, \int_0^x h \le \int_0^x w} \sup_{t \in (0,\infty)} \bigg( \int_t^{\infty} \frac{v_0(s)}{v_1(s)^{m' + 1}}\,ds \bigg)^{\frac{1}{m'}} \bigg( \int_0^t h(\tau) u(\tau)^q\,d\tau\bigg)^{\frac{1}{q}} 
		\\
		& + \sup_{h:\, \int_0^x h \le \int_0^x w} \sup_{t \in (0,\infty)} \bigg( \int_0^t \frac{B(s)^{m'}v_0(s)}{v_1(s)^{m' + 1}}\,ds \bigg)^{\frac{1}{m'}} \bigg( \int_t^{\infty} h (\tau) \bigg(\frac{u(\tau)}{B(\tau)}\bigg)^q\,d\tau\bigg)^{\frac{1}{q}}.
	\end{align*} 	
	
	Interchanging the suprema, using Theorem \ref{transfermon} and Remark \ref{rem}, we arrive at
	\begin{align*}
		K \approx & \sup_{t \in (0,\infty)} \bigg( \int_t^{\infty} \frac{v_0(s)}{v_1(s)^{m' + 1}}\,ds \bigg)^{\frac{1}{m'}} \sup_{h:\, \int_0^x h \le \int_0^x w}  \bigg( \int_0^{\infty} h(\tau) u(\tau)^q \chi_{(0,t]}(\tau)\,d\tau\bigg)^{\frac{1}{q}} 
		\\
		& + \sup_{t \in (0,\infty)} \bigg( \int_0^t \frac{B(s)^{m'}v_0(s)}{v_1(s)^{m' + 1}}\,ds \bigg)^{\frac{1}{m'}}
		\sup_{h:\, \int_0^x h \le \int_0^x w}  \bigg( \int_0^{\infty} h (\tau) \bigg(\frac{u(\tau)}{B(\tau)}\bigg)^q\chi_{[t,\infty)}(\tau)\,d\tau\bigg)^{\frac{1}{q}} \\
		\approx & \sup_{t \in (0,\infty)} \bigg( \int_t^{\infty} \frac{v_0(s)}{v_1(s)^{m' + 1}}\,ds \bigg)^{\frac{1}{m'}}  \bigg( \int_0^{\infty} \bigg( \sup_{\tau \in [x,\infty)} u(\tau)^q \chi_{(0,t]}(\tau) \bigg)\,w(x)\,dx\bigg)^{\frac{1}{q}} 
		\\
		& + \sup_{t \in (0,\infty)} \bigg( \int_0^t \frac{B(s)^{m'}v_0(s)}{v_1(s)^{m' + 1}}\,ds \bigg)^{\frac{1}{m'}} \bigg( \int_0^{\infty}
		\bigg(  \sup_{\tau \in [x,\infty)} \bigg(\frac{u(\tau)}{B(\tau)}\bigg)^q \chi_{[t,\infty)}(\tau)\bigg)\, w(x)\,dx\bigg)^{\frac{1}{q}} \\
		\approx & \sup_{t \in (0,\infty)} \bigg( \int_t^{\infty} \frac{v_0(s)}{v_1(s)^{m' + 1}}\,ds \bigg)^{\frac{1}{m'}}  \bigg( \int_0^{t} \bigg( \sup_{\tau \in [x,t]} u(\tau)^q \bigg)\,w(x)\,dx\bigg)^{\frac{1}{q}} 
		\\
		& + \sup_{t \in (0,\infty)} \bigg( \int_0^t \frac{B(s)^{m'}v_0(s)}{v_1(s)^{m' + 1}}\,ds \bigg)^{\frac{1}{m'}} \bigg( \int_0^{t} w(x)\,dx\bigg)^{\frac{1}{q}} \bigg( \sup_{\tau \in [t,\infty)} \frac{u(\tau)}{B(\tau)} \bigg)
		\\
		& + \sup_{t \in (0,\infty)} \bigg( \int_0^t \frac{B(s)^{m'}v_0(s)}{v_1(s)^{m' + 1}}\,ds \bigg)^{\frac{1}{m'}} \bigg( \int_t^{\infty} \bigg( \sup_{\tau \in [x,\infty)} \bigg(\frac{u(\tau)}{B(\tau)}\bigg)^q \bigg) w(x)\,dx\bigg)^{\frac{1}{q}}.
	\end{align*} 
	
	Let now $q < m$. By Theorems \ref{thm.Hardy} and \ref{thm.Copson}, we have that
	\begin{align*}
		K \approx & \sup_{h:\, \int_0^x h \le \int_0^x w} \bigg(\int_0^{\infty} \mathfrak{B}_1(s) \bigg( \int_0^s h(\tau) u(\tau)^q \,d\tau \bigg)^{\frac{m}{m - q}} \,ds\bigg)^{\frac{m-q}{mq}} 
		\\
		& + \sup_{h:\, \int_0^x h \le \int_0^x w} \bigg(\int_0^{\infty} \mathfrak{B}_2(s) \bigg( \int_s^{\infty} h(\tau) \bigg(\frac{u(\tau)}{B(\tau)}\bigg)^q \,d\tau \bigg)^{\frac{m}{m - q}} \,ds \bigg)^{\frac{m-q}{mq}}.
	\end{align*} 
	
	By duality, we get that 
	\begin{align*}
		K \approx & \sup_{h:\, \int_0^x h \le \int_0^x w} \left( \sup_{\vp \in \mathfrak{M}^{+}} \frac{\int_0^{\infty} \vp(s) \bigg( \int_0^s h(\tau) u(\tau)^q \,d\tau \bigg)\,ds}{\bigg( \int_0^{\infty} \vp(s)^{\frac{m}{q}} \mathfrak{B}_1(s)^{\frac{q-m}{q}}\,ds \bigg)^{\frac{q}{m}}}
		\right)^{\frac{1}{q}}
		\\
		& + \sup_{h:\, \int_0^x h \le \int_0^x w}  \left( \sup_{\vp \in \mathfrak{M}^{+}} \frac{\int_0^{\infty} \vp(s) \bigg(\int_s^{\infty} h(\tau) \bigg(\frac{u(\tau)}{B(\tau)}\bigg)^q \,d\tau \bigg)\,ds}{\bigg( \int_0^{\infty} \vp(s)^{\frac{m}{q}} \mathfrak{B}_2(s)^{\frac{q-m}{q}}\,ds \bigg)^{\frac{q}{m}}} \right)^{\frac{1}{q}}.
	\end{align*} 
	
	By Fubini's Theorem, interchanging the suprema, on using Theorem \ref{transfermon}, we arrive at
	\begin{align*}
		K \approx & \left( \sup_{\vp \in \mathfrak{M}^{+}} \frac{ \sup_{h:\, \int_0^x h \le \int_0^x w} \int_0^{\infty} h(\tau) u(\tau)^q \bigg( \int_{\tau}^{\infty} \vp(s) \,ds \bigg) \,d\tau}{\bigg( \int_0^{\infty} \vp(s)^{\frac{m}{q}} \mathfrak{B}_1(s)^{\frac{q-m}{q}}\,ds \bigg)^{\frac{q}{m}}} \right)^{\frac{1}{q}}
		\\
		& + \left( \sup_{\vp \in \mathfrak{M}^{+}} \frac{\sup_{h:\, \int_0^x h \le \int_0^x w} \int_0^{\infty} h(\tau) \bigg(\frac{u(\tau)}{B(\tau)}\bigg)^q \bigg(\int_0^{\tau} \vp(s) \,ds\bigg)\,d\tau }{\bigg( \int_0^{\infty} \vp(s)^{\frac{m}{q}} \mathfrak{B}_2(s)^{\frac{q-m}{q}}\,ds \bigg)^{\frac{q}{m}}} \right)^{\frac{1}{q}} \\
        \approx & \left( \sup_{\vp \in \mathfrak{M}^{+}} \frac{ \int_0^{\infty} \bigg(\sup_{\tau \in [x,\infty)} u(\tau)^q \bigg( \int_{\tau}^{\infty} \vp(s) \,ds \bigg) \bigg) w(x)\,dx }{\bigg(\int_0^{\infty} \vp(s)^{\frac{m}{q}} \mathfrak{B}_1(s)^{\frac{q-m}{q}}\,ds \bigg)^{\frac{q}{m}}}\right)^{\frac{1}{q}}
		\\
		& + \left( \sup_{\vp \in \mathfrak{M}^{+}} \frac{ \int_0^{\infty} \bigg( \sup_{\tau \in [x,\infty)} \bigg(\frac{u(\tau)}{B(\tau)}\bigg)^q \bigg(\int_0^{\tau} \vp(s) \,ds\bigg) \bigg) w(x)\,dx}{\bigg( \int_0^{\infty} \vp(s)^{\frac{m}{q}} \mathfrak{B}_2(s)^{\frac{q-m}{q}}\,ds \bigg)^{\frac{q}{m}}}\right)^{\frac{1}{q}}.
	\end{align*} 
	
	Applying Theorem \ref{thm44b} and Theorem \ref{krepelathm6b} yields
	\begin{align*}
		K \approx & \,  \bigg(\int_{0}^{\infty}\bigg[\sup_{\tau \in [t,\infty)}u(\tau)^\frac{mq}{m-q}\bigg(\int_{\tau}^{\infty}\mathfrak{B}_1 (s) \,ds \bigg)\bigg]\bigg(\int_{0}^{t}w(x)\,dx\bigg)^\frac{q}{m-q}w(t) \, dt\bigg)^\frac{m-q}{mq}
		\\
		& + \bigg(\int_{0}^{\infty}\bigg(\int_{0}^{t}\bigg[\sup_{y \in [x, t]}u(y)^q\bigg]\,w(x)\,dx\bigg)^\frac{q}{m-q} \bigg[\sup_{\tau \in [t,\infty)}u(\tau)^q\bigg(\int_{\tau}^{\infty}\mathfrak{B}_1 (s) \,ds \bigg)\bigg]w(t) \, dt\bigg)^\frac{m-q}{mq}
		\\
		& + \bigg(\int_{0}^{\infty}\bigg[\sup_{\tau \in [t,\infty)}\bigg[\sup_{s \in [\tau,\infty)}\bigg(\frac{u(s)}{B(s)}\bigg)^{\frac{mq}{m-q}} \bigg]\bigg(\int_{0}^{\tau}\mathfrak{B}_2 (s) \,ds \bigg)\bigg] \bigg(\int_{0}^{t}w(x)\,dx\bigg)^\frac{q}{m-q} w(t) \, dt\bigg)^\frac{m-q}{mq}
		\\
		& + \bigg(\int_{0}^{\infty}\bigg(\int_{t}^{\infty}\bigg[\sup_{\tau \in [x,\infty) }\bigg(\frac{u(\tau)}{B(\tau)}\bigg)^q\bigg] w(x)\,dx\bigg)^\frac{q}{m-q} \, \bigg[\sup_{\tau \in [t,\infty)}\bigg(\frac{u(\tau)}{B(\tau)}\bigg)^q\bigg] \bigg(\int_{0}^{t}\mathfrak{B}_2 (s) \,ds \bigg) w(t) \,dt\bigg)^\frac{m-q}{mq}.
	\end{align*}
	
	The proof is completed. 
\end{proof}

\begin{theorem}\label{4stresult}
	Let $1 < m < \infty , 1 < p < \infty, 1 < q < \infty$ and $b \in \W (0,\infty) \cap \mp^+ ((0,\infty);\dn)$ be such that the function $B(t)$ satisfies  $0 < B(t) < \infty$ for every $t \in (0,\infty)$. Assume that $u \in \W(0,\infty) \cap C(0,\infty)$, $v \in \W_{m,p}(0,\infty)$ and $w \in \W(0,\infty)$.
	Suppose that
    \begin{gather*}
    \int_0^t v_2(s)\,ds < \infty, \quad \int_t^{\infty} s^{-\frac{m'}{p}} v_2(s)\,ds < \infty, \qquad 0 < \int_0^t \bigg( \int_s^t \bigg( \frac{B(y)}{y} \bigg)^{p'}\,dy \bigg)^{\frac{m'}{p'}} v_2(s)\,ds < \infty, \qquad t \in (0,\infty), \\
    \int_0^1 s^{-\frac{m'}{p}} v_2(s)\,ds = \int_1^{\infty} v_2(s) \,ds = \infty,
    \end{gather*}
    where the function $v_2$ is defined by
	\begin{equation*}
		v_2(t) : = \frac{t^{\frac{m'}{p'}}v_0(t)}{v_1(t)^{m' + 1}}, \qquad t \in (0,\infty).
	\end{equation*} 
		
	\item[i)] If  $\max\{p,\,m\} \le q$, then
	\begin{align*}
		K \approx & \sup_{t \in (0,\infty)}\bigg( \int_0^t \bigg( \int_s^t \bigg( \frac{B(y)}{y} \bigg)^{p'} \, dy \bigg)^{\frac{m'}{p'}} v_2(s) \, ds \bigg)^{\frac{1}{m'}} 
		\bigg( \int_0^{t} w(x) \, dx 	\bigg)^{\frac{1}{q}} \bigg( \sup_{\tau \in [t,\infty)} \frac{u(\tau)}{B(\tau)} \bigg)
		\\
		& + \sup_{t \in (0,\infty)}\bigg( \int_0^t \bigg( \int_s^t \bigg( \frac{B(y)}{y} \bigg)^{p'} \, dy \bigg)^{\frac{m'}{p'}} v_2(s) \, ds \bigg)^{\frac{1}{m'}} 
		\bigg(\int_t^{\infty} \bigg( \sup_{\tau \in [x,\infty)} \bigg( \frac{u(\tau)}{B(\tau)} \bigg)^q \bigg) w(x) \,dx \bigg)^{\frac{1}{q}}
		\\
		& + \sup_{t \in (0,\infty)}  \bigg( \int_0^{\infty} \bigg( \frac{1}{s + t}\bigg)^{\frac{m'}{p}} v_2(s) \, ds \bigg)^{\frac{1}{m'}} 
		\bigg(\int_0^{t} \bigg( \sup_{\tau \in [x,t]} u(\tau)^q \bigg) w(x) \,dx \bigg)^{\frac{1}{q}}.
	\end{align*}

	\item[ii)] If $m \le q < p$, then
	\begin{align*}
		K \approx & \sup_{t \in (0,\infty)}\bigg( \int_0^t \bigg( \int_s^t \bigg( \frac{B(y)}{y} \bigg)^{p'} \, dy \bigg)^{\frac{m'}{p'}} v_2(s) \, ds \bigg)^{\frac{1}{m'}} 
		\bigg( \int_0^{t} w(x) \, dx 	\bigg)^{\frac{1}{q}} \bigg( \sup_{\tau \in [t,\infty)} \frac{u(\tau)}{B(\tau)} \bigg) 
		\\
		& + \sup_{t \in (0,\infty)}\bigg( \int_0^t \bigg( \int_s^t \bigg( \frac{B(y)}{y} \bigg)^{p'} \, dy \bigg)^{\frac{m'}{p'}} v_2(s) \, ds \bigg)^{\frac{1}{m'}} 
		\bigg(\int_t^{\infty} \bigg( \sup_{\tau \in [x,\infty)} \bigg( \frac{u(\tau)}{B(\tau)} \bigg)^q \bigg) w(x) \,dx \bigg)^{\frac{1}{q}}
		\\
		& + \sup_{t \in (0,\infty)}\bigg( \int_0^t v_2 \bigg)^{\frac{1}{m'}} \bigg(\int_0^{t} w(x) \, dx\bigg)^{\frac{1}{q}} \bigg( \sup_{\tau \in [t,\infty)} \frac{u(\tau)}{B(\tau)} \bigg( \int_{t}^{\tau} {\mathcal B}(t,s) \,ds \bigg)^{\frac{p-q}{pq}} \bigg)
		\\
		& + \sup_{t \in (0,\infty)}\bigg( \int_0^t v_2 \bigg)^{\frac{1}{m'}}  \bigg( \int_t^{\infty} \bigg[\sup_{\tau \in [s,\infty)}\bigg[\sup_{x \in [\tau,\infty)} \bigg(\frac{u(x)}{B(x)}\bigg)^\frac{pq}{p-q}\bigg]\bigg(\int_{t}^{\tau} \mathcal{B}(t,y) \,dy\bigg)\bigg]\bigg(\int_{t}^{s}w(y)\,dy \bigg)^{\frac{q}{p-q}} w(s) \,ds\bigg)^{\frac{p-q}{pq}}
		\\
		& + \sup_{t \in (0,\infty)}\bigg( \int_0^t v_2 \bigg)^{\frac{1}{m'}}  \bigg( \int_t^{\infty}\bigg(\int_{s}^{\infty} \bigg[\sup_{\tau \in [x,\infty)}\bigg(\frac{u(\tau)}{B(\tau)}\bigg)^q\bigg]w(x) \, dx\bigg)^{\frac{q}{p-q}} \bigg[\sup_{\tau \in [s,\infty)}\bigg(\frac{u(\tau)}{B(\tau)}\bigg)^q\bigg]\bigg(\int_{t}^{s}\mathcal{B}(t,y) \,dy \bigg) w(s) \,ds\bigg)^{\frac{p-q}{pq}}
		\\
		& + \sup_{t \in (0,\infty)} \bigg(\int_{0}^{\infty} \bigg(\frac{t}{s + t}\bigg)^{\frac{m'}{p}} v_2(s) \, ds\bigg)^\frac{1}{m'}\bigg(\int_{0}^{\infty}\bigg[\sup_{\tau \in [t,\infty)}u(\tau)^\frac{pq}{p-q}(\tau+t)^\frac{q}{q-p}\bigg]\bigg(\int_{0}^{t}w(x)\,dx\bigg)^\frac{q}{p-q} w(t)\, dt\bigg)^\frac{p-q}{pq}
		\\
		& + \sup_{t \in (0,\infty)} \bigg(\int_{0}^{\infty} \bigg(\frac{t}{s + t}\bigg)^{\frac{m'}{p}} v_2(s) \, ds\bigg)^\frac{1}{m'}\bigg(\int_{0}^{\infty}\bigg(\int_{0}^{t}\bigg[\sup_{y \in [x, t]}u(y)^q\bigg]\,w(x)\,dx\bigg)^\frac{q}{p-q} \bigg[\sup_{\tau \in [t,\infty)}u(\tau)^q(\tau+t)^\frac{q}{q-p}\bigg] w(t) \, dt\bigg)^\frac{p-q}{pq}
	\end{align*}
\end{theorem}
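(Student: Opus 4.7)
The plan is to follow the scheme of Theorems~\ref{1stresult}--\ref{3stresult}: reduce via Lemma~\ref{redlemma} and the relevant case of Theorem~\ref{assosGG}, characterize the resulting weighted iterated Hardy-type inequality using the background results of Section~\ref{BM}, and clean up with Theorem~\ref{transfermon}, Remark~\ref{rem}, and an elementary gluing. The new feature is that part~(iv) of Theorem~\ref{assosGG} (valid under $v\in\W_{m,p}(0,\infty)$) produces a genuinely iterated, rather than one-level, expression, and in case~(ii) the Hardy branch forces the more involved formula $F_3$ from Theorem~\ref{gks}.

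Starting from Lemma~\ref{redlemma}, the innermost supremum over $f$ is evaluated by Theorem~\ref{assosGG}(iv). Since $b$ is non-increasing, the function $y\mapsto b(y)\int_y^{\infty}\vp(x)u(x)/B(x)\,dx$ is non-increasing, so its $**$-rearrangement is its own Hardy average; Fubini in the inner integral gives
\[
\frac{1}{s}\int_0^s b(y)\int_y^{\infty}\vp(x)\frac{u(x)}{B(x)}\,dx\,dy \;=\; \frac{1}{s}\int_0^s \vp(x)u(x)\,dx \;+\; \frac{B(s)}{s}\int_s^{\infty}\vp(x)\frac{u(x)}{B(x)}\,dx.
\]
Estimating the $p'$-th power of the sum by the sum of the $p'$-th powers splits $K$ equivalently into a Hardy-type part $K_H$ and a Copson-type part $K_C$; for each the supremum over $\vp$ with $\|\vp\|_{q',h^{1-q'},(0,\infty)}\le 1$ is a weighted iterated Hardy-type inequality with parameter triple $(q',p',m')$.

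Theorem~\ref{gks} is applied to $K_H$ with the substitutions $u(s)\to s^{-p'}$, $w(t)\to v_2(t)$, $v(y)\to u(y)^{-q'}h(y)^{1-q'}$ (so that $v^{1-p'}=u^q h$): the condition $\max\{p,m\}\le q$ becomes $q'\le\min\{p',m'\}$ (case~(a) of Theorem~\ref{gks}) and yields $F_1+F_2$, while $m\le q<p$ becomes $p'<q'\le m'$ (case~(b)) and yields $F_2+F_3$. Analogously, Theorem~\ref{krepick} is applied to $K_C$ with $u(s)\to(B(s)/s)^{p'}$, producing $G_1$ in case~(i) and $G_1+G_2$ in case~(ii). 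The non-degeneracy hypotheses on $v_2$ stated in the theorem are exactly those required to apply Theorems~\ref{gks} and~\ref{krepick}.

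What remains is the $h$-supremum subject to $\int_0^x h\le\int_0^x w$. For $F_1$, $F_2$, $G_1$ the dependence on $h$ is linear through $\int_0^t u^q h$ or $\int_t^{\infty}(u/B)^q h$, and Theorem~\ref{transfermon} with Remark~\ref{rem} converts these directly into $\int_0^t\sup_{\tau\in[x,t]}u(\tau)^q\,w(x)\,dx$ and the two-part decomposition matching Terms~1 and~2. For $G_2$ the integrand contains $(\int_s^{\infty}(u/B)^q h)^{p/(p-q)}$: $L^{p/q}$-duality introduces a test function $\psi$, a Fubini swap exposes $h$ linearly as $\int_t^{\infty}(u/B)^q(y)h(y)(\int_t^y\psi)\,dy$, and Theorem~\ref{transfermon}/Remark~\ref{rem} split the $h$-transfer into an $x\le t$ piece (reducing after a H\"older-type evaluation of $\sup_\psi$ to Term~3) and an $x>t$ piece, the latter being a supremum-operator inequality handled by Theorem~\ref{thm44b} with $F=(u/B)^q$, $W=w$, $V=\mathcal{B}(t,\cdot)^{(q-p)/q}$, $r'=p/(p-q)$; its constants $D_1,D_2$ become Terms~4 and~5. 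The same chain, preceded by an integration-by-parts reorganization of $(\int_0^s u^q h)^{q/(p-q)}u^q h$ in the spirit of Theorem~\ref{thm.IBP.0}, handles $F_3$, and its IBP-boundary piece glues with $F_2$ via the elementary identity $t^{-m'/p}\int_0^t v_2+\int_t^{\infty}s^{-m'/p}v_2\approx\int_0^{\infty}(s+t)^{-m'/p}v_2(s)\,ds$ (a degenerate case of Lemma~\ref{gluing.lem.0}), yielding the prefactor $(\int_0^{\infty}(t/(s+t))^{m'/p}v_2(s)\,ds)^{1/m'}$ seen in Terms~6 and~7; in case~(i) the same gluing disposes of $F_1+F_2$ and produces Term~3 of that case. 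The anticipated main obstacle is the treatment of $F_3$ in case~(ii): the double dependence on $h$ forces the delicate duality-Fubini maneuver above, and tracking the $t$-dependence through Theorems~\ref{thm44b}/\ref{krepelathm6b} and the final gluing with $F_2$ is where the bookkeeping is most error-prone.
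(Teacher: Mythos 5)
Your proposal follows essentially the same route as the paper's: reduce via Lemma~\ref{redlemma} and Theorem~\ref{assosGG}(iv), Fubini the inner kernel to split the $p'$-average into a Hardy branch and a Copson branch, apply Theorem~\ref{gks} and Theorem~\ref{krepick} respectively (with the parameter dictionary you wrote, which is correct, including the translation of the side conditions on $v_2$), then handle the $h$-supremum via Theorem~\ref{transfermon}, Remark~\ref{rem}, duality/Fubini, and Theorems~\ref{thm44b}/\ref{krepelathm6b} for the supremum-operator conditions. The case~(i) analysis and the treatment of $G_1$, $G_2$ (the paper's $A_1$, $A_2$) are as you describe.

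The one place where your description is materially off is the handling of $F_3$ in case~(ii). The paper does \emph{not} integrate by parts on $F_3$ first. Instead it rewrites the $F_2$-factor using the elementary identity
\[
\bigg(\int_0^t h u^q\bigg)^{\frac{p}{p-q}} \approx \int_0^t \bigg(\int_0^s h u^q\bigg)^{\frac{q}{p-q}} h(s)u(s)^q\,ds,
\]
so that $F_2$ and $F_3$ acquire matching inner integrands, and then applies the \emph{full} Lemma~\ref{gluing.lem.0} (with $a(s)=s^{1/p}$), gluing the $v_2$-prefactors \emph{and} the $h$-dependent factors simultaneously; only then is Theorem~\ref{thm.IBP.0} applied to the glued integral $\int_0^\infty (s+t)^{-q/(p-q)}\big(\int_0^s hu^q\big)^{q/(p-q)}hu^q\,ds$. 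This ordering matters: in your scheme, the purported IBP ``boundary piece'' $t^{q/(q-p)}H(t)^{p/(p-q)}$ (with $H(s)=\int_0^s hu^q$) is in fact always dominated by the surviving IBP integral $\int_t^\infty s^{-p/(p-q)}H(s)^{p/(p-q)}\,ds$ because $H$ is non-decreasing and $\int_t^\infty s^{-p/(p-q)}\,ds\approx t^{q/(q-p)}$; there is no independent boundary term to glue with $F_2$, and the degenerate prefactor identity is insufficient here. Rewriting $F_2$ \emph{before} gluing, and gluing with the non-degenerate Lemma~\ref{gluing.lem.0}, is what produces the $\int_0^\infty(\cdot/(s+t))^{\cdot}$ form on which Theorem~\ref{thm.IBP.0} can be invoked and which ultimately yields Terms~6 and~7.
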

\begin{proof}
	By  Lemma \ref{redlemma}, Theorem \ref{assosGG}, (iv), and Fubini's Theorem, we have that
	\begin{align*}
		K \approx & \sup_{h:\, \int_0^x h \le \int_0^x w} \bigg( \int_0^{\infty} \bigg( \int_t^{\infty} \bigg( \frac{1}{s} \int_0^s  b(y) \int_y^{\infty} \vp(x) \frac{u(x)}{B(x)} \,dx \,dy \bigg)^{p'}\,ds \bigg)^{\frac{m'}{p'}} v_2 (t) \,dt \bigg)^{\frac{1}{m'}}  \\
		\approx & \sup_{h:\, \int_0^x h \le \int_0^x w}  \sup_{\vp \in \mathfrak{M}^{+}} \frac{1}{\|\vp\|_{q',h^{1-q'},(0,\infty)}} \bigg( \int_0^{\infty} \bigg( \int_t^{\infty} \bigg( 	\frac{B(s)}{s}  \int_s^{\infty} \vp(x) \frac{u(x)}{B(x)} \,dx  \bigg)^{p'}\,ds \bigg)^{\frac{m'}{p'}} v_2(t)\,dt \bigg)^{\frac{1}{m'}}
		\\
		& + \sup_{h:\, \int_0^x h \le \int_0^x w} \sup_{\vp \in \mathfrak{M}^{+}} \frac{1}{\|\vp\|_{q',h^{1-q'},(0,\infty)}} \bigg( \int_0^{\infty} \bigg( \int_t^{\infty} \bigg( \frac{1}{s}  	\int_0^s \vp(x) u(x) \,dx  \bigg)^{p'}\,ds \bigg)^{\frac{m'}{p'}} v_2(t) \,dt \bigg)^{\frac{1}{m'}}
		=: \, A + B.  
	\end{align*} 
	
	Let $p \le q$ and $m \le q$. We first estimate $A$. By Theorem \ref{krepick}, we get that
	\begin{equation*}
		A \approx \sup_{h:\, \int_0^x h \le \int_0^x w} \sup_{t \in (0,\infty)}\bigg( \int_0^t \bigg( \int_s^t \bigg( \frac{B(y)}{y} \bigg)^{p'}\,dy \bigg)^{\frac{m'}{p'}}
		v_2(s)\,ds \bigg)^{\frac{1}{m'}}\bigg( \int_t^{\infty} h(x) \bigg( \frac{u(x)}{B(x)} \bigg)^q \,dx \bigg)^{\frac{1}{q}} .
	\end{equation*}
	
	By interchanging the suprema, applying Theorem \ref{transfermon} and Remark \ref{rem}, we obtain that
	\begin{align*}
		A \approx & \sup_{t \in (0,\infty)}\bigg( \int_0^t \bigg( \int_s^t \bigg( \frac{B(y)}{y} \bigg)^{p'}\,dy \bigg)^{\frac{m'}{p'}}
		v_2(s)\,ds \bigg)^{\frac{1}{m'}} \sup_{h:\, \int_0^x h \le \int_0^x w} \bigg( \int_0^{\infty} h(x) \bigg( \frac{u(x)}{B(x)} \bigg)^q \chi_{[t,\infty)}(x) \,dx \bigg)^{\frac{1}{q}}
		\\
		\approx & \sup_{t \in (0,\infty)}\bigg( \int_0^t \bigg( \int_s^t \bigg( \frac{B(y)}{y} \bigg)^{p'}\,dy \bigg)^{\frac{m'}{p'}}
		v_2(s)\,ds \bigg)^{\frac{1}{m'}} \bigg(\int_0^{\infty} \bigg( \sup_{\tau \in [x,\infty)} \bigg( \frac{u(\tau)}{B(\tau)} \bigg)^q \chi_{[t,\infty)}(\tau) \bigg) w(x) \,dx \bigg)^{\frac{1}{q}} \\
		\approx  & \sup_{t \in (0,\infty)}\bigg( \int_0^t \bigg( \int_s^t \bigg( \frac{B(y)}{y} \bigg)^{p'} \, dy \bigg)^{\frac{m'}{p'}} v_2(s) \, ds \bigg)^{\frac{1}{m'}} 
		\bigg( \sup_{\tau \in [t,\infty)} \frac{u(\tau)}{B(\tau)} \bigg) \bigg( \int_0^{t} w(x) \, dx 	\bigg)^{\frac{1}{q}}
		\\
		& + \sup_{t \in (0,\infty)}\bigg( \int_0^t \bigg( \int_s^t \bigg( \frac{B(y)}{y} \bigg)^{p'} \, dy \bigg)^{\frac{m'}{p'}} v_2(s) \, ds \bigg)^{\frac{1}{m'}} 
		\bigg(\int_t^{\infty} \bigg( \sup_{\tau \in [x,\infty)} \bigg( \frac{u(\tau)}{B(\tau)} \bigg)^q \bigg) w(x) \,dx \bigg)^{\frac{1}{q}}.
	\end{align*}
	
	Now we estimate $B$. Note that, by Theorem \ref{gks}, the following equivalency holds:
	\begin{align*}
        B \approx & \sup_{h:\, \int_0^x h \le \int_0^x w}  \sup_{t \in (0,\infty)}  \bigg( \int_t^{\infty} s^{-\frac{m'}{p}} v_2(s) \, ds \bigg)^{\frac{1}{m'}}  \bigg( \int_0^t h\, u^q \bigg)^{\frac{1}{q}}
        \\
        & + \sup_{h:\, \int_0^x h \le \int_0^x w} \sup_{t \in (0,\infty)} t^{-\frac{1}{p}}\bigg(\int_0^t v_2 (s)\,ds \bigg)^\frac{1}{m'} \bigg( \int_0^t h\, u^q \bigg)^{\frac{1}{q}} \\
        \approx &  \sup_{h:\, \int_0^x h \le \int_0^x w} \sup_{t \in (0,\infty)}  \bigg( \int_0^{\infty} \bigg( \frac{1}{s +t} \bigg)^{\frac{m'}{p}}v_2(s) \, ds \bigg)^{\frac{1}{m'}}  \bigg( \int_0^t h u^q \bigg)^{\frac{1}{q}}.
	\end{align*}
	
	By interchanging suprema, applying Theorem \ref{transfermon} and Remark \ref{rem}, we have that
	\begin{align*}
		B \approx &  \sup_{t \in (0,\infty)}  \bigg( \int_0^{\infty} \bigg( \frac{1}{s + t}\bigg)^{\frac{m'}{p}}v_2(s) \,ds \bigg)^{\frac{1}{m'}} \left( \sup_{h:\, \int_0^x h \le \int_0^x w} \int_0^{\infty} h(x) u(x)^q \chi_{(0,t]}(x) \,dx \right)^{\frac{1}{q}}.
		\\
		\approx & \sup_{t \in (0,\infty)}  \bigg( \int_0^{\infty} \bigg( \frac{1}{s + t}\bigg)^{\frac{m'}{p}} v_2(s) \, ds \bigg)^{\frac{1}{m'}} \bigg(\int_0^{\infty} \bigg( \sup_{\tau \in [x,\infty)} u(\tau)^q \chi_{(0,t]}(\tau) \bigg) w(x) \,dx \bigg)^{\frac{1}{q}} \\
	    \approx & \sup_{t \in (0,\infty)}  \bigg( \int_0^{\infty} \bigg( \frac{1}{s + t}\bigg)^{\frac{m'}{p}} v_2(s) \, ds \bigg)^{\frac{1}{m'}} 
		\bigg(\int_0^{t} \bigg( \sup_{\tau \in [x,t]} u(\tau)^q \bigg) w(x) \,dx \bigg)^{\frac{1}{q}}.
	\end{align*}
	
	Combining the estimates for $A$ and $B$, we obtain that
	\begin{align*}
		K \approx & \sup_{t \in (0,\infty)}\bigg( \int_0^t \bigg( \int_s^t \bigg( \frac{B(y)}{y} \bigg)^{p'} \, dy \bigg)^{\frac{m'}{p'}} v_2(s) \, ds \bigg)^{\frac{1}{m'}} 
		\bigg( \sup_{t \le \tau} \frac{u(\tau)}{B(\tau)} \bigg) \bigg( \int_0^{t} w(x) \, dx 	\bigg)^{\frac{1}{q}}
		\\
		& + \sup_{t \in (0,\infty)}\bigg( \int_0^t \bigg( \int_s^t \bigg( \frac{B(y)}{y} \bigg)^{p'} \, dy \bigg)^{\frac{m'}{p'}} v_2(s) \, ds \bigg)^{\frac{1}{m'}} 
		\bigg(\int_t^{\infty} \bigg( \sup_{\tau \in [x,\infty)} \bigg( \frac{u(\tau)}{B(\tau)} \bigg)^q \bigg) w(x) \,dx \bigg)^{\frac{1}{q}}
		\\
		& + \sup_{t \in (0,\infty)}  \bigg( \int_0^{\infty} \bigg( \frac{1}{s + t}\bigg)^{\frac{m'}{p}} v_2(s) \, ds \bigg)^{\frac{1}{m'}} 
		\bigg(\int_0^{t} \bigg( \sup_{\tau \in [x,t]} u(\tau)^q \bigg) w(x) \,dx \bigg)^{\frac{1}{q}}.
	\end{align*}
	
	Let $m \le q < p$. By Theorem \ref{krepick}, we get that
	\begin{align*}
		A \approx &  \sup_{h:\, \int_0^x h \le \int_0^x w}  \sup_{t \in (0,\infty)}\bigg( \int_0^t v_2(s)	\bigg( \int_s^t \bigg( \frac{B(y)}{y} \bigg)^{p'}\,dy \bigg)^{\frac{m'}{p'}}
		\,ds \bigg)^{\frac{1}{m'}}\bigg( \int_t^{\infty} h(x) \bigg( \frac{u(x)}{B(x)} \bigg)^q \,dx \bigg)^{\frac{1}{q}}
		\\
		& + \sup_{h:\, \int_0^x h \le \int_0^x w} \sup_{t \in (0,\infty)}\bigg( \int_0^t 
		v_2 (s) \, ds \bigg)^{\frac{1}{m'}}\bigg( \int_t^{\infty} {\mathcal B}(t,s)\bigg(\int_{s}^{\infty}h(x) \bigg( \frac{u(x)}{B(x)} \bigg)^q \,dx\bigg)^\frac{p}{p-q} \, ds \bigg)^{\frac{p-q}{pq}}
		\\
		&=: A_1 + A_2.
	\end{align*}
	
	Since in the previous case $ A_1$ was evaluated as
	\begin{align*}
		A_1 \approx  & \sup_{t \in (0,\infty)}\bigg( \int_0^t \bigg( \int_s^t \bigg( \frac{B(y)}{y} \bigg)^{p'} \, dy \bigg)^{\frac{m'}{p'}} v_2(s) \, ds \bigg)^{\frac{1}{m'}} 
		\bigg( \int_0^{t} w(x) \, dx 	\bigg)^{\frac{1}{q}} \bigg( \sup_{\tau \in [t,\infty)} \frac{u(\tau)}{B(\tau)} \bigg) 
		\\
		& + \sup_{t \in (0,\infty)}\bigg( \int_0^t \bigg( \int_s^t \bigg( \frac{B(y)}{y} \bigg)^{p'} \, dy \bigg)^{\frac{m'}{p'}} v_2 (s) \, ds \bigg)^{\frac{1}{m'}} 
		\bigg(\int_t^{\infty} \bigg( \sup_{\tau \in [x,\infty)} \bigg( \frac{u(\tau)}{B(\tau)} \bigg)^q \bigg) w(x) \,dx \bigg)^{\frac{1}{q}},
	\end{align*}
	we continue with the estimate of $A_2$. 
	
	By duality and Fubini's theorem, we have that
	\begin{align*}
		A_2 = & \sup_{h:\, \int_0^x h \le \int_0^x w} \sup_{t \in (0,\infty)}\bigg( \int_0^t v_2 (s) \, ds \bigg)^{\frac{1}{m'}}\left\{\sup_{\psi \in {\mathfrak M}^+ [t,\infty)}\frac{\int_t^{\infty} \psi(s) \int_{s}^{\infty}h(x) \bigg( \frac{u(x)}{B(x)} \bigg)^q \,dx \, ds}{\bigg(\int_t^{\infty} \psi(s)^{\frac{p}{q}}{\mathcal B}(t,s)^{\frac{q-p}{q}}\, ds\bigg)^{\frac{q}{p}}} \right\}^{\frac{1}{q}} \\
		= & \sup_{h:\, \int_0^x h \le \int_0^x w}\sup_{t \in (0,\infty)}\bigg( \int_0^t 
		v_2 (s) \, ds \bigg)^{\frac{1}{m'}}\left\{\sup_{\psi \in {\mathfrak M}^+ [t,\infty)}\frac{\int_t^{\infty} h(x) \bigg( \frac{u(x)}{B(x)} \bigg)^q \int_{t}^{x}\psi(s) \, ds \, dx} {\bigg(\int_t^{\infty} \psi(s)^{\frac{p}{q}}{\mathcal B}(t,s)^{\frac{q-p}{q}}\, ds \bigg)^{\frac{q}{p}}} \right\}^{\frac{1}{q}}.
	\end{align*}
	
	By interchanging suprema, duality and Theorem \ref{transfermon}, in view of Remark \ref{rem}, we arrive at
	\begin{align*}
		A_2 = &  \sup_{t \in (0,\infty)}\bigg( \int_0^t v_2 (s) \, ds \bigg)^{\frac{1}{m'}}\left\{\sup_{\psi \in {\mathfrak M}^+ [t,\infty)}\frac{\sup_{h:\, \int_0^x h \le \int_0^x w}\int_0^{\infty} h(x) \bigg( \frac{u(x)}{B(x)}  \bigg)^q \chi_{[t,\infty)}(x) \int_{t}^{x}\psi(s) \, ds\,dx}{\bigg(\int_t^{\infty} \psi(s)^{\frac{p}{q}}{\mathcal B}(t,s)^{\frac{q-p}{q}}\, ds\bigg)^{\frac{q}{p}}}
		\right\}^{\frac{1}{q}}
		\\
		= & \sup_{t \in (0,\infty)}\bigg( \int_0^t v_2 (s) \, ds \bigg)^{\frac{1}{m'}} \left\{\sup_{\psi \in {\mathfrak M}^+ [t,\infty)}\frac{\int_0^{\infty} \bigg( \sup_{\tau \in [x,\infty)} \bigg( \frac{u(\tau)}{B(\tau)}  \bigg)^q \chi_{[t,\infty)}(\tau) \bigg(\int_{t}^{\tau}\psi(s)\, ds\bigg) \bigg) w(x)\,dx}{\bigg(\int_t^{\infty} \psi(s)^{\frac{p}{q}}{\mathcal B}(t,s)^{\frac{q-p}{q}}\, ds \bigg)^{\frac{q}{p}}}
		\right\}^{\frac{1}{q}}
		\\
		\approx & \sup_{t \in (0,\infty)}\bigg( \int_0^t v_2 (s) \, ds \bigg)^{\frac{1}{m'}} \bigg(\int_0^{t} w(x) \, dx\bigg)^{\frac{1}{q}} \left\{ \sup_{\tau \in [t,\infty)} \bigg(\frac{u(\tau)}{B(\tau)}\bigg)^q \sup_{\psi \in {\mathfrak M}^+ [t,\infty)}\frac{ \int_{t}^{\tau} \psi (s) \,ds}{\bigg(\int_t^{\infty} \psi(s)^{\frac{p}{q}}{\mathcal B}(t,s)^{\frac{q-p}{q}}\, ds \bigg)^{\frac{q}{p}}}
		\right\}^{\frac{1}{q}} 
		\\
		& + \sup_{t \in (0,\infty)}\bigg( \int_0^t v_2 (s) \, ds \bigg)^{\frac{1}{m'}} \left( \sup_{\psi \in {\mathfrak M}^+ [t,\infty)} \frac{ \int_t^{\infty} \bigg( \sup_{\tau \in [x,\infty)} \bigg(\frac{u(\tau)}{B(\tau)}\bigg)^q \bigg(\int_t^{\tau} \psi(s)\,ds \bigg) \bigg) w(x) \,dx }{\bigg( \int_t^{\infty} \psi (s)^{\frac{p}{q}} {\mathcal B}(t,s)^{\frac{q-p}{q}}\,ds \bigg)^{\frac{q}{p}}} \right)^{\frac{1}{q}} \\
		\approx & \sup_{t \in (0,\infty)}\bigg( \int_0^t v_2 (s) \, ds \bigg)^{\frac{1}{m'}} \bigg(\int_0^{t} w(x) \, dx\bigg)^{\frac{1}{q}} \sup_{\tau \in [t,\infty)} \bigg(\frac{u(\tau)}{B(\tau)}\bigg)  \bigg( \int_{t}^{\infty} {\mathcal B}(t,s) \chi_{[t,\tau]}(s) \,ds \bigg)^{\frac{p-q}{pq}}
		\\
		& + \sup_{t \in (0,\infty)}\bigg( \int_0^t v_2 (s) \, ds \bigg)^{\frac{1}{m'}} \left( \sup_{\vp \in {\mathfrak M}^+ [t,\infty)} \frac{ \int_t^{\infty} \bigg( \sup_{\tau \in [x,\infty)} \bigg(\frac{u(\tau)}{B(\tau)}\bigg)^q \bigg(\int_t^{\tau} \vp(s)\,ds \bigg) \bigg) w(x) \,dx }{\bigg( \int_t^{\infty} \vp(s)^{\frac{p}{q}} {\mathcal B}(t,s)^{\frac{q-p}{q}}\,ds \bigg)^{\frac{q}{p}}} \right)^{\frac{1}{q}}.
	\end{align*}
	
	Applying Theorem \ref{thm44b} yields
	\begin{align*}
		A_2 \approx & \sup_{t \in (0,\infty)}\bigg( \int_0^t v_2 (s) \, ds \bigg)^{\frac{1}{m'}} \bigg(\int_0^{t} w(x) \, dx\bigg)^{\frac{1}{q}} \bigg(\sup_{\tau \in [t,\infty)} \frac{u(\tau)}{B(\tau)}  \bigg( \int_{t}^{\tau} {\mathcal B}(t,s) \,ds \bigg)^{\frac{p-q}{pq}} \bigg)
		\\
		& + \sup_{t \in (0,\infty)}\bigg( \int_0^t v_2 (s) \, ds \bigg)^{\frac{1}{m'}}  \bigg( \int_t^{\infty} \bigg[\sup_{\tau \in [s,\infty)}\bigg[\sup_{x \in [\tau,\infty)} \bigg(\frac{u(x)}{B(x)}\bigg)^\frac{pq}{p-q}\bigg]\bigg(\int_{t}^{\tau} \mathcal{B}(t,y) \,dy\bigg)\bigg]\bigg(\int_{t}^{s}w(y)\,dy \bigg)^{\frac{q}{p-q}} w(s) \,ds\bigg)^{\frac{p-q}{pq}}
		\\
		& + \sup_{t \in (0,\infty)}\bigg( \int_0^t v_2 (s) \, ds \bigg)^{\frac{1}{m'}}  \bigg( \int_t^{\infty}\bigg(\int_{s}^{\infty} \bigg[\sup_{\tau \in [x,\infty)}\bigg(\frac{u(\tau)}{B(\tau)}\bigg)^q\bigg]w(x) \, dx\bigg)^{\frac{q}{p-q}} \bigg[\sup_{\tau \in [s,\infty)}\bigg(\frac{u(\tau)}{B(\tau)}\bigg)^q\bigg]\bigg(\int_{t}^{s}\mathcal{B}(t,y) \,dy \bigg) w(s) \,ds\bigg)^{\frac{p-q}{pq}}.
	\end{align*}
	
	Combining the estimates for $A_1$ and $A_2$, we arrive at
	\begin{align*}
		A \approx & \sup_{t \in (0,\infty)}\bigg( \int_0^t \bigg( \int_s^t \bigg( \frac{B(y)}{y} \bigg)^{p'} \, dy \bigg)^{\frac{m'}{p'}} v_2(s) \, ds \bigg)^{\frac{1}{m'}} 
		\bigg( \sup_{\tau \in [t,\infty)} \frac{u(\tau)}{B(\tau)} \bigg) \bigg( \int_0^{t} w(x) \, dx 	\bigg)^{\frac{1}{q}}
		\\
		& + \sup_{t \in (0,\infty)}\bigg( \int_0^t \bigg( \int_s^t \bigg( \frac{B(y)}{y} \bigg)^{p'} \, dy \bigg)^{\frac{m'}{p'}} v_2(s) \, ds \bigg)^{\frac{1}{m'}} 
		\bigg(\int_t^{\infty} \bigg( \sup_{\tau \in [x,\infty)} \bigg( \frac{u(\tau)}{B(\tau)} \bigg)^q \bigg) w(x) \,dx \bigg)^{\frac{1}{q}}
		\\
		& + \sup_{t \in (0,\infty)}\bigg( \int_0^t v_2 (s) \, ds \bigg)^{\frac{1}{m'}} \bigg(\int_0^{t} w(x) \, dx\bigg)^{\frac{1}{q}} \bigg( \sup_{\tau \in [t,\infty)} \frac{u(\tau)}{B(\tau)} \bigg( \int_{t}^{\tau} {\mathcal B}(t,s) \,ds \bigg)^{\frac{p-q}{pq}} \bigg)
		\\
		& + \sup_{t \in (0,\infty)}\bigg( \int_0^t v_2 (s) \, ds \bigg)^{\frac{1}{m'}}  \bigg( \int_t^{\infty} \bigg[\sup_{\tau \in [s,\infty)}\bigg[\sup_{x \in [\tau,\infty)} \bigg(\frac{u(x)}{B(x)}\bigg)^\frac{pq}{p-q}\bigg]\bigg(\int_{t}^{\tau} \mathcal{B}(t,y) \,dy\bigg)\bigg]\bigg(\int_{t}^{s}w(y)\,dy \bigg)^{\frac{q}{p-q}} w(s) \,ds\bigg)^{\frac{p-q}{pq}}
		\\
		& + \sup_{t \in (0,\infty)}\bigg( \int_0^t v_2 (s) \, ds \bigg)^{\frac{1}{m'}}  \bigg( \int_t^{\infty}\bigg(\int_{s}^{\infty} \bigg[\sup_{\tau \in [x,\infty)}\bigg(\frac{u(\tau)}{B(\tau)}\bigg)^q\bigg]w(x) \, dx\bigg)^{\frac{q}{p-q}} \bigg[\sup_{\tau \in [s,\infty)}\bigg(\frac{u(\tau)}{B(\tau)}\bigg)^q\bigg]\bigg(\int_{t}^{s}\mathcal{B}(t,y) \,dy \bigg) w(s) \,ds\bigg)^{\frac{p-q}{pq}}.
	\end{align*}
	
	Now we estimate $B$. By Theorem \ref {gks} we have
	\begin{align*}
		B \approx & \sup_{h:\, \int_0^x h \le \int_0^x w}  \sup_{t \in (0,\infty)}  \bigg( \int_t^{\infty} s^{-\frac{m'}{p}} v_2(s) \, ds \bigg)^{\frac{1}{m'}}  \bigg( \int_0^t h\, u^q \bigg)^{\frac{1}{q}}
		\\
		& + \sup_{h:\, \int_0^x h \le \int_0^x w} \sup_{t \in (0,\infty)} \bigg(\int_0^t v_2 (s) \, ds \bigg)^\frac{1}{m'} \bigg(\int_t^\infty s^\frac{q}{q-p} \bigg( \int_0^s h\, u^q\bigg)^\frac{q}{p-q} h(s) u(s)^q \, ds\bigg)^\frac{p-q}{pq}.
	\end{align*}
	
	Since
	$$ 
	\bigg( \int_{0}^{t} hu^q \bigg)^\frac{p}{p-q} = \int_{0}^{t} d\,\bigg( \int_{0}^{s} hu^q \bigg)^\frac{p}{p-q} \approx \int_{0}^{t} \bigg( \int_{0}^{s} hu^q \bigg)^\frac{q}{p-q} h(s)u(s)^q \, ds,
	$$
	then we get that
	\begin{align*}
		B \approx &  \sup_{h:\, \int_0^x h \le \int_0^x w} \sup_{t \in (0,\infty)}  \bigg( \int_t^{\infty} s^{-\frac{m'}{p}} v_2(s) \, ds \bigg)^{\frac{1}{m'}}  \bigg( \int_{0}^{t} \bigg( \int_{0}^{s} hu^q \bigg)^\frac{q}{p-q} h(s)u(s)^q \, ds \bigg)^{\frac{p-q}{pq}}
		\\
		& + \sup_{h:\, \int_0^x h \le \int_0^x w} \sup_{t \in (0,\infty)} \bigg(\int_0^t v_2 (s) \, ds \bigg)^\frac{1}{m'} \bigg(\int_t^\infty s^\frac{q}{q-p} \bigg( \int_0^s h\, u^q\bigg)^\frac{q}{p-q} h(s) u(s)^q \, ds\bigg)^\frac{p-q}{pq}.
	\end{align*}
	
	By Lemma \ref{gluing.lem.0}, we arrive at
	\begin{align*}
		B &  \approx \sup_{h:\, \int_0^x h \le \int_0^x w} \sup_{t \in (0,\infty)} \bigg(\int_{0}^{\infty} \bigg(\frac{t}{s + t}\bigg)^{\frac{m'}{p}} v_2(s) \, ds\bigg)^\frac{1}{m'} \bigg(\int_{0}^{\infty}\bigg(\frac{1}{s + t}\bigg)^\frac{q}{p-q} \bigg( \int_{0}^{s} hu^q \bigg)^\frac{q}{p-q} h(s)u(s)^q \, ds\bigg)^\frac{p-q}{pq}.
	\end{align*}
	
	Applying Theorem \ref {thm.IBP.0} to the last integral, we obtain that 
	\begin{align*}
		\int_{0}^{\infty}\bigg(\frac{1}{s + t}\bigg)^\frac{q}{p-q} \bigg( \int_{0}^{s} hu^q \bigg)^\frac{q}{p-q} h(s)u(s)^q \, ds & \\
		& \hspace{-3cm} \approx \int_{0}^{\infty} \bigg( \int_{0}^{s} hu^q \bigg)^\frac{p}{p-q}  d\,\bigg(-\bigg(\frac{1}{s + t}\bigg)^\frac{q}{p-q}\bigg) \\
		& \hspace{-3cm} \approx -\int_{0}^{\infty} \bigg( \int_{0}^{s} hu^q \bigg)^\frac{p}{p-q} \,\bigg(\frac{1}{s + t}\bigg)^\frac{2q-p}{p-q}\bigg(-\frac{1}{(s + t)^2}\bigg)\, ds \\
		& \hspace{-3cm} = \int_{0}^{\infty} \bigg( \int_{0}^{s} hu^q \bigg)^\frac{p}{p-q} \,\bigg(\frac{1}{s + t}\bigg)^\frac{p}{p-q}\, ds.
	\end{align*}
	
	By duality, we obtain that
	\begin{align*}
		B \approx &  \sup_{h:\, \int_0^x h \le \int_0^x w} \sup_{t \in (0,\infty)} \bigg(\int_{0}^{\infty} \bigg(\frac{t}{s + t}\bigg)^{\frac{m'}{p}} v_2(s) \, ds\bigg)^\frac{1}{m'} \bigg(\int_{0}^{\infty} \bigg( \int_{0}^{s} hu^q \bigg)^\frac{p}{p-q} \,\bigg(\frac{1}{s + t}\bigg)^\frac{p}{p-q}\, ds\bigg)^\frac{p-q}{pq}
		\\
		\approx &  \sup_{h:\, \int_0^x h \le \int_0^x w}   \sup_{t \in (0,\infty)} \bigg(\int_{0}^{\infty} \bigg(\frac{t}{s + t}\bigg)^{\frac{m'}{p}} v_2(s) \, ds\bigg)^\frac{1}{m'} \left( \sup_{\psi \in \mathfrak{M}^{+}} \frac{\int_0^{\infty}   \psi (s) \bigg( \int_0^s h u^q \bigg) \,  ds }{\bigg( \int_0^{\infty} \psi(s)^{\frac{p}{q}}  \, (s+t)^{\frac{p}{q}} \,  ds \bigg)^{\frac{q}{p}}} \right)^{\frac{1}{q}}. 
	\end{align*}
	
	Interchanging suprema, Fubini theorem and Theorem \ref{transfermon} implies
	\begin{align*}
		B \approx  &\sup_{t \in (0,\infty)} \bigg(\int_{0}^{\infty} \bigg(\frac{t}{s + t}\bigg)^{\frac{m'}{p}} v_2(s) \, ds\bigg)^\frac{1}{m'} \left( \sup_{\psi \in \mathfrak{M}^{+}} \frac{ \sup_{h:\, \int_0^x h \le \int_0^x w} \int_0^{\infty} h(\tau) u(\tau)^q \bigg( \int_{\tau}^{\infty} \psi(s) \,ds \bigg) \,d\tau}{\bigg( \int_0^{\infty} \psi(s)^{\frac{p}{q}} (s+t)^{\frac{p}{q}}\,ds \bigg)^{\frac{q}{p}}} \right)^{\frac{1}{q}}
		\\
		\approx  &\sup_{t \in (0,\infty)} \bigg(\int_{0}^{\infty} \bigg(\frac{t}{s + t}\bigg)^{\frac{m'}{p}} v_2(s) \, ds\bigg)^\frac{1}{m'} \left( \sup_{\psi \in \mathfrak{M}^{+}} \frac{ \int_0^{\infty} \bigg( \sup_{\tau \in [x,\infty)} u(\tau)^q \bigg( \int_{\tau}^{\infty} \psi(s) \,ds \bigg) \bigg) w(x)\,dx } {\bigg(\int_0^{\infty} \psi(s)^{\frac{p}{q}}(s+t)^{\frac{p}{q}} \,ds \bigg)^{\frac{q}{p}}}\right)^{\frac{1}{q}}.
	\end{align*}
	
	Applying Theorem \ref{krepelathm6b} yields
	\begin{align*}
		B \approx & \sup_{t \in (0,\infty)} \bigg(\int_{0}^{\infty} \bigg(\frac{t}{s + t}\bigg)^{\frac{m'}{p}} v_2(s) \, ds\bigg)^\frac{1}{m'}\bigg(\int_{0}^{\infty}\bigg[\sup_{\tau \in [t,\infty)}u(\tau)^\frac{pq}{p-q}(\tau+t)^\frac{q}{q-p}\bigg]\bigg(\int_{0}^{t}w(x)\,dx\bigg)^\frac{q}{p-q} w(t)\, dt\bigg)^\frac{p-q}{pq}
		\\
		& + \sup_{t \in (0,\infty)} \bigg(\int_{0}^{\infty} \bigg(\frac{t}{s + t}\bigg)^{\frac{m'}{p}} v_2(s) \, ds\bigg)^\frac{1}{m'}\bigg(\int_{0}^{\infty}\bigg(\int_{0}^{t}\bigg[\sup_{y \in [x, t]}u(y)^q\bigg]\,w(x)\,dx\bigg)^\frac{q}{p-q} \bigg[\sup_{\tau \in [t,\infty)}u(\tau)^q(\tau+t)^\frac{q}{q-p}\bigg] w(t) \, dt\bigg)^\frac{p-q}{pq}.
	\end{align*}
	
	Finally, if we combine the estimates of $A$ and $B$, then we get the result as follows:
	\begin{align*}
		K \approx & \sup_{t \in (0,\infty)}\bigg( \int_0^t \bigg( \int_s^t \bigg( \frac{B(y)}{y} \bigg)^{p'} \, dy \bigg)^{\frac{m'}{p'}} v_2(s) \, ds \bigg)^{\frac{1}{m'}} 
		\bigg( \int_0^{t} w(x) \, dx 	\bigg)^{\frac{1}{q}} \bigg( \sup_{\tau \in [t,\infty)} \frac{u(\tau)}{B(\tau)} \bigg) 
		\\
		& + \sup_{t \in (0,\infty)}\bigg( \int_0^t \bigg( \int_s^t \bigg( \frac{B(y)}{y} \bigg)^{p'} \, dy \bigg)^{\frac{m'}{p'}} v_2(s) \, ds \bigg)^{\frac{1}{m'}} 
		\bigg(\int_t^{\infty} \bigg( \sup_{\tau \in [x,\infty)} \bigg( \frac{u(\tau)}{B(\tau)} \bigg)^q \bigg) w(x) \,dx \bigg)^{\frac{1}{q}}
		\\
		& + \sup_{t \in (0,\infty)}\bigg( \int_0^t v_2 (s) \, ds \bigg)^{\frac{1}{m'}} \bigg(\int_0^{t} w(x) \, dx\bigg)^{\frac{1}{q}} \bigg( \sup_{\tau \in [t,\infty)} \frac{u(\tau)}{B(\tau)}\bigg( \int_{t}^{\tau} {\mathcal B}(t,s) \,ds \bigg)^{\frac{p-q}{pq}}
		\bigg) \\
		& + \sup_{t \in (0,\infty)}\bigg( \int_0^t v_2 (s) \, ds \bigg)^{\frac{1}{m'}}  \bigg( \int_t^{\infty} \bigg[\sup_{\tau \in [s,\infty)}\bigg[\sup_{x \in [\tau,\infty)} \bigg(\frac{u(x)}{B(x)}\bigg)^\frac{pq}{p-q}\bigg]\bigg(\int_{t}^{\tau} \mathcal{B}(t,y) \,dy\bigg)\bigg]\bigg(\int_{t}^{s}w(y)\,dy \bigg)^{\frac{q}{p-q}} w(s) \,ds\bigg)^{\frac{p-q}{pq}}
		\\
		& + \sup_{t \in (0,\infty)}\bigg( \int_0^t v_2 (s) \, ds \bigg)^{\frac{1}{m'}}  \bigg( \int_t^{\infty}\bigg(\int_{s}^{\infty} \bigg[\sup_{\tau \in [x,\infty)}\bigg(\frac{u(\tau)}{B(\tau)}\bigg)^q\bigg]w(x) \, dx\bigg)^{\frac{q}{p-q}} \bigg[\sup_{\tau \in [s,\infty)}\bigg(\frac{u(\tau)}{B(\tau)}\bigg)^q\bigg]\bigg(\int_{t}^{s}\mathcal{B}(t,y) \,dy \bigg) w(s) \,ds\bigg)^{\frac{p-q}{pq}}
		\\
		& + \sup_{t \in (0,\infty)} \bigg(\int_{0}^{\infty} \bigg(\frac{t}{s + t}\bigg)^{\frac{m'}{p}} v_2(s) \, ds\bigg)^\frac{1}{m'}\bigg(\int_{0}^{\infty}\bigg[\sup_{\tau \in [t,\infty)}u(\tau)^\frac{pq}{p-q}(\tau+t)^\frac{q}{q-p}\bigg]\bigg(\int_{0}^{t}w(x)\,dx\bigg)^\frac{q}{p-q} w(t)\, dt\bigg)^\frac{p-q}{pq}
		\\
		& + \sup_{t \in (0,\infty)} \bigg(\int_{0}^{\infty} \bigg(\frac{t}{s + t}\bigg)^{\frac{m'}{p}} v_2(s) \, ds\bigg)^\frac{1}{m'}\bigg(\int_{0}^{\infty}\bigg(\int_{0}^{t}\bigg[\sup_{y \in [x, t]}u(y)^q\bigg]\,w(x)\,dx\bigg)^\frac{q}{p-q} \bigg[\sup_{\tau \in [t,\infty)}u(\tau)^q(\tau+t)^\frac{q}{q-p}\bigg] w(t) \, dt\bigg)^\frac{p-q}{pq}.
	\end{align*}
\end{proof}

%%%%%%%%%%%%%%%%%%%%%%%%%%%%%%%%%%%%%%%%%%%%%%%%%%%%%%%%%%%%%%%%%%%%%%%%%%%%%%%%%%%%%%%%%%%%%%%%%%%%%%%%%%%%%%%%%%%%%%%%%%%%%%%%%%%%%%%%%%%%%%%%%%%%%%%%%%%%%%%%%%%%%%%%%%%%%%%%%%%%%%%%%%%%%%%%%%%%%%%%%%%%%%%%%%%%%%%%%%%%

\section{Boundedness of $M_{\phi,\Lambda^{\alpha}(b)}$ from $\GG(p,m,v)$ into $\Lambda^q (w)$}\label{BofMF}

In this section we formulate and prove the reduction theorem for the boundedness of $M_{\phi,\Lambda^{\alpha}(b)}$ from $\GG(p,m,v)$ into $\Lambda^q (w)$ and calculate the best constant in the inequality 
\begin{equation}\label{opnorm of M}
\bigg( \int_0^{\infty} \big[ \big(M_{\phi,\Lambda^{\alpha}(b)}f\big)^* (x)\big]^q  w(x)\,dx\bigg)^{\frac{1}{q}} \le C \bigg( \int_0^{\infty} \bigg( \int_0^x [f^* (\tau)]^{p}\,d\tau \bigg)^{\frac{m}{p}} v(x)\,dx \bigg)^{\frac{1}{m}},
\end{equation}
which is required to hold for all  $f \in \mp (\rn)$.

A function $\phi: (0,\infty) \rightarrow (0,\infty)$ is said to satisfy the $\Delta_2$ - condition, denoted $\phi \in \Delta_2$, if for some $C > 0$
$$
\phi (2t) \le C \, \phi (t) \quad \mbox{for all} \quad 0 < t < \infty.
$$

A function $\phi: (0,\infty) \rightarrow (0,\infty)$ is said to be
quasi-increasing, if for some $C > 0$
$$
\phi (t_1) \le C \phi (t_2),
$$
whenever $0 < t_1 \le t_2 < \infty$.

A function $\phi: (0,\infty) \rightarrow (0,\infty)$ is said to
satisfy the $Q_r$-condition, $0 < r < \infty$, denoted $\phi \in
Q_r(0,\infty)$, if for some constant $C > 0$
$$
\phi \bigg(\sum_{i=1}^n t_i \bigg) \le C \bigg( \sum_{i=1}^n
\phi(t_i)^r \bigg)^{1/r},
$$
for every finite set of non-negative real numbers
$\{t_1,\ldots,t_n\}$.

\begin{theorem}\label{main.reduc.thm}
	Let $0 < p,\, m,\, q < \infty$, $0 < \alpha \le r < \infty$ and $v,\,w \in \W (0,\infty)$. Assume that $\phi \in Q_{r}(0,\infty)$ is a quasi-increasing function. Suppose that $b \in \W (0,\infty)$ is such that $0 < B(t) < \infty$ for all $t > 0$, $B \in \Delta_2$, $B(\infty) = \infty$ and $B(t) / t^{\alpha / r}$ is quasi-increasing. Then inequality \eqref{opnorm of M} holds for all $f \in \mp (\rn)$ if and only if the inequality
	$$
	\bigg( \int_0^{\infty} \big[ T_{B/\phi^{\alpha},b} h^* (x) \big]^{\frac{q}{\alpha}} w(x)\,dx\bigg)^{\frac{1}{q}} \le C \bigg( \int_0^{\infty} \bigg( \int_0^x [h^* (\tau)]^{\frac{p}{\alpha}}\,d\tau \bigg)^{\frac{m}{p}} v(x)\,dx \bigg)^{\frac{1}{m}}
	$$
	holds for all $h \in \mp (\rn)$.
\end{theorem}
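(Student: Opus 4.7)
The plan is to reduce inequality \eqref{opnorm of M} to an inequality for $T_{B/\phi^\alpha,b}$ applied to the rearrangement $h^*$, via two independent ingredients: (i) a pointwise rearrangement estimate for the generalized maximal function $M_{\phi,\Lambda^\alpha(b)}f$, and (ii) the substitution $h = |f|^\alpha$ (equivalently $h^* = (f^*)^\alpha$) which rescales both integrals in the $\GG(p,m,v)$-norm and in $\|f\chi_Q\|_{\Lambda^\alpha(b)}$.

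The first step is to establish — or to quote from the authors' earlier work \cite{musbil} — the rearrangement equivalence
\begin{equation*}
(M_{\phi,\Lambda^\alpha(b)} f)^*(t) \ap \sup_{\tau \in [t,\infty)} \frac{1}{\phi(\tau)} \bigg(\int_0^\tau [f^*(s)]^\alpha b(s)\,ds \bigg)^{1/\alpha}, \qquad t \in (0,\infty).
\end{equation*}
The upper estimate is routine since $(f\chi_Q)^*(s) \le f^*(s)\chi_{[0,|Q|]}(s)$. The matching lower estimate is where the hypotheses $\phi \in Q_r(0,\infty)$ and quasi-increasing, together with $B \in \Delta_2$, $B(\infty) = \infty$, and $B(t)/t^{\alpha/r}$ quasi-increasing, come in; they are precisely the conditions needed to pass from a supremum over cubes containing $x$ to a monotone supremum in $\tau \geq t$ after a covering/level-set argument applied to the generalized average. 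I expect this to be the main technical obstacle, and it is the only place the structural conditions on $\phi$ and $B$ are used.

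The second step is purely algebraic. Setting $h := |f|^\alpha$, one has $h^* = (f^*)^\alpha$, hence
\begin{equation*}
\bigg(\int_0^\tau [f^*(s)]^\alpha b(s)\,ds\bigg)^{1/\alpha} = \bigg(\int_0^\tau h^*(s)\, b(s)\,ds\bigg)^{1/\alpha}.
\end{equation*}
Raising the rearrangement equivalence to the power $\alpha$ and artificially inserting $B(\tau)/B(\tau)$ yields
\begin{equation*}
\big[(M_{\phi,\Lambda^\alpha(b)} f)^*(t)\big]^\alpha \ap \sup_{\tau \in [t,\infty)} \frac{B(\tau)}{\phi(\tau)^\alpha}\cdot \frac{1}{B(\tau)}\int_0^\tau h^*(s)\,b(s)\,ds = \big(T_{B/\phi^\alpha,\,b}\, h^*\big)(t).
\end{equation*}
Taking $(q/\alpha)$-th powers and integrating against $w$ converts the left-hand side of \eqref{opnorm of M} into (an equivalent constant multiple of) the left-hand side of the reduced inequality. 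On the right-hand side, the identity $[h^*(\tau)]^{p/\alpha} = [f^*(\tau)]^p$ converts $\|f\|_{\GG(p,m,v)}$ verbatim into the expression appearing in the reduced inequality, with $p$ replaced by $p/\alpha$.

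Finally, since the correspondence $f \mapsto |f|^\alpha$ is a bijection from $\mp(\rn)$ onto $\mp^+(\rn)$, the family of admissible $h^*$ arising on the right-hand side exhausts all non-increasing non-negative rearrangements, so the two inequalities are simultaneously valid (or invalid) with comparable best constants. Putting the three steps together proves the ``if and only if'' statement. The sharp constants differ only by the multiplicative factor obtained in the first step, which is controlled by the structural constants of $\phi$ and $B$.
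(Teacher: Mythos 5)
There is a genuine gap, and it concerns exactly the spot you flag as ``the main technical obstacle.'' You posit a \emph{two-sided} rearrangement equivalence
$(M_{\phi,\Lambda^{\alpha}(b)}f)^*(t)\approx \sup_{\tau\ge t}\phi(\tau)^{-1}\big(\int_0^{\tau}[f^*]^{\alpha}b\big)^{1/\alpha}$
valid for every $f\in\mp(\rn)$, and the rest of your argument rides entirely on it. But the lower estimate
\[
(M_{\phi,\Lambda^{\alpha}(b)}g)^* (x) \gtrsim \sup_{\tau \in [x,\infty)} \phi (\tau)^{-1} \bigg( \int_0^{\tau} [g^* (y)]^{\alpha} b(y)\,dy\bigg)^{1/\alpha}
\]
is only available (from \cite[Lemma 3.12]{musbil}, which is what the paper invokes) for $g$ in the class $\mf^{\rad,\dn}(\rn)$ of non-negative radially decreasing functions; it is not claimed, and is not clear, for general $f$. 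Only the upper estimate (\cite[Corollary 3.6]{musbil}) holds for all $f$. Your proposal treats the two directions as symmetric, which they are not, and it is precisely the ``only if'' direction (passing from \eqref{opnorm of M} to the reduced inequality) that needs the lower estimate.

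The paper circumvents this asymmetry with a small but essential extra idea that is absent from your write-up: test \eqref{opnorm of M} only on radially decreasing $g$, where the lower bound is available, and then observe that both sides of the resulting reduced inequality depend on $g$ only through $g^*$; since every $h\in\mp(\rn)$ has a radially decreasing $g$ with $g^*=h^*$, the reduced inequality automatically extends to all $h$. Your final paragraph, about $f\mapsto |f|^\alpha$ being a bijection of $\mp(\rn)$ onto $\mp^+(\rn)$, does not play this role: bijectivity of the power map is beside the point, and nothing in your argument records that it suffices to run the ``only if'' direction over the radially decreasing subclass. The purely algebraic substitution $h^*=(f^*)^{\alpha}$ and the resulting rescaling of both sides (Step 2 of your proposal) are fine and agree with the paper, and the ``if'' direction of your argument works because it uses only the upper estimate. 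To close the gap, either establish the lower estimate for arbitrary $f$ (not what the cited lemma gives and not obviously true), or—as the paper does—restrict to $\mf^{\rad,\dn}(\rn)$ and then extend by matching rearrangements.
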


\begin{proof}
Assume that the inequality 	
$$
\bigg( \int_0^{\infty} \big[ \big(M_{\phi,\Lambda^{\alpha}(b)}f\big)^* (x)\big]^{q} w(x)\,dx\bigg)^{\frac{1}{q}} \le C \bigg( \int_0^{\infty} \bigg( \int_0^x [f^* (\tau)]^{p} \,d\tau \bigg)^{\frac{m}{p}} v(x)\,dx \bigg)^{\frac{1}{m}}
$$
holds for all $f \in \mp (\rn)$. 

Denote by $\mf^{\rad,\dn}(\rn)$ the set of all measurable, non-negative,
radially decreasing functions on $\rn$, that is,
$$
\mf^{\rad,\dn}(\rn) : = \{f \in \mf(\rn):\, f(x) = h(|x|),\,x \in \rn
~\mbox{with}~ h \in \mp^+ ((0,\infty);\dn)\}.
$$

Recall that the inequality
$$
\big(M_{\phi,\Lambda^{\alpha}(b)}g\big)^* (x) \ge C \, \sup_{\tau \in [x,\infty)} \phi (\tau)^{-1} \bigg( \int_0^{\tau} [g^* (y)]^{\alpha} b(y)\,dy\bigg)^{\frac{1}{\alpha}}
$$	
holds for all $g \in \mf^{\rad,\dn}(\rn)$ with constant $C > 0$ independent of $g$ and $x$ (cf. \cite[Lemma 3.12]{musbil}). 

Thus the inequality
$$
\bigg( \int_0^{\infty} \bigg[ \sup_{\tau \in [x,\infty)} \phi (\tau)^{-1} \bigg( \int_0^{\tau} [g^*(y)]^{\alpha} b(y)\,dy\bigg)^{\frac{1}{\alpha}}\bigg]^q w(x)\,dx\bigg)^{\frac{1}{q}} \le C \bigg( \int_0^{\infty} \bigg( \int_0^x [g^* (\tau)]^{p}\,d\tau \bigg)^{\frac{m}{p}} v(x)\,dx \bigg)^{\frac{1}{m}}
$$
holds for all $g \in \mf^{\rad,\dn}(\rn)$, which evidently can be rewritten as follows
$$
\bigg( \int_0^{\infty} \big[ (T_{B/\phi^{\alpha},b} g^*) (x) \big]^{\frac{q}{\alpha}} w(x)\,dx\bigg)^{\frac{1}{q}} \le C \bigg( \int_0^{\infty} \bigg( \int_0^x [g^* (\tau)]^{\frac{p}{\alpha}}\,d\tau \bigg)^{\frac{m}{p}} v(x)\,dx \bigg)^{\frac{1}{m}}, \quad g \in \mf^{\rad,\dn}(\rn).
$$

Since for any $h \in \mp (\rn)$ there exists $g \in \mf^{\rad,\dn}(\rn)$ such that $g^* = h^*$, then the ineqaulity
$$
\bigg( \int_0^{\infty} \big[ (T_{B/\phi^{\alpha},b} h^*) (x) \big]^{\frac{q}{\alpha}} w(x)\,dx\bigg)^{\frac{1}{q}} \le C \bigg( \int_0^{\infty} \bigg( \int_0^x [h^* (\tau)]^{\frac{p}{\alpha}}\,d\tau \bigg)^{\frac{m}{p}} v(x)\,dx \bigg)^{\frac{1}{m}}
$$
holds for all $h \in \mp (\rn)$, as well.

Now assume that the inequality
$$
\bigg( \int_0^{\infty} \big[ (T_{B/\phi^{\alpha},b} h^*) (x) \big]^{\frac{q}{\alpha}} w(x)\,dx\bigg)^{\frac{1}{q}} \le C \bigg( \int_0^{\infty} \bigg( \int_0^x [h^* (\tau)]^{\frac{p}{\alpha}}\,d\tau \bigg)^{\frac{m}{p}} v(x)\,dx \bigg)^{\frac{1}{m}}
$$
holds for all $h \in \mp (\rn)$.

Obviously, the last inequality is equivalent to the inequality
$$
\bigg( \int_0^{\infty} \bigg[ \sup_{\tau \in [x,\infty)} \phi (\tau)^{-1} \bigg( \int_0^{\tau} [f^* (y)]^{\alpha} b(y)\,dy\bigg)^{\frac{1}{\alpha}}\bigg]^{q} w(x)\,dx \bigg)^{\frac{1}{q}} \le C \bigg( \int_0^{\infty} \bigg( \int_0^x [f^* (\tau)]^{p}\,d\tau \bigg)^{\frac{m}{p}} v(x)\,dx \bigg)^{\frac{1}{m}}
$$
for all $f \in \mp (\rn)$. 

Recall that the inequality
$$
(M_{\phi,\Lambda^{\alpha}(b)}f)^* (t) \le C \sup_{\tau \in [t,\infty)} \phi (\tau)^{-1} \bigg( \int_0^{\tau} [f^* (y)]^{\alpha}b(y)\,dy\bigg)^{\frac{1}{\alpha}}
$$
holds for all $f \in \mp (\rn)$ (cf. \cite[Corollary 3.6]{musbil}).

Consequently, the inequality
$$
\bigg( \int_0^{\infty} \big[ \big(M_{\phi,\Lambda^{\alpha}(b)}f\big)^* (x)\big]^{q} w(x)\,dx\bigg)^{\frac{1}{q}} \le C \bigg( \int_0^{\infty} \bigg( \int_0^x [f^* (\tau)]^{p}\,d\tau \bigg)^{\frac{m}{p}} v(x)\,dx \bigg)^{\frac{1}{m}}
$$
holds for all $f \in \mp (\rn)$, as well.

The proof is completed.

%\qed

\end{proof}
%%%%%%%%%%%%%%%%%%%%%%%%%%%%%%%%%%%%%%%%%%%%%%%%%%%%%%%%%%%%%%%%%%%%%%%%%%%%%%%%%%%%%%%%%%%%%%%%%%%%%%%%%%%%%%%%%%%%%%%%%%%%%%%

%%%%%%%%%%%%%%%%%%%%%
Combining Theorem \ref{main.reduc.thm} with Theorems \ref{1stresult}, \ref{2stresult}, \ref{3stresult} and \ref{4stresult}, respectively, we get the following four statements.
\begin{theorem} \label{main1stresult}
	Let	$0 < m \le \alpha \le r < \infty$, $0 < p \le \alpha < q < \infty$ and $b \in \W (0,\infty) \cap \mp^+ ((0,\infty);\dn)$ be such that the function $B(t)$ satisfies  $0 < B(t) < \infty$ for every $t \in (0,\infty)$, $B \in \Delta_2$, $B(\infty) = \infty$ and $B(t) / t^{\alpha / r}$ is quasi-increasing. Moreover, let $\phi \in \W (0,\infty) \cap C(0,\infty)$ be such that $\phi \in Q_{r}(0,\infty)$ is a quasi-increasing function. Assume that $v \in \W_{m,p}(0,\infty)$ and $w \in \W(0,\infty)$.
	Then
	\begin{align*}
    \|M_{\phi,\Lambda^{\alpha}(b)}\|_{\GG(p,m,v) \rightarrow \Lambda^q (w)} & \\
    & \hspace{-3cm} \approx \, \sup_{t \in (0,\infty)} \frac{1}{v_1(t)^{\frac{1}{m}}} \bigg(\int_0^{t} \bigg( \sup_{\tau \in [x,t]} \frac{B(\tau)^{\frac{1}{\alpha}}}{\phi(\tau)} \bigg)^q \, w(x)\,dx\bigg) ^{\frac{1}{q}}
	\\
	& \hspace{-2.5cm} + \, \sup_{t \in (0,\infty)} \frac{B(t)^{\frac{1}{\alpha}}}{v_1(t)^{\frac{1}{m}}}  \bigg( \sup_{\tau \in [t,\infty)} \frac{1}{\phi(\tau)} \bigg) \bigg( \int_0^{t}  w(x) \, dx \bigg) ^{\frac{1}{q}}
	\\
	& \hspace{-2.5cm} + \sup_{t \in (0,\infty)} \frac{B(t)^{\frac{1}{\alpha}}}{v_1(t)^{\frac{1}{m}}}\bigg( \int_t^{\infty} \bigg( \sup_{\tau \in [x,\infty)} \frac{1}{\phi(\tau)}\bigg)^q \, w(x) \, dx \bigg) ^{\frac{1}{q}},
	\end{align*}
	where $v_1$ is defined by \eqref{defof_u}
\end{theorem}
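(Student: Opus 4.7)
The plan is to combine Theorem \ref{main.reduc.thm} with Theorem \ref{1stresult} and perform a rescaling of parameters. First I would invoke Theorem \ref{main.reduc.thm}, whose hypotheses ($\phi\in Q_r$ quasi-increasing, $B\in\Delta_2$, $B(\infty)=\infty$, and $B(t)/t^{\alpha/r}$ quasi-increasing) are all built into the statement, to identify the best constant $\|M_{\phi,\Lambda^{\alpha}(b)}\|_{\GG(p,m,v)\to\Lambda^q(w)}$ in \eqref{opnorm of M} with the best constant $C$ in
\begin{equation*}
\bigg(\int_0^{\infty}\big[T_{B/\phi^{\alpha},b}h^*(x)\big]^{q/\alpha}w(x)\,dx\bigg)^{1/q}\le C\bigg(\int_0^{\infty}\bigg(\int_0^x[h^*(\tau)]^{p/\alpha}\,d\tau\bigg)^{m/p}v(x)\,dx\bigg)^{1/m}, \qquad h\in\mp(\rn).
\end{equation*}

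Next I would raise both sides of this inequality to the power $\alpha$; the resulting inequality is of the exact form treated in Theorem \ref{1stresult} with the weight $u=B/\phi^{\alpha}$ and with rescaled exponents $\tilde p=p/\alpha$, $\tilde m=m/\alpha$, $\tilde q=q/\alpha$, and with best constant $C^{\alpha}$. The parameter constraints $0<\tilde m\le 1$, $0<\tilde p\le 1$, $1<\tilde q<\infty$ from Theorem \ref{1stresult} translate precisely to $m\le\alpha$, $p\le\alpha$, $\alpha<q$, which are in force. Since the class $\W_{m,p}$ and the function $v_1$ defined in \eqref{defof_u} depend only on the ratio $m/p$, which is invariant under the rescaling, we have $v\in\W_{\tilde m,\tilde p}$ and the same $v_1$; moreover $u=B/\phi^{\alpha}\in\W(0,\infty)\cap C(0,\infty)$ because $B$ and $\phi$ are positive and continuous, and $b$ remains non-increasing. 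Thus Theorem \ref{1stresult} applies and yields a three-term equivalent for $C^{\alpha}$.

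The final step is to take the $1/\alpha$-th root. Using the elementary equivalence $(A_1+A_2+A_3)^{1/\alpha}\approx A_1^{1/\alpha}+A_2^{1/\alpha}+A_3^{1/\alpha}$ valid for $A_j\ge 0$, I would apply the $1/\alpha$ power term by term, then simplify the exponents through the identities $(B(\tau)/\phi^{\alpha}(\tau))^{q/\alpha}=(B(\tau)^{1/\alpha}/\phi(\tau))^q$, $(1/\phi^{\alpha}(\tau))^{q/\alpha}=(1/\phi(\tau))^q$, $(v_1(t)^{\alpha/m})^{1/\alpha}=v_1(t)^{1/m}$, $B(t)^{1/\alpha}$ in place of $B(t)$, and the outer exponent $\alpha/q$ becoming $1/q$. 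The three terms then match verbatim the three terms in the conclusion of Theorem \ref{main1stresult}. The only real obstacle is purely algebraic bookkeeping: tracking simultaneously the exponent rescaling inside Theorem \ref{1stresult}, the substitution $u=B/\phi^{\alpha}$, and the outer $\alpha$-th root, and verifying that the resulting expressions reduce to the stated form. No genuinely new analytic input is required beyond the two cited theorems.
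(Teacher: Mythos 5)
Your proposal is correct and follows exactly the route the paper indicates: it invokes Theorem \ref{main.reduc.thm} to pass to the restricted inequality for $T_{B/\phi^{\alpha},b}$, raises to the power $\alpha$ to land in the setting of Theorem \ref{1stresult} with $\tilde p=p/\alpha$, $\tilde m=m/\alpha$, $\tilde q=q/\alpha$, $u=B/\phi^{\alpha}$, and then takes the $1/\alpha$-th root term by term. The bookkeeping you describe (the ratio $m/p$ being scale-invariant so $v_1$ and $\W_{m,p}$ are unchanged, $u(\tau)^{\tilde q}=(B(\tau)^{1/\alpha}/\phi(\tau))^q$, $(u/B)^{\tilde q}=\phi^{-q}$, $(B(t))^{1/\alpha}$, and $v_1(t)^{1/(\alpha\tilde m)}=v_1(t)^{1/m}$) is correct and reproduces the three displayed terms verbatim.
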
	

\begin{theorem} \label{main2stresult}
	Let	$0 < m \le \alpha \le r < \infty$, $\alpha < \min\{p,\,q\} < \infty$ and $b \in \W (0,\infty) \cap \mp^+ ((0,\infty);\dn)$ be such that the function $B(t)$ satisfies  $0 < B(t) < \infty$ for every $t \in (0,\infty)$, $B \in \Delta_2$, $B(\infty) = \infty$ and $B(t) / t^{\alpha / r}$ is quasi-increasing. Moreover, let $\phi \in \W (0,\infty) \cap C(0,\infty)$ be such that $\phi \in Q_{r}(0,\infty)$ is a quasi-increasing function. Assume that $v \in \W_{m,p}(0,\infty)$ and $w \in \W(0,\infty)$.
	\begin{itemize}
	\item[i)] If $p \le q$, then
	\begin{align*}
	\|M_{\phi,\Lambda^{\alpha}(b)}\|_{\GG(p,m,v) \rightarrow \Lambda^q (w)} & \\
	& \hspace{-3cm} \approx \sup_{t \in (0,\infty)} \frac{t^{\frac{1}{p}}}{v_1(t)^{\frac{1}{m}}}  \sup_{s \in [t,\infty)} s^{-\frac{1}{p}} \bigg(\int_0^{s} \bigg( \sup_{\tau \in [x,s]} \frac{B(\tau)^{\frac{1}{\alpha}}}{\phi(\tau)} \bigg)^q \, w(x)\,dx\bigg) ^{\frac{1}{q}}
	\\
	& \hspace{-2.5cm} + \sup_{t \in (0,\infty)} \frac{t^{\frac{1}{p}}}{v_1(t)^{\frac{1}{m}}} \sup_{s \in [(t,\infty)} \bigg( \int_t^s \bigg(\frac{B(\tau)}{\tau}\bigg)^{\frac{p}{p - \alpha}}\,d\tau\bigg)^{\frac{p - \alpha}{p \alpha}} \bigg( \sup_{\tau \in [s,\infty)} \frac{1}{\phi(\tau)} \bigg) \bigg( \int_0^{s}  w(x) \, dx \bigg) ^{\frac{1}{q}}
	\\
	& \hspace{-2.5cm} + \sup_{t \in (0,\infty)} \frac{t^{\frac{1}{p}}}{v_1(t)^{\frac{1}{m}}} \sup_{s \in [t,\infty)} \bigg( \int_t^s \bigg(\frac{B(\tau)}{\tau}\bigg)^{\frac{p}{p - \alpha}}\,d\tau\bigg)^{\frac{p - \alpha}{p \alpha}} \bigg( \int_s^{\infty} \bigg( \sup_{\tau \in [x,\infty)} \frac{1}{\phi(\tau)} \bigg)^q \, w(x) \, dx \bigg) ^{\frac{1}{q}};
	\end{align*}
	
	\item[ii)] If $q <  p$, then
	\begin{align*}
	\|M_{\phi,\Lambda^{\alpha}(b)}\|_{\GG(p,m,v) \rightarrow \Lambda^q (w)} & \\
	& \hspace{-3cm} \approx \, \sup_{t \in (0,\infty)} \frac{1}{v_1(t)^{\frac{1}{m}}} \bigg( \int_0^{t} \bigg( \sup_{\tau \in [x,t]} \frac{B(\tau)^{\frac{1}{\alpha}}}{\phi(\tau)} \bigg)^q \, w(x) \,dx\bigg)^{\frac{1}{q}} 
	\\
	& \hspace{-2.5cm} + \sup_{t \in (0,\infty)} \frac{t^{\frac{1}{p}}}{v_1(t)^{\frac{1}{m}}} \bigg(\int_0^{t} \, w(x) \,dx\bigg)^{\frac{1}{q}} \bigg( \sup_{\tau \in [t,\infty)} \frac{B(\tau)^{\frac{1}{\alpha}}}{\phi(\tau)} \tau^{-\frac{1}{p}} \bigg)
	\\
	& \hspace{-2.5cm} + \sup_{t \in (0,\infty)} \frac{t^{\frac{1}{p}}}{v_1(t)^{\frac{1}{m}}}  \bigg( \int_t^{\infty} \bigg(\sup_{\tau \in [s,\infty)} \bigg( \frac{B(\tau)^{\frac{1}{\alpha}}}{\phi(\tau)}\bigg)^{\frac{pq}{p-q}} \tau^{\frac{q}{q-p}}\bigg) \bigg(\int_{t}^{s} w(x) \, dx \bigg)^{\frac{q}{p-q}} w(s) \,ds\bigg)^{\frac{p-q}{pq}} \\
	& \hspace{-2.5cm} + \sup_{t \in (0,\infty)} \frac{t^{\frac{1}{p}}}{v_1(t)^{\frac{1}{m}}}  \bigg(\int_t^{\infty} \bigg(\int_{t}^{s} \bigg( \sup_{y\in [x,s]} \frac{B(y)^{\frac{1}{\alpha}}}{\phi(y)} \bigg)^q w(x) \,dx\bigg)^{\frac{q}{p-q}} \bigg(\sup_{\tau \in [s,\infty)} \bigg( \frac{B(\tau)^{\frac{1}{\alpha}}}{\phi(\tau)}\bigg)^q \tau^{\frac{q}{q-p}}\bigg) w(s)\, ds\bigg)^{\frac{p-q}{pq}}
	\\
	& \hspace{-2.5cm} + \sup_{t \in (0,\infty)} \frac{t^{\frac{1}{p}}}{v_1(t)^{\frac{1}{m}}} \bigg(\int_0^{t} w(x)\,dx\bigg)^{\frac{1}{q}} \bigg( \sup_{\tau \in [t,\infty)} \frac{1}{\phi(\tau)} \bigg( \int_{t}^{\tau} \tilde{\mathcal B}(t,s) \,ds \bigg)^{\frac{p-q}{pq}} \bigg) \\
	& \hspace{-2.5cm} + \sup_{t \in (0,\infty)} \frac{t^{\frac{1}{p}}}{v_1(t)^{\frac{1}{m}}}  \bigg( \int_t^{\infty} \bigg[\sup_{\tau \in [x,\infty)}\bigg[\sup_{y \in [\tau,\infty)} \bigg(\frac{1}{\phi(y)}\bigg)^\frac{pq}{p-q}\bigg]\bigg(\int_{t}^{\tau} \tilde{\mathcal B}(t,s) \,ds \bigg) \bigg]\bigg(\int_{t}^{x} w(y)\,dy \bigg)^{\frac{q}{p-q}} w(x) \,dx \bigg)^{\frac{p-q}{pq}}
	\\
	& \hspace{-2.5cm} + \sup_{t \in (0,\infty)} \frac{t^{\frac{1}{p}}}{v_1(t)^{\frac{1}{m}}}  \bigg( \int_t^{\infty}\bigg(\int_{x}^{\infty} \bigg(\sup_{\tau \in [y,\infty)} \frac{B(\tau)^{\frac{1}{\alpha}}}{\phi(\tau)} \bigg)^q w(y)\, dy\bigg)^{\frac{q}{p-q}} \bigg( \sup_{\tau \in [x,\infty)} \frac{B(\tau)^{\frac{1}{\alpha}}}{\phi(\tau)} \bigg)^q \bigg(\int_{t}^{x} \tilde{\mathcal B}(t,s) \,ds \bigg) w(x) \,dx\bigg)^{\frac{p-q}{pq}},
	\end{align*} 
	where 
	$$
	\tilde{\mathcal B}(t,s) := \bigg( \int_t^s \bigg( \frac{B(y)}{y} \bigg)^{\frac{p}{p - \alpha}}\,dy \bigg)^{\frac{p(q - \alpha)}{\alpha (p - q)}}  \,  \bigg( \frac{B(s)}{s} \bigg)^{\frac{p}{p - \alpha}}, \qquad 0 < t < s < \infty.
	$$
\end{itemize}
\end{theorem}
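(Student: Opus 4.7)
The plan is to derive Theorem~\ref{main2stresult} as an immediate consequence of combining Theorem~\ref{main.reduc.thm} with Theorem~\ref{2stresult}, via a systematic rescaling of the parameters by $\alpha$. First I would invoke Theorem~\ref{main.reduc.thm} to reduce the boundedness of $M_{\phi,\Lambda^{\alpha}(b)}$ from $\GG(p,m,v)$ into $\Lambda^q(w)$ to the iterated inequality
$$
\bigg(\int_0^{\infty}\bigl[T_{B/\phi^{\alpha},b}h^{*}(x)\bigr]^{q/\alpha}w(x)\,dx\bigg)^{1/q}\le C\bigg(\int_0^{\infty}\bigg(\int_0^x[h^{*}(\tau)]^{p/\alpha}\,d\tau\bigg)^{m/p}v(x)\,dx\bigg)^{1/m}.
$$
Raising both sides to the power $\alpha$ puts this inequality into the form treated in Theorem~\ref{2stresult}, with rescaled parameters $P:=p/\alpha$, $M:=m/\alpha$, $Q:=q/\alpha$, weight $u:=B/\phi^{\alpha}$, and the same $b,v,w$.

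Next I would verify that all hypotheses of Theorem~\ref{2stresult} are met with the rescaled parameters: the assumption $0<m\le\alpha<\min\{p,q\}<\infty$ gives exactly $0<M\le 1$, $1<P<\infty$, $1<Q<\infty$. Moreover, since the ratio $M/P=m/p$ is unchanged, the membership $v\in\W_{m,p}(0,\infty)$ is equivalent to $v\in\W_{M,P}(0,\infty)$ and the weight $v_1$ defined in~\eqref{defof_u} is the same for both parameter choices. The continuity of $u=B/\phi^{\alpha}$ on $(0,\infty)$ follows from the assumption that $\phi$ is continuous and $B$ is (automatically, as an integral of a locally integrable weight) continuous. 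The case split $p\le q$ versus $q<p$ is preserved under the rescaling, so (i) of Theorem~\ref{main2stresult} falls under (i) of Theorem~\ref{2stresult}, and similarly for (ii).

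Finally, applying Theorem~\ref{2stresult} with the substituted parameters yields the characterization, and the expressions in Theorem~\ref{main2stresult} are obtained by routine simplification: using $1/P=\alpha/p$, $1/M=\alpha/m$, $1/Q=\alpha/q$; using $u(\tau)/B(\tau)=1/\phi(\tau)^{\alpha}$ so that $u(\tau)=B(\tau)/\phi(\tau)^{\alpha}$ and $u(\tau)^{Q}=(B(\tau)^{1/\alpha}/\phi(\tau))^{q}\cdot(\dots)$ after taking an overall $\alpha$-th root; computing $P'=p/(p-\alpha)$, which produces the inner expression $\bigl(\int_t^s (B(y)/y)^{p/(p-\alpha)}\,dy\bigr)^{(p-\alpha)/(p\alpha)}$ appearing in part~(i); and computing
$$
\frac{P(Q-1)}{P-Q}=\frac{p(q-\alpha)}{\alpha(p-q)},\qquad \frac{P-Q}{PQ}=\frac{p-q}{pq},\qquad \frac{Q}{P-Q}=\frac{q}{p-q},
$$
so that the kernel $\mathcal{B}$ of Theorem~\ref{2stresult} becomes precisely $\tilde{\mathcal{B}}$ as defined in part~(ii).

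The main obstacle is purely bookkeeping: in case (ii) there are seven compound terms involving nested suprema, integrals, and the kernel $\tilde{\mathcal{B}}$, and one must carefully track how each exponent $1/p',q/(p-q),(p-q)/(pq)$, etc., transforms under the rescaling, and then absorb the final $1/\alpha$ power throughout. There is no new analytic content beyond the two ingredients (Theorems \ref{main.reduc.thm} and \ref{2stresult}); once the correspondence $(P,M,Q,u)\leftrightarrow(p/\alpha,m/\alpha,q/\alpha,B/\phi^{\alpha})$ is fixed and one extra $\alpha$-th root is taken, the asserted equivalence follows term by term.
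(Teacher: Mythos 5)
Your proposal is correct and takes exactly the approach the paper intends: the paper states this theorem (together with its three companions) as an immediate consequence of combining Theorem~\ref{main.reduc.thm} with Theorem~\ref{2stresult}, and your substitution $(P,M,Q,u)=(p/\alpha,m/\alpha,q/\alpha,B/\phi^{\alpha})$ followed by taking an overall $\alpha$-th root is precisely that verification. One tiny bookkeeping slip: $\tfrac{P-Q}{PQ}=\tfrac{\alpha(p-q)}{pq}$, not $\tfrac{p-q}{pq}$; the extra factor $\alpha$ is absorbed when you take the final $\alpha$-th root, so the stated conclusion is unaffected.
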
	

\begin{theorem}\label{main3stresult}
Let $0 < p \le \alpha \le r < \infty$, $\alpha < \min\{m,\,q\} < \infty$ and $b \in \W (0,\infty) \cap \mp^+ ((0,\infty);\dn)$ be such that the function $B(t)$ satisfies  $0 < B(t) < \infty$ for every $t \in (0,\infty)$, $B \in \Delta_2$, $B(\infty) = \infty$ and $B(t) / t^{\alpha / r}$ is quasi-increasing. Moreover, let $\phi \in \W (0,\infty) \cap C(0,\infty)$ be such that $\phi \in Q_{r}(0,\infty)$ is a quasi-increasing function. Assume that $v \in \W_{m,p}(0,\infty)$ and $w \in \W(0,\infty)$.
\begin{itemize}
	\item[i)] If $m \le q$, then
	\begin{align*}
	\|M_{\phi,\Lambda^{\alpha}(b)}\|_{\GG(p,m,v) \rightarrow \Lambda^q (w)} & \\
	& \hspace{-3cm} \approx \, \sup_{t \in (0,\infty)} \bigg( \int_t^{\infty} \frac{v_0(s)}{v_1(s)^{\frac{2m - \alpha}{m - \alpha}}}\,ds \bigg)^{\frac{m - \alpha}{m \alpha}}  \bigg( \int_0^{t} \bigg( \sup_{\tau \in [x,t]} \frac{B(\tau)^{\frac{1}{\alpha}}}{\phi(\tau)}\bigg)^q \,w(x)\,dx\bigg)^{\frac{1}{q}} 
	\\
	& \hspace{-2.5cm} + \sup_{t \in (0,\infty)} \bigg( \int_0^t \frac{B(s)^{\frac{m}{m - \alpha}}v_0(s)}{v_1(s)^{\frac{2m - \alpha}{m - \alpha}}}\,ds \bigg)^{\frac{m - \alpha}{m \alpha}} \bigg( \sup_{\tau \in[t,\infty)} \frac{1}{\phi (\tau)} \bigg) \bigg( \int_0^{t} w(x)\,dx\bigg)^{\frac{1}{q}}
	\\
	& \hspace{-2.5cm} + \sup_{t \in (0,\infty)} \bigg( \int_0^t \frac{B(s)^{\frac{m}{m - \alpha}}v_0(s)}{v_1(s)^{\frac{2m - \alpha}{m - \alpha}}}\,ds \bigg)^{\frac{m - \alpha}{m \alpha}} \bigg( \int_t^{\infty} \bigg( \sup_{\tau \in[x,\infty)}  \frac{1}{\phi (\tau)} \bigg)^q  w(x)\,dx\bigg)^{\frac{1}{q}};
	\end{align*}
		
		\item[ii)] If $q < m$, then
		\begin{align*}
		\|M_{\phi,\Lambda^{\alpha}(b)}\|_{\GG(p,m,v) \rightarrow \Lambda^q (w)} & \\
		& \hspace{-3cm} \approx \, \bigg(\int_{0}^{\infty}\bigg[\sup_{\tau \in [t,\infty)} \bigg( \frac{B(\tau)^{\frac{1}{\alpha}}}{\phi(\tau)} \bigg)^\frac{mq}{m-q} \bigg(\int_{\tau}^{\infty} \tilde{\mathfrak{B}}_1 (s) \,ds \bigg)\bigg]\bigg(\int_{0}^{t} w(x)\,dx\bigg)^\frac{q}{m-q}w(t) \, dt\bigg)^\frac{m-q}{mq}
		\\
		& \hspace{-2.5cm} + \bigg(\int_{0}^{\infty}\bigg(\int_{0}^{t}\bigg( \sup_{y \in [x, t]} \frac{B(y)^{\frac{1}{\alpha}}}{\phi(y)} \bigg)^q \,w(x)\,dx\bigg)^\frac{q}{m-q} \bigg[\sup_{\tau \in [t,\infty)} \bigg( \frac{B(\tau)^{\frac{1}{\alpha}}}{\phi(\tau)} \bigg)^q \bigg(\int_{\tau}^{\infty} \tilde{\mathfrak{B}}_1 (s) \,ds \bigg)\bigg]w(t) \, dt\bigg)^\frac{m-q}{mq}
		\\
		& \hspace{-2.5cm} + \bigg(\int_{0}^{\infty}\bigg[\sup_{\tau \in [t,\infty)}\bigg[\sup_{s \in [\tau,\infty)}\bigg(\frac{1}{\phi(s)}\bigg)^{\frac{mq}{m-q}} \bigg]\bigg(\int_{0}^{\tau} \tilde{\mathfrak{B}}_2 (s) \,ds \bigg)\bigg] \bigg(\int_{0}^{t}w(x)\,dx\bigg)^\frac{q}{m-q} w(t) \, dt\bigg)^\frac{m-q}{mq}
		\\
		& \hspace{-2.5cm} + \bigg(\int_{0}^{\infty}\bigg(\int_{t}^{\infty}\bigg( \sup_{\tau \in [x,\infty) } \frac{1}{\phi(\tau)}\bigg)^q w(x)\,dx\bigg)^\frac{q}{m-q} \, \bigg( \sup_{\tau \in [t,\infty)} \frac{1}{\phi(\tau)}\bigg)^q \bigg(\int_{0}^{t} \tilde{\mathfrak{B}}_2 (s) \,ds \bigg) w(t) \,dt\bigg)^\frac{m-q}{mq},
		\end{align*}  
		where functions $\tilde{\mathfrak{B}}_1$ and $\tilde{\mathfrak{B}}_2$ are defined for all $s \in (0,\infty)$ by
		$$
		\tilde{\mathfrak{B}}_1(s) : = \bigg( \int_s^{\infty} \frac{v_0(t)}{v_1(t)^{\frac{2m - \alpha}{m - \alpha}}}\,dt\bigg)^{\frac{m(q - \alpha)}{\alpha (m - q)}} \frac{v_0(s)}{v_1(s)^{\frac{2m - \alpha}{m - \alpha}}}, \quad 
		\tilde{\mathfrak{B}}_2(s) : = \bigg( \int_0^s \frac{B(t)^{\frac{m}{m - \alpha}}v_0(t)}{v_1(t)^{\frac{2m - \alpha}{m - \alpha}}}\,dt \bigg)^{\frac{m(q - \alpha)}{\alpha (m - q)}} \frac{B(s)^{\frac{m}{m - \alpha}}v_0(s)}{v_1(s)^{\frac{2m - \alpha}{m - \alpha}}},
		$$	
		respectively.
	\end{itemize}
\end{theorem}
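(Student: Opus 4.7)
The plan is to combine Theorem \ref{main.reduc.thm} with Theorem \ref{3stresult}, following the same scheme that yields Theorems \ref{main1stresult} and \ref{main2stresult} from Theorems \ref{1stresult} and \ref{2stresult}. The hypotheses of Theorem \ref{main3stresult} on $\phi$, $b$, and $B$ are exactly what is required by the reduction theorem, so the first step is to invoke Theorem \ref{main.reduc.thm} and replace the target inequality by
\begin{equation*}
\bigg( \int_0^{\infty} \big[ T_{B/\phi^{\alpha},b} h^* (x) \big]^{\frac{q}{\alpha}} w(x)\,dx\bigg)^{\frac{1}{q}} \le C \bigg( \int_0^{\infty} \bigg( \int_0^x [h^* (\tau)]^{\frac{p}{\alpha}}\,d\tau \bigg)^{\frac{m}{p}} v(x)\,dx \bigg)^{\frac{1}{m}},
\end{equation*}
with the best constant preserved up to equivalence.

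Next, I would raise both sides to the $\alpha$-th power and set $\tilde m := m/\alpha$, $\tilde p := p/\alpha$, $\tilde q := q/\alpha$. The resulting inequality is precisely an instance of \eqref{eq.1.1} with kernel $u := B/\phi^{\alpha}$ and exponents $\tilde m,\tilde p,\tilde q$ in place of $m,p,q$. The hypotheses $0 < p \le \alpha$, $\alpha < m$, $\alpha < q$ translate into $0 < \tilde p \le 1$, $1 < \tilde m < \infty$, $1 < \tilde q < \infty$, which are the hypotheses of Theorem \ref{3stresult}. Since $\tilde m/\tilde p = m/p$, the weights $v_0$ and $v_1$ from \eqref{defof_v} and \eqref{defof_u} do not change under this rescaling, so the assumption $v \in \W_{m,p}(0,\infty)$ is the same as $v \in \W_{\tilde m,\tilde p}(0,\infty)$. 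The continuity and positivity requirements on $u$ are inherited from those on $\phi$ and $B$.

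At this point one splits into the two cases $\tilde m \le \tilde q$ (equivalently $m \le q$) and $\tilde q < \tilde m$ (equivalently $q < m$) and invokes parts (i) and (ii) of Theorem \ref{3stresult}, respectively, then takes the $(1/\alpha)$-th power of the resulting expression for the best constant. What remains is a purely arithmetic substitution: from $\tilde m' = m/(m-\alpha)$ one obtains $\tilde m' + 1 = (2m-\alpha)/(m-\alpha)$ and $1/\tilde m' = (m-\alpha)/m$; the identities $u^{\tilde q} = (B^{1/\alpha}/\phi)^{q}$, $u/B = 1/\phi^{\alpha}$ and $u^{\tilde m \tilde q/(\tilde m - \tilde q)} = (B^{1/\alpha}/\phi)^{mq/(m-q)}$ convert every supremum in the formulas of Theorem \ref{3stresult} into one involving $B^{1/\alpha}/\phi$ or $1/\phi$; the auxiliary kernels $\mathfrak B_1,\mathfrak B_2$ collapse into $\tilde{\mathfrak B}_1,\tilde{\mathfrak B}_2$ of the present statement; and outer exponents transform correctly, for instance $(\tilde m - \tilde q)/(\tilde m \tilde q) \cdot (1/\alpha) = (m-q)/(mq)$.

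No new analytic ingredient is needed beyond the two cited results. The only real obstacle is the bookkeeping of several exponents at once—especially $\tilde m'$, $\tilde m'+1$ and the various fractions that appear in the $q < m$ case—and the verification that the $(1/\alpha)$-th power distributes correctly across each of the four terms of Theorem \ref{3stresult}(ii). Provided one is systematic about this algebraic check, the equivalences stated in parts (i) and (ii) of Theorem \ref{main3stresult} follow by direct substitution.
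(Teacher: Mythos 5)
Your proposal is correct and takes essentially the same route as the paper: the paper's own proof consists precisely of the one-line observation, immediately preceding Theorem~\ref{main1stresult}, that Theorems~\ref{main1stresult}--\ref{main4stresult} follow by combining the reduction Theorem~\ref{main.reduc.thm} with Theorems~\ref{1stresult}--\ref{4stresult} respectively. Your explicit account of the substitution $(\tilde m,\tilde p,\tilde q)=(m/\alpha,p/\alpha,q/\alpha)$, $u=B/\phi^{\alpha}$, the invariance of $v_0,v_1,\W_{m,p}$ under this rescaling, the relation $\tilde K=K^{1/\alpha}$ between best constants, and the resulting exponent identities $\tilde m'=m/(m-\alpha)$, $\tilde m'+1=(2m-\alpha)/(m-\alpha)$, $\tilde m(\tilde q-1)/(\tilde m-\tilde q)=m(q-\alpha)/(\alpha(m-q))$ is exactly the bookkeeping the paper leaves implicit, and it checks out.
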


\begin{theorem}\label{main4stresult}
	Let $0 < \alpha \le r < \infty$, $\alpha < \min\{m,\,p,\,q\} < \infty$ and  and $b \in \W (0,\infty) \cap \mp^+ ((0,\infty);\dn)$ be such that the function $B(t)$ satisfies  $0 < B(t) < \infty$ for every $t \in (0,\infty)$, $B \in \Delta_2$, $B(\infty) = \infty$ and $B(t) / t^{\alpha / r}$ is quasi-increasing. Moreover, let $\phi \in \W (0,\infty) \cap C(0,\infty)$ be such that $\phi \in Q_{r}(0,\infty)$ is a quasi-increasing function. Assume that $v \in \W_{m,p}(0,\infty)$ and $w \in \W(0,\infty)$.
	Suppose that
	\begin{gather*}
	\int_0^t \tilde{v}_2(s)\,ds < \infty, \quad \int_t^{\infty} s^{-\frac{m \alpha}{p (m - \alpha)}} \tilde{v}_2(s)\,ds < \infty, \qquad 0 < \int_0^t \bigg( \int_s^t \bigg( \frac{B(y)}{y} \bigg)^{\frac{p}{p - \alpha}}\,dy \bigg)^{\frac{m (p - \alpha)}{p (m - \alpha)}} \tilde{v}_2(s)\,ds < \infty, \qquad t \in (0,\infty), \\
    \int_0^1 s^{-\frac{m \alpha}{p (m - \alpha)}} \tilde{v}_2(s)\,ds = \int_1^{\infty} \tilde{v}_2(s) \,ds = \infty,
	\end{gather*}
	where the function $\tilde{v}_2$ is defined by
	\begin{equation*}
	\tilde{v}_2(t) : = \frac{t^{\frac{m (p - \alpha)}{p (m - \alpha)}} v_0(t)}{v_1(t)^{\frac{2m - \alpha}{m - \alpha}}}, \qquad t \in (0,\infty).
	\end{equation*} 
	
	\item[i)] If  $\max \{p,\,m\} \le q$, then
	\begin{align*}
	\|M_{\phi,\Lambda^{\alpha}(b)}\|_{\GG(p,m,v) \rightarrow \Lambda^q (w)} & \\
	& \hspace{-3cm} \approx \, \sup_{t \in (0,\infty)}\bigg( \int_0^t \bigg( \int_s^t \bigg( \frac{B(y)}{y} \bigg)^{\frac{p}{p - \alpha}} \, dy \bigg)^{\frac{m (p - \alpha)}{p (m - \alpha)}} \tilde{v}_2(s) \, ds \bigg)^{\frac{m - \alpha}{m \alpha}} 
	\bigg( \sup_{\tau \in [t,\infty)} \frac{1}{\phi(\tau)} \bigg) \bigg( \int_0^{t} w(x) \, dx 	\bigg)^{\frac{1}{q}}
	\\
	& \hspace{-2.5cm} + \sup_{t \in (0,\infty)}\bigg( \int_0^t \bigg( \int_s^t \bigg( \frac{B(y)}{y} \bigg)^{\frac{p}{p - \alpha}} \, dy \bigg)^{\frac{m (p - \alpha)}{p (m - \alpha)}} \tilde{v}_2(s) \, ds \bigg)^{\frac{m - \alpha}{m \alpha}} 
	\bigg(\int_t^{\infty} \bigg( \sup_{\tau \in [x,\infty)} \frac{1}{\phi(\tau)} \bigg)^q w(x) \,dx \bigg)^{\frac{1}{q}}
	\\
	& \hspace{-2.5cm} + \sup_{t \in (0,\infty)}  \bigg( \int_0^{\infty} \bigg( \frac{1}{s + t}\bigg)^{\frac{m \alpha}{p (m - \alpha)}} \tilde{v}_2(s) \, ds \bigg)^{\frac{m - \alpha}{m \alpha}} 
	\bigg(\int_0^{t} \bigg( \sup_{\tau \in [x,t]} \frac{B(\tau)^{\frac{1}{\alpha}}}{\phi(\tau)}\bigg)^q w(x) \,dx \bigg)^{\frac{1}{q}};
	\end{align*}
	
	\item[ii)] If $m \le q < p$, then
	\begin{align*}
	\|M_{\phi,\Lambda^{\alpha}(b)}\|_{\GG(p,m,v) \rightarrow \Lambda^q (w)} & \\
	& \hspace{-3cm} \approx \, \sup_{t \in (0,\infty)}\bigg( \int_0^t \bigg( \int_s^t \bigg( \frac{B(y)}{y} \bigg)^{\frac{p}{p - \alpha}} \, dy \bigg)^{\frac{m (p - \alpha)}{p (m - \alpha)}} \tilde{v}_2(s) \, ds \bigg)^{\frac{m - \alpha}{m \alpha}} 
	\bigg( \sup_{\tau \in [t,\infty)} \frac{1}{\phi(\tau)} \bigg) \bigg( \int_0^{t} w(x) \, dx 	\bigg)^{\frac{1}{q}}
	\\
	& \hspace{-2.5cm} + \sup_{t \in (0,\infty)}\bigg( \int_0^t \bigg( \int_s^t \bigg( \frac{B(y)}{y} \bigg)^{\frac{p}{p - \alpha}} \, dy \bigg)^{\frac{m (p - \alpha)}{p (m - \alpha)}} \tilde{v}_2(s) \, ds \bigg)^{\frac{m - \alpha}{m \alpha}} 
	\bigg(\int_t^{\infty} \bigg( \sup_{\tau \in [x,\infty)} \frac{1}{\phi(\tau)} \bigg)^q w(x) \,dx \bigg)^{\frac{1}{q}}
	\\
	& \hspace{-2.5cm} + \sup_{t \in (0,\infty)}\bigg( \int_0^t \tilde{v}_2 (s)\,ds \bigg)^{\frac{m - \alpha}{m \alpha}} \bigg( \sup_{\tau \in [t,\infty)} \frac{1}{\phi(\tau)} \bigg( \int_{t}^{\tau} \tilde{\mathcal B}(t,y) \,dy \bigg)^{\frac{p-q}{pq}} \bigg)
	\bigg(\int_0^{t} w(x) \, dx\bigg)^{\frac{1}{q}} \\
	& \hspace{-2.5cm} + \sup_{t \in (0,\infty)} \bigg( \int_0^t \tilde{v}_2 (s)\,ds \bigg)^{\frac{m - \alpha}{m \alpha}} \times \\
	& \hspace{-0.5cm} \times \bigg( \int_t^{\infty} \bigg[\sup_{\tau \in [s,\infty)}\bigg[\sup_{x \in [\tau,\infty)} \bigg(\frac{1}{\phi(x)}\bigg)^\frac{pq}{p-q}\bigg]\bigg(\int_{t}^{\tau} \tilde{\mathcal{B}}(t,y) \,dy\bigg)\bigg]\bigg(\int_{t}^{s}w(y)\,dy \bigg)^{\frac{q}{p-q}} w(s) \,ds\bigg)^{\frac{p-q}{pq}}
	\\
	& \hspace{-2.5cm} + \sup_{t \in (0,\infty)}\bigg( \int_0^t \tilde{v}_2 (s)\,ds \bigg)^{\frac{m - \alpha}{m \alpha}} \times \\
	& \hspace{-0.5cm} \times \bigg( \int_t^{\infty}\bigg(\int_{s}^{\infty} \bigg( \sup_{\tau \in [x,\infty)} \frac{1}{\phi(\tau)}\bigg)^q w(x) \, dx\bigg)^{\frac{q}{p-q}} \bigg( \sup_{\tau \in [s,\infty)} \frac{1}{\phi(\tau)}\bigg)^q \bigg(\int_{t}^{s} \tilde{\mathcal{B}}(t,y) \,dy \bigg) w(s) \,ds\bigg)^{\frac{p-q}{pq}}
	\\
	& \hspace{-2.5cm} + \sup_{t \in (0,\infty)} \bigg(\int_{0}^{\infty} \bigg(\frac{t}{s + t}\bigg)^{\frac{m \alpha}{p (m - \alpha)}} \tilde{v}_2(s) \, ds\bigg)^{\frac{m - \alpha}{m \alpha}} \times \\
	& \hspace{-0.5cm} \times \bigg(\int_{0}^{\infty}\bigg[\sup_{\tau \in [t,\infty)} \bigg( \frac{B(\tau)^{\frac{1}{\alpha}}}{\phi(\tau)} \bigg)^\frac{pq}{p-q}(\tau+t)^\frac{q}{q-p}\bigg]\bigg(\int_{0}^{t}w(x)\,dx\bigg)^\frac{q}{p-q} w(t)\, dt\bigg)^\frac{p-q}{pq}
	\\
	& \hspace{-2.5cm} + \sup_{t \in (0,\infty)} \bigg(\int_{0}^{\infty} \bigg(\frac{t}{s + t}\bigg)^{\frac{m \alpha}{p (m - \alpha)}} \tilde{v}_2(s) \, ds\bigg)^{\frac{m - \alpha}{m \alpha}} \times \\
	& \hspace{-0.5cm} \times \bigg(\int_{0}^{\infty}\bigg(\int_{0}^{t}\bigg( \sup_{y \in [x, t]} \frac{B(y)^{\frac{1}{\alpha}}}{\phi(y)} \bigg)^q \,w(x)\,dx\bigg)^\frac{q}{p-q} \bigg[\sup_{\tau \in [t,\infty)}\bigg( \frac{B(\tau)^{\frac{1}{\alpha}}}{\phi(\tau)} \bigg)^q (\tau+t)^\frac{q}{q-p}\bigg] w(t) \, dt\bigg)^\frac{p-q}{pq}.
	\end{align*}
\end{theorem}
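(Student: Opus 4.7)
The plan is to obtain Theorem \ref{main4stresult} by composing Theorem \ref{main.reduc.thm} with Theorem \ref{4stresult}, exactly parallel to how Theorems \ref{main1stresult}, \ref{main2stresult} and \ref{main3stresult} were derived from Theorems \ref{1stresult}, \ref{2stresult} and \ref{3stresult} respectively. First, Theorem \ref{main.reduc.thm} translates the operator norm inequality \eqref{opnorm of M} into the restricted inequality
$$
\bigg( \int_0^{\infty} \big[ T_{B/\phi^{\alpha},b} h^* (x) \big]^{\frac{q}{\alpha}} w(x)\,dx\bigg)^{\frac{1}{q}} \le C \bigg( \int_0^{\infty} \bigg( \int_0^x [h^* (\tau)]^{\frac{p}{\alpha}}\,d\tau \bigg)^{\frac{m}{p}} v(x)\,dx \bigg)^{\frac{1}{m}},
$$
which (after raising both sides to the $\alpha$-th power) is exactly the inequality treated in Theorem \ref{4stresult} with the choices $\tilde u := B/\phi^{\alpha}$, $\tilde p := p/\alpha$, $\tilde m := m/\alpha$, $\tilde q := q/\alpha$, and the same weights $b,v,w$.

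Second, I would verify that the hypotheses of Theorem \ref{4stresult} hold in this transformed setting. The requirements $1 < \tilde p, \tilde m, \tilde q$ translate into the assumption $\alpha < \min\{m,p,q\}$. The membership $v \in \W_{\tilde m,\tilde p}(0,\infty)$ is equivalent to $v \in \W_{m,p}(0,\infty)$ because the ratio $\tilde m/\tilde p = m/p$ coincides, so the functions $v_0$ and $v_1$ defined by \eqref{defof_v} and \eqref{defof_u} are the unchanged. The continuity and positivity of $\tilde u = B/\phi^\alpha$ follow from continuity of $B$ (as an integral of $b$) and from $\phi \in C(0,\infty)$ with positive values. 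The non-degeneracy conditions on $v_2$ in Theorem \ref{4stresult} translate, after substitution, into precisely the conditions imposed on $\tilde v_2$ in the statement, since $\tilde m'/\tilde p' = m(p-\alpha)/(p(m-\alpha))$, $\tilde m'+1 = (2m-\alpha)/(m-\alpha)$, $\tilde p' = p/(p-\alpha)$ and $\tilde m'/\tilde p = m\alpha/(p(m-\alpha))$.

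Third, the two cases of Theorem \ref{4stresult} match the two cases of Theorem \ref{main4stresult}: $\max\{\tilde p,\tilde m\} \le \tilde q$ reduces to $\max\{p,m\} \le q$, and $\tilde m \le \tilde q < \tilde p$ reduces to $m \le q < p$. After the substitutions, the auxiliary function $\mathcal B$ from Theorem \ref{4stresult} becomes $\tilde{\mathcal B}$ as defined in Theorem \ref{main2stresult}, and $v_2$ becomes $\tilde v_2$. The supremum $\sup_{\tau \ge t} u(\tau)/B(\tau)$ collapses to $\sup_{\tau \ge t} 1/\phi(\tau)$, and every term of the form $u(\tau)$ unrelated to $B$ produces $B(\tau)^{1/\alpha}/\phi(\tau)$ after raising to the $1/\alpha$-th power. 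Carrying out the substitution on each of the three (resp. seven) summands appearing in case (i) (resp. case (ii)) of Theorem \ref{4stresult} and extracting the overall $1/\alpha$-th power yields the expressions listed in the two cases of Theorem \ref{main4stresult}.

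The proof is therefore essentially bookkeeping; the only real obstacle is the careful translation of the multiple exponents and of the inner suprema through the substitution $u \mapsto B/\phi^\alpha$ together with the rescaling of $p,m,q$ by $1/\alpha$. In particular, one has to be patient in rewriting $\tilde m'/\tilde p'$, $\tilde m'+1$, $\tilde p'$, $(\tilde p - \tilde q)/(\tilde p \tilde q) = (p-q)/(pq)$, $\tilde p \tilde q/(\tilde p - \tilde q) = pq/(p-q)$, etc., in terms of the original parameters, and in verifying that each of the non-degeneracy requirements listed in Theorem \ref{main4stresult} is the precise counterpart of those needed in Theorem \ref{4stresult}. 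Since no new analytic argument is required beyond the two already-proved theorems, this completes the proof.
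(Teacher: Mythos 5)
Your proposal is correct and follows the paper's own route exactly: the paper explicitly derives Theorem \ref{main4stresult} by combining Theorem \ref{main.reduc.thm} with Theorem \ref{4stresult}, which is the same reduction-plus-substitution argument you carry out, and your bookkeeping of the rescaled exponents ($\tilde p'=p/(p-\alpha)$, $\tilde m'=m/(m-\alpha)$, $\tilde m'/\tilde p'$, $\tilde m'/\tilde p$, $\tilde m'+1$, and the overall extra $1/\alpha$-power) is accurate.
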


%%%%%%%%%%%%%%%%%%%%%%%%%%%%%%%%%%%%%%%%%%%%%%%%%%%%%%%%%%%%%%%%%%%%%%%%%%%%%%%%%%%%%%%%%%%%%%%%%%%%%%%%%%%%%%%%%%%%%%%%%%%%%%%%%%%%%%%%%%%%%%%%%%%%%%%%%%%%%%%%%%%%%%%%%%%%%%%%%%%%%%%%%%%%%%%%%%%%%%%%%%%%%%%%%%%%%%%%%%%%

\begin{bibdiv}
	\begin{biblist}
			
		\bib{AFFGR}{article}{
			author={Ahmed, I.},
			author={Fiorenza, A.},
			author={Formica, M. R.},
			author={Gogatishvili, A.},
			author={Rakotoson, J. M.},
			title={Some new results related to Lorentz $G\Gamma$-spaces and interpolation},
			journal={J. Math. Anal. Appl.},
			volume={483},
			date={2020},
			number={2},
			pages={123623, 24},
			issn={0022-247X},
			review={\MR{4026493}},
			doi={10.1016/j.jmaa.2019.123623},
		}
		
		\bib{basmilruiz}{article}{
			author={Bastero, J.},
			author={Milman, M.},
			author={Ruiz, F. J.},
			title={Rearrangement of Hardy-Littlewood maximal functions in Lorentz
				spaces},
			journal={Proc. Amer. Math. Soc.},
			volume={128},
			date={2000},
			number={1},
			pages={65--74},
			issn={0002-9939},
			review={\MR{1641637 (2000c:42020)}},
			doi={10.1090/S0002-9939-99-05128-X},
		}
		
		\bib{cpss}{article}{
			author={Carro, M.},
			author={Pick, L.},
			author={Soria, J.},
			author={Stepanov, V. D.},
			title={On embeddings between classical Lorentz spaces},
			journal={Math. Inequal. Appl.},
			volume={4},
			date={2001},
			number={3},
			pages={397--428},
			issn={1331-4343},
			review={\MR{1841071 (2002d:46026)}},
			doi={10.7153/mia-04-37},
		}
		
		\bib{cwikpys}{article}{
			author={Cwikel, M.},
			author={Pustylnik, E.},
			title={Weak type interpolation near ``endpoint'' spaces},
			journal={J. Funct. Anal.},
			volume={171},
			date={2000},
			number={2},
			pages={235--277},
			issn={0022-1236},
			review={\MR{1745635 (2001b:46118)}},
			doi={10.1006/jfan.1999.3502},
		}
		
		\bib{dok}{article}{
			author={Doktorskii, R. Ya.},
			title={Reiterative relations of the real interpolation method},
			language={Russian},
			journal={Dokl. Akad. Nauk SSSR},
			volume={321},
			date={1991},
			number={2},
			pages={241--245},
			issn={0002-3264},
			translation={
				journal={Soviet Math. Dokl.},
				volume={44},
				date={1992},
				number={3},
				pages={665--669},
				issn={0197-6788},},
			review={\MR{1153547 (93b:46143)}},
		}
		
		\bib{gop2009}{article}{
			author={Evans, W. D.},
			author={Gogatishvili, A.},
			author={Opic, B.},
			title={The $\rho$-quasiconcave functions and weighted inequalities},
			conference={
				title={Inequalities and applications},
			},
			book={
				series={Internat. Ser. Numer. Math.},
				volume={157},
				publisher={Birkh\"auser},
				place={Basel},
			},
			date={2009},
			pages={121--132},
			review={\MR{2758974 (2012a:26025)}},
		}
		
		\bib{edop}{article}{
			author={Edmunds, D. E.},
			author={Opic, B.},
			title={Boundedness of fractional maximal operators between classical and weak-type Lorentz spaces},
			journal={Dissertationes Math. (Rozprawy Mat.)},
			volume={410},
			date={2002},
			pages={50},
			issn={0012-3862},
			review={\MR{1952673 (2004c:42040)}},
			doi={10.4064/dm410-0-1},
		}
		
		\bib{evop}{article}{
			author={Evans, W. D.},
			author={Opic, B.},
			title={Real interpolation with logarithmic functors and reiteration},
			journal={Canad. J. Math.},
			volume={52},
			date={2000},
			number={5},
			pages={920--960},
			issn={0008-414X},
			review={\MR{1782334 (2001i:46115)}},
			doi={10.4153/CJM-2000-039-2},
		}
		
		\bib{F2000}{article}{
			author={Fiorenza, A.},
			title={Duality and reflexivity in grand Lebesgue spaces},
			journal={Collect. Math.},
			volume={51},
			date={2000},
			number={2},
			pages={131--148},
			issn={0010-0757},
			review={\MR{1776829}},
		}
		
		\bib{FK}{article}{
			author={Fiorenza, A.},
			author={Karadzhov, G. E.},
			title={Grand and small Lebesgue spaces and their analogs},
			journal={Z. Anal. Anwendungen},
			volume={23},
			date={2004},
			number={4},
			pages={657--681},
			issn={0232-2064},
			review={\MR{2110397}},
			doi={10.4171/ZAA/1215},
		}
		\bib{FR2008}{article}{
			author={Fiorenza, A.},
			author={Rakotoson, J. M.},
			title={Some estimates in $G\Gamma(p,m,w)$ spaces},
			journal={J. Math. Anal. Appl.},
			volume={340},
			date={2008},
			number={2},
			pages={793--805},
			issn={0022-247X},
			review={\MR{2390887}},
			doi={10.1016/j.jmaa.2007.09.013},
		}
		
		\bib{FRZ2009}{article}{
			author={Fiorenza, A.},
			author={Rakotoson, J. M.},
			author={Zitouni, L.},
			title={Relative rearrangement method for estimating dual norms},
			journal={Indiana Univ. Math. J.},
			volume={58},
			date={2009},
			number={3},
			pages={1127--1149},
			issn={0022-2518},
			review={\MR{2541361}},
			doi={10.1512/iumj.2009.58.3580},
		}
		
		\bib{GR}{book}{
			author={Garcia-Cuerva, J.},
			author={Rubio de Francia, J.L.},
			title={Weighted norm inequalities and related topics},
			series={North-Holland Mathematics Studies},
			volume={116},
			note={Notas de Matem\'atica [Mathematical Notes], 104},
			publisher={North-Holland Publishing Co.},
			place={Amsterdam},
			date={1985},
			pages={x+604},
		}
		
		\bib{g1}{article}{
			author={Gogatishvili, A.},
			title={Discretization and anti-discretization of function spaces},
			series={},
			edition={},
			journal={In the proceedings of the The Autumn Conference Mathematical Society
				of Japan, September 25--28, Shimane University, Matsue (2002)},
			pages={63--72},
		}
		
		\bib{gjop}{article}{
			author={Gogatishvili, A.},
			author={Johansson, M.},
			author={Okpoti, C. A.},
			author={Persson, L.-E.},
			title={Characterisation of embeddings in Lorentz spaces},
			journal={Bull. Austral. Math. Soc.},
			volume={76},
			date={2007},
			number={1},
			pages={69--92},
			issn={0004-9727},
			review={\MR{2343440}},
			doi={10.1017/S0004972700039484},
		}
		
		\bib{GKPS}{article}{
			author={Gogatishvili, A.},
			author={K\v{r}epela, M.},
			author={Pick, L.},
			author={Soudsk\'{y}, F.},
			title={Embeddings of Lorentz-type spaces involving weighted integral
				means},
			journal={J. Funct. Anal.},
			volume={273},
			date={2017},
			number={9},
			pages={2939--2980},
			issn={0022-1236},
			review={\MR{3692326}},
			doi={10.1016/j.jfa.2017.06.008},
		}	
		
		\bib{GogMusISI}{article}{
			author={Gogatishvili, A.},
			author={Mustafayev, R.Ch.},
			title={Iterated Hardy-type inequalities involving suprema},
			journal={Math. Inequal. Appl.},
			volume={20},
			date={2017},
			number={4},
			pages={901--927},
			issn={1331-4343},
			review={\MR{3711402}},
		}
		
		\bib{GMP1}{article}{
			author={Gogatishvili, A.},
			author={Mustafayev, R. Ch.},
			author={Persson, L.-E.},
			title={Some new iterated Hardy-type inequalities},
			journal={J. Funct. Spaces Appl.},
			date={2012},
			pages={Art. ID 734194, 30},
			issn={0972-6802},
			review={\MR{3000818}},
			doi={10.1155/2012/734194},
		}
		
		\bib{GMP2}{article}{
			author={Gogatishvili, A.},
			author={Mustafayev, R. Ch.},
			author={Persson, L.-E.},
			title={Some new iterated Hardy-type inequalities: the case $\theta=1$},
			journal={J. Inequal. Appl.},
			date={2013},
			pages={2013:515, 29},
			review={\MR{3320124}},
		}
		
		\bib{gog.mus.2017_1}{article}{
			author={Gogatishvili, A.},
			author={Mustafayev, R. Ch.},
			title={Weighted iterated Hardy-type inequalities},
			journal={Math. Inequal. Appl.},
			volume={20},
			date={2017},
			number={3},
			pages={683--728},
			issn={1331-4343},
			review={\MR{3653914}},
		}
		
		\bib{gog.mus.2017_2}{article}{
			author={Gogatishvili, A.},
			author={Mustafayev, R. Ch.},
			title={Iterated Hardy-type inequalities involving suprema},
			journal={Math. Inequal. Appl.},
			volume={20},
			date={2017},
			number={4},
			pages={901--927},
			issn={},
			review={},
		}
		
		\bib{gogmusunv}{article}{
			author = {Gogatishvili, A.}
			author = {Mustafayev, R. Ch.}
			author = {Unver, T.},
			year = {2019},
			month = {12},
			pages = {1303-1328},
			title = {Pointwise multipliers between weighted copson and cesàro function spaces},
			volume = {69},
			journal = {Mathematica Slovaca},
			doi = {10.1515/ms-2017-0310}
		}	
		
		\bib{gop}{article}{
			author={Gogatishvili, A.},
			author={Opic, B.},
			author={Pick, L.},
			title={Weighted inequalities for Hardy-type operators involving suprema},
			journal={Collect. Math.},
			volume={57},
			date={2006},
			number={3},
			pages={227--255},
			issn={0010-0757},
			review={\MR{2264321 (2007g:26019)}},
		}
		
		\bib{gogpick2007}{article}{
			author={Gogatishvili, A.},
			author={Pick, L.},
			title={A reduction theorem for supremum operators},
			journal={J. Comput. Appl. Math.},
			volume={208},
			date={2007},
			number={1},
			pages={270--279},
			issn={0377-0427},
			review={\MR{2347749 (2009a:26013)}},
		}
		
		\bib{GPS}{article}{
			author={Gogatishvili, A.},
			author={Pick, L.},
			author={Soudsk\'y, F.},
			title={Characterization of associate spaces of weighted Lorentz spaces
				with applications},
			journal={Studia Math.},
			volume={224},
			date={2014},
			number={1},
			pages={1--23},
			issn={0039-3223},
			review={\MR{3277050}},
			doi={10.4064/sm224-1-1},
		}
		
		\bib{graf2008}{book}{
			author={Grafakos, L.},
			title={Classical Fourier analysis},
			series={Graduate Texts in Mathematics},
			volume={249},
			edition={2},
			publisher={Springer, New York},
			date={2008},
			pages={xvi+489},
			isbn={978-0-387-09431-1},
			review={\MR{2445437 (2011c:42001)}},
		}
		
		\bib{graf}{book}{
			author={Grafakos, L.},
			title={Modern Fourier analysis},
			series={Graduate Texts in Mathematics},
			volume={250},
			edition={2},
			publisher={Springer},
			place={New York},
			date={2009},
			pages={xvi+504},
			isbn={978-0-387-09433-5},
			review={\MR{2463316 (2011d:42001)}},
		}
		
		\bib{guz1975}{book}{
			author={de Guzm{\'a}n, M.},
			title={Differentiation of integrals in $R^{n}$},
			series={Lecture Notes in Mathematics, Vol. 481},
			note={With appendices by Antonio C\'ordoba, and Robert Fefferman, and two
				by Roberto Moriy\'on},
			publisher={Springer-Verlag, Berlin-New York},
			date={1975},
			pages={xii+266},
		}
		
		\bib{IS}{article}{
			author={Iwaniec, T.},
			author={Sbordone, C.},
			title={On the integrability of the Jacobian under minimal hypotheses},
			journal={Arch. Rational Mech. Anal.},
			volume={119},
			date={1992},
			number={2},
			pages={129--143},
			issn={0003-9527},
			review={\MR{1176362}},
			doi={10.1007/BF00375119},
		}
		
		\bib{kerp}{article}{
			author={Kerman, R.},
			author={Pick, L.},
			title={Optimal Sobolev imbeddings},
			journal={Forum Math.},
			volume={18},
			date={2006},
			number={4},
			pages={535--570},
			issn={0933-7741},
			review={\MR{2254384 (2007g:46052)}},
			doi={10.1515/FORUM.2006.028},
		}
		
		\bib{krep2016}{article}{
			author={K\v repela, M.},
			title={Integral conditions for Hardy-type operators involving suprema},
			journal={Collect. Math.},
			volume={68},
			date={2017},
			number={1},
			pages={21--50},
			issn={0010-0757},
			doi={10.1007/s13348-016-0170-6},
		}
		
		\bib{krepick}{article}{
			author={K\v{r}epela, M.},
			author={Pick, L.},
			title={Weighted inequalities for iterated Copson integral operators},
			journal={Studia Math.},
			volume={253},
			date={2020},
			number={2},
			pages={163--197},
			issn={0039-3223},
			review={\MR{4078221}},
			doi={10.4064/sm181016-5-5},
		}
	    
	    \bib{kufmalpers}{book}{
	    	author={Kufner, A.},
	    	author={Maligranda, L.},
	    	author={Persson, L.-E.},
	    	title={The Hardy inequality},
	    	note={About its history and some related results},
	    	publisher={Vydavatelsk\'{y} Servis, Plze\v{n}},
	    	date={2007},
	    	pages={162},
	    	isbn={978-80-86843-15-5},
	    	review={\MR{2351524}},
	    }
    
	    \bib{kp}{book}{
	    	author={Kufner, A.},
	    	author={Persson, L.-E.},
	    	title={Weighted inequalities of Hardy type},
	    	publisher={World Scientific Publishing Co., Inc., River Edge, NJ},
	    	date={2003},
	    	pages={xviii+357},
	    	isbn={981-238-195-3},
	    	review={\MR{1982932}},
	    	doi={10.1142/5129},
	    }
    
        \bib{kufperssam}{book}{
        	author={Kufner, A.},
        	author={Persson, L.-E.},
        	author={Samko, N.},
        	title={Weighted inequalities of Hardy type},
        	edition={2},
        	publisher={World Scientific Publishing Co. Pte. Ltd., Hackensack, NJ},
        	date={2017},
        	pages={xx+459},
        	isbn={978-981-3140-64-6},
        	review={\MR{3676556}},
        	doi={10.1142/10052},
        }
				
		\bib{leckneug}{article}{
			author={Leckband, M. A.},
			author={Neugebauer, C. J.},
			title={Weighted iterates and variants of the Hardy-Littlewood maximal
				operator},
			journal={Trans. Amer. Math. Soc.},
			volume={279},
			date={1983},
			number={1},
			pages={51--61},
			issn={0002-9947},
			review={\MR{704601 (85c:42021)}},
			doi={10.2307/1999370},
		}
		
		\bib{ler2005}{article}{
			author={Lerner, A. K.},
			title={A new approach to rearrangements of maximal operators},
			journal={Bull. London Math. Soc.},
			volume={37},
			date={2005},
			number={5},
			pages={771--777},
			issn={0024-6093},
			review={\MR{2164840 (2006d:42032)}},
			doi={10.1112/S0024609305004698},
		}
		
		\bib{mus.2017}{article}{
			author={Mustafayev, R. Ch.},
			title={On weighted iterated Hardy-type inequalities},
			journal={Positivity},
			volume={22},
			date={2018},
			number={},
			pages={275--299},
			issn={},
			review={},
		}
		
		\bib{musbil}{article}{
			author={Mustafayev, R. Ch.},
			author={Bilgi\c{c}li, N.},
			title={Generalized fractional maximal functions in Lorentz spaces
				$\Lambda$},
			journal={J. Math. Inequal.},
			volume={12},
			date={2018},
			number={3},
			pages={827--851},
			issn={1846-579X},
			review={\MR{3857365}},
			doi={10.7153/jmi-2018-12-62},
		}
		
		\bib{musbil_2}{article}{
			author={Mustafayev, R. Ch.},
			author={Bilgi\c{c}li, N.},
			title={Boundedness of weighted iterated Hardy-type operators involving
				suprema from weighted Lebesgue spaces into weighted Ces\`aro function
				spaces},
			journal={Real Anal. Exchange},
			volume={45},
			date={2020},
			number={2},
			pages={339--374},
			issn={0147-1937},
			review={\MR{4196080}},
		}
	
	    \bib{musbilyil}{article}{
	    	author={Mustafayev, R. Ch.},
	    	author={Bilgi\c{c}li, N.},
	    	author={Y\i lmaz, M.},
	    	title={On some restricted inequalities for the iterated Hardy-type operator involving suprema and their applications},
	    	journal={Preprint arXiv:2109.06745},
	    	volume={},
	    	date={},
	    	number={},
	    	pages={},
	    }
		
		\bib{neug1987}{article}{
			author={Neugebauer, C. J.},
			title={Iterations of Hardy-Littlewood maximal functions},
			journal={Proc. Amer. Math. Soc.},
			volume={101},
			date={1987},
			number={2},
			pages={272--276},
			issn={0002-9939},
			review={\MR{902540 (88k:42014)}},
			doi={10.2307/2045994},
		}
		
		\bib{ok}{book}{
			author={Opic, B.},
			author={Kufner, A.},
			title={Hardy-type inequalities},
			series={Pitman Research Notes in Mathematics Series},
			volume={219},
			publisher={Longman Scientific \& Technical, Harlow},
			date={1990},
			pages={xii+333},
			isbn={0-582-05198-3},
			review={\MR{1069756}},
		}
		
		\bib{OT1}{article}{
			author={Opic, B.},
			author={Trebels, W.},
			title={Bessel potentials with logarithmic components and Sobolev-type
				embeddings},
			language={English, with English and Russian summaries},
			journal={Anal. Math.},
			volume={26},
			date={2000},
			number={4},
			pages={299--319},
			issn={0133-3852},
			review={\MR{1805506 (2002b:46057)}},
			doi={10.1023/A:1005691512014},
		}
		
		\bib{OT2}{article}{
			author={Opic, B.},
			author={Trebels, W.},
			title={Sharp embeddings of Bessel potential spaces with logarithmic
				smoothness},
			journal={Math. Proc. Cambridge Philos. Soc.},
			volume={134},
			date={2003},
			number={2},
			pages={347--384},
			issn={0305-0041},
			review={\MR{1972143 (2004c:46057)}},
			doi={10.1017/S0305004102006321},
		}
		
		\bib{pick2000}{article}{
			author={Pick, L.},
			title={Supremum operators and optimal Sobolev inequalities},
			conference={
				title={Function spaces, differential operators and nonlinear analysis
				},
				address={Pudasj\"arvi},
				date={1999},
			},
			book={
				publisher={Acad. Sci. Czech Repub., Prague},
			},
			date={2000},
			pages={207--219},
			review={\MR{1755311 (2000m:46075)}},
		}
		
		\bib{perez1995}{article}{
			author={P{\'e}rez, C.},
			title={On sufficient conditions for the boundedness of the
				Hardy-Littlewood maximal operator between weighted $L^p$-spaces with
				different weights},
			journal={Proc. London Math. Soc. (3)},
			volume={71},
			date={1995},
			number={1},
			pages={135--157},
			issn={0024-6115},
			review={\MR{1327936 (96k:42023)}},
			doi={10.1112/plms/s3-71.1.135},
		}
		
		\bib{pick2002}{article}{
			author={Pick, L.},
			title={Optimal Sobolev embeddings---old and new},
			conference={
				title={Function spaces, interpolation theory and related topics (Lund,
					2000)},},
			book={
			publisher={de Gruyter, Berlin},},
			date={2002},
			pages={403--411},
			review={\MR{1943297 (2003j:46054)}},
		}
		
		\bib{PS_Proc_2013}{article}{
			author={Prokhorov, D. V.},
			author={Stepanov, V. D.},
			title={On weighted Hardy inequalities in mixed norms},
			journal={Proc. Steklov Inst. Math.},
			volume={283},
			date={2013},
			number={},
			pages={149--164},
			issn={},
		}
		
		\bib{PS_Dokl_2013}{article}{
			author={Prokhorov, D. V.},
			author={Stepanov, V. D.},
			title={Weighted estimates for a class of sublinear operators},
			language={Russian},
			journal={Dokl. Akad. Nauk},
			volume={453},
			date={2013},
			number={5},
			pages={486--488},
			issn={0869-5652},
			translation={
				journal={Dokl. Math.},
				volume={88},
				date={2013},
				number={3},
				pages={721--723},
				issn={1064-5624},
			},
			review={\MR{3203323}},
		}
		
		\bib{PS_Dokl_2014}{article}{
			author={Prokhorov, D. V.},
			author={Stepanov, V. D.},
			title={Estimates for a class of sublinear integral operators},
			language={Russian},
			journal={Dokl. Akad. Nauk},
			volume={456},
			date={2014},
			number={6},
			pages={645--649},
			issn={0869-5652},
			translation={
				journal={Dokl. Math.},
				volume={89},
				date={2014},
				number={3},
				pages={372--377},
				issn={1064-5624},},
			review={\MR{3287911}},
		}
		
		\bib{P_Dokl_2015}{article}{
			author={Prokhorov, D. V.},
			title={On the boundedness of a class of sublinear integral operators},
			language={Russian},
			journal={Dokl. Akad. Nauk},
			volume={92},
			date={2015},
			number={2},
			pages={602--605},
			issn={},
		}
		
		\bib{pys}{article}{
			author={Pustylnik, E.},
			title={Optimal interpolation in spaces of Lorentz-Zygmund type},
			journal={J. Anal. Math.},
			volume={79},
			date={1999},
			pages={113--157},
			issn={0021-7670},
			review={\MR{1749309 (2001a:46028)}},
			doi={10.1007/BF02788238},
		}
		
		\bib{Sham}{article}{
			author={Shambilova, G. {\`E}.},
			title={Weighted inequalities for a class of quasilinear integral
				operators on the cone of monotone functions},
			language={Russian, with Russian summary},
			journal={Sibirsk. Mat. Zh.},
			volume={55},
			date={2014},
			number={4},
			pages={912--936},
			issn={0037-4474},
			translation={
				journal={Sib. Math. J.},
				volume={55},
				date={2014},
				number={4},
				pages={745--767},
				issn={0037-4466},},
			review={\MR{3242605}},
		}
		
		\bib{Sinn}{article}{
			author={Sinnamon, G.},
			title={Transferring monotonicity in weighted norm inequalities},
			journal={Collect. Math.},
			volume={54},
			date={2003},
			number={2},
			pages={181--216},
			issn={0010-0757},
			review={\MR{1995140 (2004m:26031)}},
		}
		
		\bib{ss}{article}{
			author={Sinnamon, G.},
			author={Stepanov, V. D.},
			title={The weighted Hardy inequality: new proofs and the case $p=1$},
			journal={J. London Math. Soc. (2)},
			volume={54},
			date={1996},
			number={1},
			pages={89--101},
			issn={0024-6107},
			review={\MR{1395069 (97e:26021)}},
			doi={10.1112/jlms/54.1.89},
		}
		
		\bib{stein1981}{article}{
			author={Stein, E. M.},
			title={Editor's note: the differentiability of functions in ${\bf R}^{n}$},
			journal={Ann. of Math. (2)},
			volume={113},
			date={1981},
			number={2},
			pages={383--385},
			issn={0003-486X},
			review={\MR{607898 (84j:35077)}},
		}
		
		\bib{stein1970}{book}{
			author={Stein, E.M.},
			title={Singular integrals and differentiability properties of functions},
			series={Princeton Mathematical Series, No. 30},
			publisher={Princeton University Press, Princeton, N.J.},
			date={1970},
			pages={xiv+290},
			review={\MR{0290095 (44 \#7280)}},
		}
		
		\bib{stein1993}{book}{
			author={Stein, E.M.},
			title={Harmonic analysis: real-variable methods, orthogonality, and
				oscillatory integrals},
			series={Princeton Mathematical Series},
			volume={43},
			note={With the assistance of Timothy S. Murphy; Monographs in Harmonic Analysis, III},
			publisher={Princeton University Press, Princeton, NJ},
			date={1993},
			pages={xiv+695},
			isbn={0-691-03216-5},
			review={\MR{1232192 (95c:42002)}},
		}
		
		\bib{StepSham}{article}{
			author={Stepanov, V. D.},
			author={Shambilova, G. {\`E}.},
			title={Weight boundedness of a class of quasilinear operators on the cone of monotone functions},
			journal={Dokl. Math.},
			volume={90},
			date={2014},
			number={2},
			pages={569--572},
			issn={},
		}
		
		\bib{tor1986}{book}{
			author={Torchinsky, A.},
			title={Real-variable methods in harmonic analysis},
			series={Pure and Applied Mathematics},
			volume={123},
			publisher={Academic Press, Inc., Orlando, FL},
			date={1986},
			pages={xii+462},
			isbn={0-12-695460-7},
			isbn={0-12-695461-5},
			review={\MR{869816 (88e:42001)}},
		}
	\end{biblist}
\end{bibdiv}
	
\end{document}